\title{Optimal approximation and Kolmogorov widths estimates
for certain singular classes related to equations of mathematical physics}
\author{Ilya V. Boykov
        \thanks{Department of High and Applied Mathematics, Penza State Univertsity, 
                40, Krasnaya, 440026, Penza, Russia. boikov@pnzgu.ru}
       }
\begin{document}
\newenvironment{rmk}{{\em Remark.}}{}
\maketitle

\begin{abstract}
{Solutions of numerous equations of mathematical physics such as elliptic, weakly singular,
singular, hypersingular integral equations belong to functional classes
$\bar Q^u_{r \gamma}(\Omega,1)$ and $Q^u_{r \gamma}(\Omega,1)$ 
defined over $l-$dimensional hypercube $\Omega=[-1,1]^l, l=1,2,\dots.$ The derivatives of
 classes' representatives grow indefinitely
when the argument approaches the boundary $\delta \Omega$.
In this paper we estimate the Kolmogorov and Babenko widths of two functional classes
$\bar Q^u_{r \gamma}(\Omega,1)$ and $Q^u_{r \gamma}(\Omega,1).$
We construct local splines belonging to those classes,
 such that the errors of approximation are
 of the same order as that of the estimated widths. 
 Thus we construct optimal with respect to order methods for
approximating the functional classes $\bar Q^u_{r \gamma}(\Omega,1)$ and $Q^u_{r \gamma}(\Omega,1).$}
 One can use these results for constructing  methods optimal with respect to  order for approximating a unit ball
 of the Sobolev spaces with logarithmic and polynomial weights.
\end{abstract}
\begin{keywords}
{Kolmogorov widths; Babenko widths; optimal approximation; splines.}
\end{keywords}


\section{Introduction}
\label{sec;introduction}

Let $B$ be a Banach space, $X \subset B$ be a compact set, and $\Pi: X \to
R_n$ be a mapping of $X \subset B$ onto a finite-dimensional space $R_n.$

\begin{definition}
 \rm\cite{[1]}. 
Let $L^n$ be  $n$-dimensional subspaces of the
linear space $B.$ The Kolmogorov width $d_n(X,B)$ is defined by
\begin{equation}
d_n(X,B) = \inf\limits_{L^n}\sup\limits_{x \in X}\inf\limits_{u \in L^n}
\|x-u\|,
\label{(1.1)}
\end{equation}
where the outer infimum is calculated over all $n-$dimensional subspaces of
$L^n.$ 
\label{Definition 1.1}
\end{definition}

\begin{definition} \rm\citep{[2],[3]}. The Babenko width $\delta_n(X)$ is defined by
\label{Definition 1.2} 
$
\delta_n(X) = \inf\limits_{\Pi:X \to R^n}\sup\limits_{x \in X}
\rm{diam}\Pi^{-1} \Pi(x),
$
where the infimum is calculated over all continuous mappings $\Pi:X \to R^n.$
\end{definition}

If the infimum in (\ref{(1.1)}) is attained for some $L^n,$ this subspace is called an
extremal subspace.

The widths evaluation for various spaces of functions play an
important role in numerical analysis and approximation theory since this problem is
 closely related to many optimality problems such as $\epsilon-$ complexity of
integration and approximation, optimal differentiation, and optimal
approximation of solutions of operator equations.

For a detailed study of these problems in view of the general theory of optimal algorithms
we refer to \citet{[4]}.  

\citet{[5]} formulated the problem of evaluating the widths $d_n(X,B),$   the discovery of extremal
subspaces of $L^n$.  \citet{[5]} also evaluated $d_n(X,B)$  for certain compact sets $X$.
Kolmogorov asserted to determine  the exact value of $d_n(X,B)$
 because it might lead to the discovery of extremal subspaces,
and therefore to new and better methods of approximation.  \citet{[2],[3]}
 promoted using extremal subspaces of compacts $X$ in constructing numerical methods
in physics and mechanics.

 The most general results were obtained in estimating the Kolmogorov widths in  Sobolev spaces $W^r_p$
on unit balls $B(W^r_p)$.
\citet{[6]} estimated the widths $d_n(B(W^r_1),L_2)$ and $d_n(B(W^r_\infty),L_\infty)$. 
\citet{[7]} obtained the exact values of $d_n(B(W^r_1),C)$. The widths
$d_n(B(W^r_p),L_q)$ for various $p$ and $q$ were studied 
by several authors, e.g. \citet{[11]}, \citet{[9],[10]}, \citet{[12]},\citet{[8]},\citet{[13]}. 
\citet{[12]} and \citet{[13]} obtained the final 
estimates of  $d_n(B(W^r_p),L_q)$ for $1 \le p \le \infty$ and $1 \le q \le \infty$.  
The widths of 
various classes of multivariable functions were analyzed by several scientists,
e.g.,\citet{[2]}, \citet{[3],[14]}, \citet{[19]}, \citet{[15]}, \citet{[17]}, \citet{[18]}, \citet{[16]},
 where the books \citet{[2]}, \citet{[19]}, \citet{[1]},\citet{[18]}, \citet{[16]},
and the articles \citet{[3],[17]}
may serve as reviews.

Solutions of numerous problems of  analysis, mechanics, electrodynamics,
and geophysics lead to the necessity to develop optimal methods for approximating 
special classes of functions. The classes $Q_{r\gamma}(\Omega,1)$ consist of 
functions having bounded derivatives up to  order $r$
in a closed domain $\Omega$ and higher order derivatives in $\Omega \backslash \delta\Omega,$
whose modulus increases unboundedly in a neighbourhood of the boundary $\partial\Omega$ 
(see Definitions \ref{Definition 1.3} -- \ref{Definition 1.7}).
The classes $Q_{r\gamma}(\Omega,1)$ describe solutions of elliptic equations 
\citet{[3]}, 
weakly singular, singular, and hypersingular integral equations \citet{[20]}. 

The functions representable by singular and hypersingular integrals with moving singularities 
$
\int\limits_{-1}^1\frac{\varphi (\tau)}{\tau-t}d\tau, \quad t\in (-1,1); \quad \quad
\int\limits_{-1}^1\frac{\varphi (\tau)}{(\tau-t)^p}d\tau, \quad t\in [-1,1], p=2,3,\cdots; 
$\\
$\int\limits_{-1}^1\int\limits_{-1}^1\frac{\varphi (\tau_1,\tau_2)}
{((\tau_1-t_1)^2+(\tau_2-t_2)^2)^{p/2}}d\tau_1d\tau_2, \quad t_1,t_2\in [-1,1], p=3,4,\cdots
$
also belong to $Q_{r\gamma}(\Omega,1)$ \citep[see][]{[21],[22]}.

 Apparently \citet{[3]}  defined the class of functions $Q_r(\Omega,1)$ 
to emphasize its role in construction approximation in numerous important problems in mathematical physics.  

The relationship between the functional class $Q_r(\Omega,1)$ (as well as 
$Q_{r\gamma}(\Omega,1)$) and compacts in the weighted Sobolev space 
$W^r_\infty (\Omega,1,\rho)$ follows from the definition of the classes.

Let $ \Omega = [-1,1]^l, l=1,2,\ldots,$ $t=(t_1, \ldots, t_l), r=(r_1, \ldots, r_l),$  
$r_i, i=1,\ldots,l,$ be  integers. Let
$\rho=d(t,\Gamma)$ be the $l_{\infty}$-distance between a point $t$ and the boundary
$\delta \Omega.$ 

The class $W_{\infty}^r(\Omega,1,\rho)$ consists of  functions $f \in C(\Omega),$ which have bounded 
partial derivatives of orders $i=0,1,\ldots,r$ in $\Omega$  and partial derivatives 
of orders $i=r+1,\ldots,2r+1$ in $\Omega \backslash \delta\Omega$ with the norm 
$\|f\| = \|f\|_{L_{\infty}(\Omega)}+  \\ \sum_{i=1}^{r}\sum_{i=i_1+ \cdots + i_l}
\|\partial^i
f  / \partial t^{i_1}_1  \cdots
\partial t^{i_l}_l \|_{L_{\infty}(\Omega)} + \\
\sum_{i=r+1}^{2r+1}\sum_{i=i_1+ \cdots + i_l}
\| (\rho(t))^{i-r}
\partial^i f  / \partial t^{i_1}_1  \cdots
\partial t^{i_l}_l \|_{L_{\infty}(\Omega)}\le 1, $ 
where $i_j$ are nonnegative integers, $0\le i_j \le i, j=1,\cdots,l,  i=
i_1+\cdots+i_l.$

Similarly one can define the classes of functions $W_{\infty}^{r\gamma}(\Omega,1,\rho),$  
$W_{\infty}^{r\gamma u}(\Omega,1,\rho),$ and
$\bar W_{\infty}^{r\gamma u}(\Omega,1,\rho)$
which are counterparts of the classes  
$Q_{r\gamma}(\Omega,1),$  $Q_{r\gamma}^u(\Omega,1),$ and $\bar Q_{r\gamma}^u(\Omega,1).$

The results of this paper can be extended to the classes 
$W_{\infty}^{r}(\Omega,1,\rho),$  $W_{\infty}^{r\gamma}(\Omega,1,\rho),$  
$W_{\infty}^{r\gamma u}(\Omega,1,\rho),$ and
$\bar W_{\infty}^{r\gamma u}(\Omega,1,\rho).$

The widths  estimates for the sets of functions $B(W_{\infty}^{r}(\Omega,1,\rho)),$  \ 
$B(W_{\infty}^{r\gamma}(\Omega,1,\rho)),$ \ \  
$B(W_{\infty}^{r\gamma u}(\Omega,1,\rho)),$ and
$B(\bar W_{\infty}^{r\gamma}(\Omega,1,\rho))$ are of interest since they play an important role in 
various applied problems, for example, problems of hydrodynamics. The author intends to use the
obtained results
in his further works in constructing optimal numerical methods for solving some problems of mathematical physics.

\section{Definitions of the function classes and previous results}
\label{sec;sec1}

\citet{[3]} defined the class $Q_r(\Omega,1)$ (Definition \ref{Definition 1.3}) and
 declared the problem of estimating  the Kolmogorov and Babenko widths of $Q_r(\Omega,1)$
to be one of the most important problems in the numerical analysis.
Later on this problem was solved by the author \citep[see][]{[23],[24]}.

The classes $Q_{r\gamma}(\Omega,1),$ $Q_{r\gamma p}(\Omega,1),$ and $B_{r\gamma}(\Omega,1)$ generalize the class \\ $Q_r(\Omega,1).$
\citet{[24],[19]} estimated  the Kolmogorov and Babenko widths 
and constructed local splines for approximation of functions from
$Q_{r\gamma}(\Omega,1),$  $Q_{r\gamma p}(\Omega,1),$ and $B_{r\gamma}(\Omega,1)$. 
The error of approximation obtained by local splines has the same order
as that of the corresponding values of the Kolmogorov and Babenko widths.
Below we list the definitions of the functional classes
$Q_{r\gamma}(\Omega,1),$ $Q_{r\gamma p}(\Omega,1),$ 
$Q^u_{r\gamma}(\Omega,1),$ and $\bar Q^u_{r\gamma}(\Omega,1).$

Let $\Omega = [-1,1]^l,$ $l\ge 1,$ $ \Gamma =\partial \Omega $ be the boundary of
$\Omega,$  and $u, r$ be positive integers. Let $t=(t_1,\ldots,t_l),$
$v=(v_1,\ldots,v_l),$  $|v| =v_1 + \cdots + v_l,$ $D^v=\partial^{|v|}/\partial t_1^{v_1}.\cdots.\partial t_l^{v_l}$
and $v_i$ be nonnegative
integers, $i=1, 2, \ldots,l.$

\begin{definition} \rm\citep{[3]}.
\label{Definition 1.3}  Let $\Omega = [-1,1]^l,$  $l=1, 2, \ldots.$
The class $Q_r(\Omega,1)$ consists of  functions $f\in C^r( \Omega)$ satisfying
$
\max\limits_{t \in \Omega}\left|D^vf(t)\right| \leq 1, \quad 0 \leq |v| \leq r,
$
$\left|D^vf(t)\right| \leq
(d(t,\Gamma))^{-(|v|-r)}, \quad t \in \Omega\setminus\Gamma,
\quad r < |v| \leq 2r+1,
$
where $d(t,\Gamma)$ is the $l_{\infty}$-distance between a point $t$ and
$\Gamma.$
\end{definition}

\begin{definition} \rm\citep{[24],[19]}.
\label{Definition 1.4} Let $\Omega = [-1,1]^l,$ $l=1, 2, \ldots.$
The class $Q_{r\gamma}(\Omega,1)$ consists of  functions $f\in C^r(\Omega)$
satisfying
$ 
\max\limits_{t \in \Omega}\left|D^vf(t)\right| \leq 1, \quad 0 \leq |v| \leq r,
$
$
\left|D^vf(t)\right| \leq
(d(t,\Gamma))^{-(|v|-r-\zeta)}, \quad  t \in \Omega\setminus\Gamma,
\quad r < |v| \leq s.
$
Note, $s=r+\lceil\gamma\rceil,$ $\zeta = \lceil\gamma\rceil-\gamma.$
\end{definition}

\begin{definition} \rm\citep{[24],[19]}.
\label{Definition 1.5} Let $\Omega = [-1,1]^l,$  $l=1, 2, \ldots.$
The class $Q_{r\gamma p}(\Omega,1)$ consists of  functions $f\in C^r(\Omega)$ 
 satisfying
$
\max\limits_{t \in \Omega} \left| D^vf(t) \right|  \leq 1, \quad 0 \leq |v| \leq r,
$
$
\int\limits_\Omega \left|(d(t,\Gamma))^{\gamma} D^vf(t) \right|^p dt \leq 1,
\quad r < |v| \leq s,
$
where $1\le p<\infty,$ $v=v_1+\cdots+v_l,$ $0 \le v_i \le s,$ $i=1,2,\ldots,l,$
 $s=r+\lceil\gamma\rceil$, $\zeta=\lceil\gamma\rceil-\gamma.$  
\end{definition}

\begin{definition}
\label{Definition 1.6} \rm Let $ \Omega = [-1,1]^l, l=1,2,\ldots.$ 
Let  $\gamma $ and $u$ be positive integers.
The class
$\bar Q^u_{r\gamma}(\Omega,1)$ consists of  functions $f \in C^{r-1}(\Omega)$
satisfying\\
$
\max\limits_{t \in \Omega}|D^vf(t) | \leq 1, \quad 0 \leq |v| \leq r-1,
$\\
$
|D^vf(t)| \leq (1+|\ln^u d(t,\Gamma)|), \quad   t \in \Omega\setminus\Gamma,
\quad |v| = r,
$\\
$
|D^vf(t)| \leq (1+|\ln^{u-1} d(t,\Gamma)|)/
(d(t,\Gamma))^{|v|-r}, \quad    t \in \Omega\setminus\Gamma,
\quad r < |v| \leq s,
$
where $s = r+\gamma.$
\end{definition}

\begin{definition}
\label{Definition 1.7} \rm Let $\Omega=[-1,1]^l, l=1,2,\ldots.$ Let $u$ be a positive integer, and
$\gamma$ be  a non-integer.
The class $Q^u_{r\gamma}(\Omega,1)$ consists of  functions $f\in C^r(\Omega)$
satisfying
$
\max_{t\in\Omega}|D^vf(t)|\leq 1,\ \ 0 \leq |v|\leq r,
$
$
|D^vf(t) |\leq (1+|\ln^ud(t,\Gamma)|)/(d(t,\Gamma))^{|v|-r-\zeta}
, \quad \ r<|v|\leq s, \quad  t \in \Omega\setminus\Gamma,
$
where $s=r+\lceil\gamma\rceil,\ \zeta=\lceil\gamma\rceil-\gamma.$
\end{definition}

\begin{definition}
\label{Definition 1.8} \rm Let $G = [a,b].$ The class $W^r(1),$ $r=1,2,\ldots,$
consists of  functions $f \in C[a,b]$ which have  absolutely
continuous derivatives of orders $j=0,1,\ldots,r-1$ and a
piecewise continuous derivative $f^{(r)}$ satisfying $|f^{(r)}|\leq
1.$
\end{definition}

\begin{definition}
\label{Definition 1.9} \rm Let $G=[a_1,b_1;\cdots;a_l,b_l]=[a_1,b_1]\times\cdots\times[a_l,b_l],$ 
$l=2,3,\ldots.$ The
class $C^r_l(1),$ $r=1,2,\cdots$ consists of  functions $f \in
C[G]$ which have  absolutely continuous partial derivatives of
orders $j=0,1,\ldots,r-1$ and a piecewise continuous partial derivative
of order $r$ satisfying $\max_{t\in G}\left|D^rf(t) \right| \leq 1.$ 
\end{definition}

Now we briefly describe the notations we use throughout
this paper.

Let $f \in W^r(1),$ $t \in [a,b],$ $c \in [a,b].$ 
We denote by $T_{r-1}(f,[a,b],c)$ the Taylor polynomial 
of $f$ of order $r-1$ with respect to the point $c,$ i.e. $ T_{r-1}(f,[a,b],c)= \sum^{r-1}_{j=0}(f^{(j)}(c) / j!) (t-c)^j.$
For $f(t_1,t_2)\in C^{r}_2(1),$ $t=(t_1,t_2)\in G=[a_1,b_1; a_2,b_2],$
$v=(v_1,v_2)\in G$ we can re-write its Taylor polynomial of order $r-1$ as
$T_{r-1}(f,G,v)=\sum^{r-1}_{j=0} d_j(f,v) / j!,$ where $d_j(f,v)$ is a polynomial of order $j$
given by $d_j(f,v)(t) = $\\$\sum^j_{i=0} C_{j}^i (\partial^j
f(v) / \partial t^i_1 \partial t^{j-i}_2) (t_1-v_1)^i
(t_2-v_2)^{j-i}.$ The Taylor polynomials for functions of $l\ge 2$
variables can be re-written similarly.

Many types of differential and integral equations
such as elliptic equations \citep{[3]},
weakly singular integral equations \citep{[29]}, 
singular, and hypersingular integral
equations \citep{[22],[30]}  have solutions that
belong to the functional sets similar to $\bar Q^u_{r \gamma}(\Omega,1),$ and
$Q^u_{r \gamma}(\Omega,1).$ Therefore  evaluating the widhts of 
$\bar Q_{r\gamma}^u(\Omega,1),$ and $Q_{r\gamma}^u(\Omega,1)$
and finding the extremal
subspaces are important problems in numerical analysis.

 The author \citep{[23],[24],[19]} developed 
optimal with respect to order to accuracy methods  for approximating the 
classes $Q_{r \gamma}(\Omega,1)$ and $B_{r \gamma}(\Omega,1)$.
Afterwards, these methods were used in developing (optimal with respect to order 
to accuracy and complexity) approximate methods for solving Fredholm and Volterra weakly singular integral 
equations \citep{[20]}, in estimating an accuracy of elliptic equations solutions \citep{[27]},
and in developing optimal with respect to accuracy cubature rules  for evaluating 
many-dimensional integrals \citep{[28]}. 
Solutions of some classes of weakly singular, singular, and hypersingular integral equations
belong to the classes $\bar Q^u_{r \gamma}(\Omega,1)$ and $Q^u_{r \gamma}(\Omega,1).$

In this paper we  obtain the weak asymptotic estimates of the Kolmogorov and Babenko
widths of the classes $\bar Q_{r\gamma}^u(\Omega,1)$ and $Q_{r\gamma}^u(\Omega,1).$  
We  also construct local splines, which yield the optimal order of 
approximation ( in the sense of the widths $d_n$ and $\delta_n$) 
for functions in $\bar Q_{r\gamma}^u(\Omega,1)$, $Q_{r\gamma}^u(\Omega,1).$

The author  intends to use the optimal methods for approximating the functional classes 
$\bar Q^u_{r \gamma}(\Omega,1)$ and $Q^u_{r \gamma}(\Omega,1)$ 
proposed in this paper for developing optimal methods of solving 
weakly singular, singular, and hypersingular integral equations.

We start with recalling the following well-known assertions.
\begin{lemma} \rm\citep{[1]}.
\label{Lemma 1.1}  Let $D$ be a Hausdorff space, $X \subset C(D).$
There exist $n+1$
points $t_i,$ $i=0,\ldots,n$  and a
number $\epsilon \geq 0$ with the following property:
For each distribution of signs $\lambda_i = \pm 1,$ $i=0,\ldots,n,$
there is a function $f_0 \in X$ such that
$sign \  f_0(t_i) = \lambda_i,$ $|f_0(t_i)| \geq \epsilon,$ $i=0,\ldots,n.$
Then  $d_n(X,C) \geq \epsilon$ in the space $C(D)$.   
\end{lemma}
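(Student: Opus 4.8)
The plan is to reduce the assertion to a finite--dimensional duality argument combined with the prescribed alternation of signs on the $n+1$ points. Since the points $t_0,\dots,t_n$ and the number $\epsilon\ge 0$ with the stated sign property are given, it suffices to prove that every $n$--dimensional subspace $L^n\subset C(D)$ lies at distance at least $\epsilon$ from some single element of $X$; taking the infimum over $L^n$ then yields $d_n(X,C)\ge\epsilon$ directly from Definition \ref{Definition 1.1}.

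First I would fix an arbitrary $n$--dimensional subspace $L^n$ and consider the evaluation map $T:L^n\to R^{n+1}$ given by $T(u)=(u(t_0),\dots,u(t_n))$. This map is linear and $\dim L^n=n<n+1$, so $T(L^n)$ is a proper subspace of $R^{n+1}$; hence there exists a nonzero vector $\mu=(\mu_0,\dots,\mu_n)$ orthogonal to it, i.e. $\sum_{i=0}^{n}\mu_i u(t_i)=0$ for every $u\in L^n$, and necessarily $\sum_{i=0}^{n}|\mu_i|>0$. Next I would invoke the hypothesis with the distribution of signs $\lambda_i=\mathrm{sign}\,\mu_i$ whenever $\mu_i\ne 0$ and $\lambda_i=1$ otherwise: this produces $f_0\in X$ with $\mathrm{sign}\,f_0(t_i)=\lambda_i$ and $|f_0(t_i)|\ge\epsilon$ for all $i$, so that $\mu_i f_0(t_i)=|\mu_i|\,|f_0(t_i)|\ge\epsilon\,|\mu_i|$ for each $i$ (the inequality being trivial when $\mu_i=0$).

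Then, for any $u\in L^n$, using $\sum_i\mu_i u(t_i)=0$ and $\|f_0-u\|_{C(D)}\ge|f_0(t_i)-u(t_i)|$, one gets
\[
\epsilon\sum_{i=0}^{n}|\mu_i|\;\le\;\sum_{i=0}^{n}\mu_i f_0(t_i)\;=\;\sum_{i=0}^{n}\mu_i\bigl(f_0(t_i)-u(t_i)\bigr)\;\le\;\|f_0-u\|_{C(D)}\sum_{i=0}^{n}|\mu_i|.
\]
Dividing by $\sum_{i=0}^{n}|\mu_i|>0$ gives $\|f_0-u\|_{C(D)}\ge\epsilon$ for every $u\in L^n$, whence $\inf_{u\in L^n}\|f_0-u\|\ge\epsilon$ and therefore $\sup_{x\in X}\inf_{u\in L^n}\|x-u\|\ge\epsilon$. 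Since $L^n$ was arbitrary, $d_n(X,C)\ge\epsilon$.

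As for difficulty, there is no genuine obstacle: the argument is a short application of the pigeonhole principle in $R^{n+1}$ together with a sign bookkeeping. The only points requiring a little care are the choice of $\lambda_i$ at indices where $\mu_i$ vanishes (an arbitrary sign there is harmless, since those terms drop out of the sums) and the remark that $\mu\ne 0$ forces $\sum_i|\mu_i|>0$, which legitimizes the final division; the Hausdorff assumption on $D$ is used only to guarantee that the point evaluations $u\mapsto u(t_i)$ are well--defined bounded linear functionals on $C(D)$.
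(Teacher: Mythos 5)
Your argument is correct and complete: the annihilator vector $\mu$ obtained from $\dim T(L^n)\le n<n+1$, the choice of signs $\lambda_i=\mathrm{sign}\,\mu_i$, and the chain $\epsilon\sum_i|\mu_i|\le\sum_i\mu_i(f_0(t_i)-u(t_i))\le\|f_0-u\|_{C(D)}\sum_i|\mu_i|$ together give $\inf_{u\in L^n}\|f_0-u\|\ge\epsilon$ for every $n$-dimensional $L^n$, which is exactly what Definition \ref{Definition 1.1} requires. The paper itself offers no proof of Lemma \ref{Lemma 1.1} --- it is quoted from \citet{[1]} as a known result --- and your argument is precisely the standard duality proof given in that reference, so there is nothing to reconcile; your side remarks about the indices with $\mu_i=0$ and the strict positivity of $\sum_i|\mu_i|$ handle the only delicate points.
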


\begin{lemma} \rm\citep{[2]}.
\label{Lemma 1.2} Let $B$ be a Banach space,  $X \subset B$ be a compact set. The inequality $\delta_n(X) \le 2d_n(X,B)$
is true.
\end{lemma}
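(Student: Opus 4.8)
The plan is to unwind the two definitions and exploit the geometry of a near-optimal $n$-dimensional subspace. Fix $\varepsilon>0$ and choose an $n$-dimensional subspace $L^n \subset B$ that is $\varepsilon$-optimal for the Kolmogorov width, i.e.\ $\sup_{x\in X}\inf_{u\in L^n}\|x-u\| \le d_n(X,B)+\varepsilon$. The key observation is that $L^n$ itself supplies a natural continuous map $\Pi : X \to R^n$: compose the metric projection (or, if that is not single-valued, any continuous near-projection obtained from a continuous selection / partition-of-unity argument) $P : X \to L^n$ with a fixed linear isomorphism $L^n \cong R^n$. Since $L^n$ is finite-dimensional, identifying it with $R^n$ is harmless, and the essential point will be that $P$ can be taken continuous with $\|x - P(x)\| \le d_n(X,B)+\varepsilon$ for every $x\in X$.

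Next I would estimate the diameter of a fibre. Suppose $x,y \in X$ satisfy $\Pi(x)=\Pi(y)$, which means $P(x)=P(y)=:u \in L^n$. Then
\[
\|x-y\| \le \|x-u\| + \|u-y\| \le 2\big(d_n(X,B)+\varepsilon\big).
\]
Hence $\operatorname{diam}\Pi^{-1}\Pi(x) \le 2(d_n(X,B)+\varepsilon)$ for each $x$, so $\sup_{x\in X}\operatorname{diam}\Pi^{-1}\Pi(x) \le 2(d_n(X,B)+\varepsilon)$, and taking the infimum over admissible $\Pi$ gives $\delta_n(X) \le 2(d_n(X,B)+\varepsilon)$. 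Letting $\varepsilon \to 0$ yields the claim.

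The one delicate point — and the step I expect to be the main obstacle — is the continuity of the map $P$ into $L^n$ that realizes the near-best approximation. If $B$ is such that the metric projection onto $L^n$ is single-valued and continuous (e.g.\ $B$ uniformly convex, or one simply invokes that a finite-dimensional subspace admits a continuous near-metric-projection on a compact set), this is immediate. In general, since $X$ is compact and $L^n$ finite-dimensional, one can cover $X$ by finitely many small balls, pick on each an element of $L^n$ that nearly attains the infimum, and glue these choices together with a continuous partition of unity subordinate to the cover; convexity of the norm then ensures the resulting $P(x)$ still satisfies $\|x-P(x)\| \le d_n(X,B)+\varepsilon$. This gives a continuous $\Pi$ as required, completing the argument. (If one prefers, the inequality can instead be read off directly from the definition of $\delta_n$ by noting that \emph{any} extremal or near-extremal subspace for $d_n$ furnishes such a $\Pi$, which is the route originally indicated in \citet{[2]}.)
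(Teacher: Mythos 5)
Your argument is correct. Note that the paper itself offers no proof of this lemma --- it is stated as a known fact with a citation to \citet{[2]} --- so there is nothing internal to compare against; your route (an $\varepsilon$-optimal subspace $L^n$, a continuous near-best-approximation map $P:X\to L^n$ built by a partition of unity over a finite cover of the compact set $X$, identification of $L^n$ with $R^n$, and the triangle inequality on each fibre) is the standard proof and is complete. You correctly identified the only delicate point, the continuity of $P$, and the partition-of-unity construction you sketch does resolve it: for $x$ in the support of $\phi_i$ one has $\|x-u_i\|\le\delta+d_n(X,B)+\varepsilon/2$, and convexity of the norm carries this bound over to the weighted average $P(x)=\sum_i\phi_i(x)u_i$.
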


Let $\zeta_k,$ $k=1,2,\ldots,r,$ be the zeros of the Chebyshev
polynomial of the first kind of degree $r.$
Moreover, let $f$  be a continuous function on $[-1,1],$ i.e. $f \in C([-1,1]).$
Finally, denote by  $L_r(f,[-1,1])$  the interpolating polynomial with respect to 
the Chebyshev nodes $\zeta_1,\ldots,\zeta_r.$

\begin{lemma} \rm\citep{[25]}.
\label{Lemma 1.3} If $f \in W^r,$ $r=1,2,\ldots,$ then \\
$
\|f- L_r(f,[-1,1])\|_{C[-1,1]} \leq \|f^{(r)}\|_{C[-1,1]}1/(r! 2^{r-1}).
$
\end{lemma}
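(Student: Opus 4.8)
The plan is to reduce the statement to a pointwise bound on the remainder of Chebyshev interpolation and then invoke the classical extremal property of the Chebyshev polynomial. First I would fix $t \in [-1,1]$ and recall the standard interpolation-error identity: since $L_r(f,[-1,1])$ interpolates $f$ at the $r$ Chebyshev nodes $\zeta_1,\dots,\zeta_r$, and $f\in W^r$ has a piecewise continuous $r$-th derivative, one has
\begin{equation}
f(t)-L_r(f,[-1,1])(t)=\frac{f^{(r)}(\xi)}{r!}\,\omega_r(t),\qquad \omega_r(t)=\prod_{k=1}^{r}(t-\zeta_k),
\label{(plan.1)}
\end{equation}
for some $\xi=\xi(t)\in[-1,1]$. (If one is uneasy about applying the mean-value form of the remainder under only piecewise continuity of $f^{(r)}$, the same bound follows from the integral/divided-difference representation $f(t)-L_r(f)(t)=\omega_r(t)\,[\,\zeta_1,\dots,\zeta_r,t\,]f$ together with the estimate $|[\,\zeta_1,\dots,\zeta_r,t\,]f|\le \|f^{(r)}\|_{C[-1,1]}/r!$, which holds for any $f$ with bounded $r$-th derivative a.e.)

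The second step is to identify $\omega_r$. The nodes $\zeta_k$ are exactly the zeros of the degree-$r$ Chebyshev polynomial of the first kind $T_r$, and the leading coefficient of $T_r$ is $2^{r-1}$; hence $\omega_r(t)=2^{-(r-1)}T_r(t)$. Since $|T_r(t)|\le 1$ on $[-1,1]$, we get the key pointwise bound
\begin{equation}
|\omega_r(t)|=\frac{|T_r(t)|}{2^{r-1}}\le \frac{1}{2^{r-1}},\qquad t\in[-1,1].
\label{(plan.2)}
\end{equation}
Combining \eqref{(plan.1)} and \eqref{(plan.2)} and bounding $|f^{(r)}(\xi)|$ by $\|f^{(r)}\|_{C[-1,1]}$ yields $|f(t)-L_r(f,[-1,1])(t)|\le \|f^{(r)}\|_{C[-1,1]}/(r!\,2^{r-1})$ for every $t$, and taking the supremum over $t\in[-1,1]$ gives the claimed estimate on the uniform norm.

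The only genuine subtlety — the "hard part," such as it is — is the regularity hypothesis: $W^r$ here (Definition 1.8) only requires $f^{(r-1)}$ absolutely continuous and $f^{(r)}$ piecewise continuous with $|f^{(r)}|\le 1$, rather than $f\in C^r$. This means the naive Rolle-type derivation of \eqref{(plan.1)} needs the divided-difference / Peano-kernel justification indicated above, so that the remainder is controlled by the essential supremum of $f^{(r)}$ rather than by a pointwise value at a genuine continuity point. Everything else is the textbook computation of the leading coefficient of $T_r$ and the bound $\|T_r\|_{C[-1,1]}=1$; I would cite \citet{[25]} for the precise form and keep the verification short.
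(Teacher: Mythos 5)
Your argument is correct and is exactly the classical proof that the cited source \citep{[25]} supplies: the paper itself states Lemma \ref{Lemma 1.3} without proof, so there is nothing internal to compare against, but your route (interpolation remainder via divided differences, nodal polynomial equal to $2^{-(r-1)}T_r$, and $\|T_r\|_{C[-1,1]}=1$) is the standard and complete one. Your remark about using the divided-difference/Peano form of the remainder to accommodate the merely piecewise continuous $f^{(r)}$ of Definition \ref{Definition 1.8} is the right way to handle the one genuine subtlety.
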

Now we  briefly recall the  estimates of the Babenko and Kolmogorov widths
of the classes  $Q_{r\gamma}(\Omega,1),$ and $Q_{r\gamma p}(\Omega,1)$ with respect to 
$C$ and $L_q$.

If in the following theorems no restrictions on one or several of the parameters are given, then
the full range as described in the respective definitions is admissible.

\begin{theorem} \rm\citep{[24],[19]}.
\label{Theorem 1.1} Let $\Omega = [-1,1].$ Then
$
\delta_n(Q_{r\gamma}(\Omega,1))\asymp \\ \asymp d_n(Q_{r\gamma}(\Omega,1),C))\asymp n^{-s}.
$
\end{theorem}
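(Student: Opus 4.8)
The plan is to establish the two-sided estimate $d_n(Q_{r\gamma}(\Omega,1),C)\asymp n^{-s}$ for $\Omega=[-1,1]$ and then obtain the matching estimate for $\delta_n$ using Lemma~\ref{Lemma 1.2} together with a lower bound argument. For the upper bound I would construct an explicit approximation scheme (a local spline) adapted to the geometry of the singularity at $\Gamma=\{-1,1\}$. Because the derivatives of $f$ of order $r<|v|\le s$ blow up like $(d(t,\Gamma))^{-(|v|-r-\zeta)}$, one must refine the mesh as one approaches the endpoints. The natural choice is a graded (geometric) partition: split $[-1,1]$ symmetrically into subintervals whose lengths decrease geometrically toward $\pm 1$, say with roughly $m$ levels and break points at distance $\sim 2^{-k}$ from the nearest endpoint, $k=1,\dots,m$, together with one central block. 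On each subinterval one uses local polynomial interpolation at the scaled Chebyshev nodes and invokes Lemma~\ref{Lemma 1.3} (in its $s$-th order form, applied after noting that the highest ``good'' derivative available on a subinterval at distance $\delta$ from $\Gamma$ is bounded by $\delta^{-(s-r-\zeta)}$ while the subinterval has length $\sim\delta$) to get a local error $\lesssim \delta^{s-r-\zeta}\cdot \delta^{-(s-r-\zeta)}\cdot(\text{const}) $ on the graded pieces and $\lesssim (\text{meshsize})^{r}$ type bounds on pieces where only derivatives up to $r$ are controlled. Balancing the number of interpolation parameters against the error across all levels gives total dimension $n$ and uniform error $\asymp n^{-s}$; this simultaneously bounds $d_n$ from above.

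For the lower bound I would appeal to Lemma~\ref{Lemma 1.1}. The idea is to exhibit, for a suitable $n$, a family of $n+1$ nodes $t_0,\dots,t_n$ and a small parameter $\epsilon\asymp n^{-s}$ such that for every sign pattern $\lambda_i=\pm1$ there is $f_0\in Q_{r\gamma}(\Omega,1)$ with $\operatorname{sign} f_0(t_i)=\lambda_i$ and $|f_0(t_i)|\ge\epsilon$. The standard device is to take disjoint bump functions supported on the subintervals of the graded partition described above, each bump being a scaled and translated copy of a fixed $C^\infty$ template, with amplitude chosen as large as the class membership constraints allow on that subinterval; on a subinterval of length $h$ at distance $\delta\sim h$ from $\Gamma$ the admissible amplitude is $\asymp h^{s}$ (this is exactly the exponent forced by the derivative bounds: an $|v|$-th derivative of a bump of amplitude $A$ and width $h$ is $\sim A h^{-|v|}$, and requiring this to be $\le (d(t,\Gamma))^{-(|v|-r-\zeta)}\asymp h^{-(|v|-r-\zeta)}$ for all $r<|v|\le s$ yields $A\lesssim h^{s}$, with the binding constraint at $|v|=s$). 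Summing the parameter counts over the levels, the worst (smallest) amplitude among the $n+1$ chosen bumps is $\asymp n^{-s}$, which gives $d_n\gtrsim n^{-s}$. Finally, $\delta_n(Q_{r\gamma}(\Omega,1))\le 2 d_n(Q_{r\gamma}(\Omega,1),C)\asymp n^{-s}$ by Lemma~\ref{Lemma 1.2}, and the lower bound $\delta_n\gtrsim n^{-s}$ follows from the same bump construction (a continuous map into $R^n$ cannot separate all $2^{n+1}$ sign patterns, so some fiber has diameter $\gtrsim\epsilon$), giving $\delta_n\asymp n^{-s}$ as well.

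The main obstacle I anticipate is the careful bookkeeping in the graded partition: one has to choose the number of geometric levels $m$ as a function of the total budget $n$ so that the error contributions from the innermost (finest) blocks near $\pm1$, from the intermediate blocks, and from the coarse central block are all of the same order $n^{-s}$, and one must verify that the local splines so constructed actually lie in the class $Q_{r\gamma}(\Omega,1)$ (or can be taken to, up to an absolute constant), which is what legitimizes calling the method ``optimal.'' A secondary technical point is handling the non-integer part $\zeta=\lceil\gamma\rceil-\gamma$ correctly, since the sharp exponent $s$ in $n^{-s}$ comes from the $|v|=s$ constraint with its fractional shift; the local interpolation error estimate from Lemma~\ref{Lemma 1.3} must be applied at order $s$ but with the derivative bound reflecting the $\zeta$-shift, and the arithmetic of the geometric sums must be done with that exponent. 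Once these are in place the argument is routine, and since this is the one-dimensional case $l=1$ there is no additional difficulty from tensor-product geometry.
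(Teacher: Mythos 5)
Your overall architecture is the right one and is essentially the method of the paper: a local spline on a graded mesh for the upper bound, disjoint bumps plus Lemma~\ref{Lemma 1.1} (and the corresponding sign-separation argument for $\delta_n$) for the lower bound, and Lemma~\ref{Lemma 1.2} to tie $\delta_n$ to $d_n$. This is exactly the scheme the paper runs for the closely related Theorems~\ref{Theorem 2.1} and~\ref{Theorem 2.2}. However, the specific mesh you choose breaks both halves of the argument. A dyadic partition with one subinterval per level $k$ (length and distance to $\Gamma$ both of order $2^{-k}$) has only $O(\log(1/h_{\min}))$ pieces, and on such a piece the interpolation error is of order $h^{s}\max|f^{(s)}|\asymp 2^{-ks}\cdot 2^{k(s-r-\zeta)}=2^{-k(r+\zeta)}$, which at the coarsest graded level ($k=1$) is a constant independent of $n$; indeed your own displayed local error, $\delta^{s-r-\zeta}\cdot\delta^{-(s-r-\zeta)}$, is identically $1$, which is the symptom. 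The correct mesh --- the one used in \citet{[24],[19]} and reproduced in the proofs of Theorems~\ref{Theorem 2.1}--\ref{Theorem 2.2} --- is the power-graded mesh $t_k=-1+(k/N)^{v}$, $\tau_k=1-(k/N)^{v}$ with $v=s/(s-\gamma)=s/(r+\zeta)$: then $h_k\asymp k^{v-1}N^{-v}$, $d(t,\Gamma)\asymp (k/N)^{v}$, the exponent of $k$ in $h_k^{s}\,d^{-(s-r-\zeta)}$ cancels exactly, and the error is $\asymp N^{-s}$ uniformly over $N\asymp n$ pieces (plus the separate treatment of the endpoint cells, which you do anticipate). A dyadic decomposition can be made to work only if each shell at level $k$ is further subdivided into roughly $\epsilon^{-1/s}2^{-k(r+\zeta)/s}$ equal pieces, which your description does not include.

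The same miscount affects the lower bound. For a bump of amplitude $A$ and width $h$ at distance $\delta\asymp h$ from $\Gamma$, the binding constraint $Ah^{-s}\le\delta^{-(s-r-\zeta)}$ gives $A\le c\,h^{r+\zeta}$, not $h^{s}$; with one bump per dyadic level and $n\asymp m$ levels, the smallest admissible amplitude is $2^{-m(r+\zeta)}$, i.e. exponentially small in $n$, so Lemma~\ref{Lemma 1.1} yields a vacuous bound. On the power-graded mesh the admissible amplitude on the $k$-th cell is $h_k^{s}(k/N)^{-v(s-r-\zeta)}\asymp N^{-s}$ for every $k$, so one obtains $N\asymp n$ disjoint bumps of common amplitude $\asymp n^{-s}$, which is exactly what Lemma~\ref{Lemma 1.1} and the Babenko-width argument require. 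In short, the idea is correct, but the grading exponent $v=s/(s-\gamma)$ and the accompanying exponent arithmetic are not optional refinements; without them neither the upper nor the lower estimate closes.
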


\begin{theorem} \rm\citep{[24],[19]}.
\label{Theorem 1.2} Let $\Omega = [-1,1].$ If $\gamma$ is integer, then
\begin{equation*}
d_n(Q_{r\gamma p}(\Omega,1),L_q) \asymp 
\begin{cases}
n^{-s+1/p-1/q}, & 1 \leq p < q \leq 2, \\
n^{-s+1/p-1/2}, & 1 \leq p \leq 2, \  2<q <\infty,\\
n^{-s}, & 1\le q\le p < \infty, \  2\le p \le q < \infty.
\end{cases}
\end{equation*}
\end{theorem}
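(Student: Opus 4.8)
The plan is to transfer the problem, from above and from below, to the classical Kolmogorov widths of the Sobolev ball $B(W_p^s)$ with $s=r+\gamma$ (equivalently, to the finite-dimensional widths $d_m(B_p^N,\ell_q^N)$). The integrality of $\gamma$ will be used only to keep $s=r+\gamma$ an integer, so that $W_p^s([0,1])$ is an ordinary Sobolev space and the embedding $W_p^s([0,1])\hookrightarrow C^r([0,1])$ (valid since $s-r=\gamma\ge 1>1/p$) is at hand. Write $d_m(B(W_p^s),L_q)\asymp m^{-\mu}$, where, by the classical theory, $\mu=s$ if $q\le p$ or $2\le p\le q$, $\mu=s-1/p+1/q$ if $p\le q\le 2$, and $\mu=s-1/p+1/2$ if $p\le 2\le q<\infty$; the claim to be proved is precisely $d_n(Q_{r\gamma p}(\Omega,1),L_q)\asymp n^{-\mu}$.

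\emph{Upper bound.} Cover $[-1,1]$ by the dyadic cells $\Delta_k^{+}=[1-2^{-k},1-2^{-k-1}]$, their mirror images $\Delta_k^{-}$ near $-1$, $k=0,1,\dots,K-1$, and two innermost slivers of length $2^{-K}$, with $K\asymp\log n$. On $\Delta_k^{\pm}$ one has $d(t,\Gamma)\asymp 2^{-k}$; letting $g$ be the affine rescaling of $f|_{\Delta_k^{\pm}}$ onto $[0,1]$, the constraints of Definition \ref{Definition 1.5} turn into $\|D^v g\|_{L_\infty[0,1]}\le C\,2^{-k|v|}$ for $|v|\le r$ and $\|D^v g\|_{L_p[0,1]}\le C\,2^{-k(|v|-\gamma-1/p)}$ for $r<|v|\le s$, so in particular $\|g^{(s)}\|_{L_p[0,1]}\le C\,2^{-k(r-1/p)}$. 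Subtracting $T_{r-1}(g,[0,1],0)$ leaves a remainder which, by Poincar\'e's inequality, lies in $C\,2^{-k(r-1/p)}B(W_p^s([0,1]))$; approximating that remainder in $L_q[0,1]$ by a near-optimal $m_k$-dimensional subspace for $B(W_p^s([0,1]))$, and undoing the rescaling, yields an $L_q(\Delta_k^{\pm})$-error of order $2^{-k(r-1/p+1/q)}m_k^{-\mu}$; on each sliver the degree $r-1$ Taylor polynomial already gives error of order $2^{-K(r+1/q)}$. The direct sum of the per-cell subspaces (together with the polynomial spaces used on the slivers) has dimension $\asymp\sum_k(m_k+r)$, and the $L_q([-1,1])$-error is the $\ell_q$-sum of the cell errors. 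Taking $m_k\asymp m_0\,2^{-\theta k}$ with $\theta>0$ small enough that the cell errors still decay geometrically in $k$ (so that the $\ell_q$-sum is of order $m_0^{-\mu}$, with no $(\log n)^{1/q}$ loss) and $K\asymp\log n$ large enough, the budget $n\asymp m_0+\log n$ forces $m_0\asymp n$, whence $d_n(Q_{r\gamma p}(\Omega,1),L_q)\le C\,n^{-\mu}$.

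\emph{Lower bound.} It is enough to work in the interior. On $[-\tfrac12,\tfrac12]$ the weight $(d(t,\Gamma))^{\gamma}$ is bounded above and below, and any $g\in C^r$ supported in $(-\tfrac12,\tfrac12)$ with $\|g^{(s)}\|_{L_p}$ small enough automatically satisfies $\|D^v g\|_{L_\infty}\le 1$ for $|v|\le r$ (by Poincar\'e's inequality together with $W_p^s\hookrightarrow C^r$) and the weighted integral bounds for $r<|v|\le s$; hence $Q_{r\gamma p}(\Omega,1)\supseteq c\,B(W_p^s([-\tfrac12,\tfrac12]))$ (functions extended by zero). Therefore $d_n(Q_{r\gamma p}(\Omega,1),L_q)\ge d_n\big(c\,B(W_p^s([-\tfrac12,\tfrac12])),L_q([-\tfrac12,\tfrac12])\big)\ge c'\,n^{-\mu}$ by the classical width estimate for Sobolev balls, which in turn follows by discretizing on a fine mesh to the finite-dimensional problem $d_n(c''B_p^N,\ell_q^N)$ and invoking the estimates of Kashin, Gluskin, and Garnaev--Gluskin. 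Combining the two bounds gives the asserted three-case asymptotics (and the corresponding estimate for $\delta_n$, if wanted, follows from Lemma \ref{Lemma 1.2}).

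\emph{Main obstacle.} The genuinely hard regime is $1\le p\le 2<q<\infty$: there the rate $n^{-s+1/p-1/2}$ is strictly larger than the $n^{-s+1/p-1/q}$ attainable by ordinary piecewise-polynomial splines, so the per-cell approximation in the upper bound must use Gluskin-type (random) subspaces for $B_p^N$ in $\ell_q^N$, and the matching lower bound relies on the sharp, historically delicate asymptotics of $d_n(B_p^N,\ell_q^N)$. The paper-specific bookkeeping---checking that the geometric grading exactly cancels the boundary growth of the derivatives, so that the boundary layer contributes only a convergent geometric series of the order of a single interior cell, with all constants uniform as $2^{-k}\to 0$---is routine by comparison, and the hypothesis that $\gamma$ be an integer is precisely what keeps $s=r+\gamma$ integral and the embedding $W_p^s\hookrightarrow C^r$ available.
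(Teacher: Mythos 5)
The paper does not actually prove Theorem \ref{Theorem 1.2}: it is imported verbatim from \citet{[24]} and \citet{[19]}, so there is no in-text argument to compare yours against line by line. Judged on its own and against the techniques the paper does display elsewhere, your proof is correct in all essentials. The rescaling bookkeeping checks out: on a cell at distance $\asymp 2^{-k}$ from $\Gamma$ the constraints of Definition \ref{Definition 1.5} do give $\|g^{(s)}\|_{L_p[0,1]}\le C2^{-k(r-1/p)}$ after affine rescaling, the Taylor remainder lies in $C2^{-k(r-1/p)}B(W^s_p)$ because $\|g^{(r)}\|_\infty\le 2^{-kr}\le 2^{-k(r-1/p)}$, the undone rescaling contributes the factor $2^{-k/q}$, and the condition $\theta\mu<r-1/p+1/q$ (satisfiable since $r-1/p+1/q\ge 1/q>0$) makes the $\ell_q$-sum of cell errors collapse to the $k=0$ term with total dimension $\asymp m_0+\log n$; the lower bound via a rescaled compactly supported Sobolev ball in the interior is likewise standard and sound. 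The main difference from the paper's own machinery is cosmetic but worth noting: throughout Sections \ref{sec;Sec2}--\ref{sec;Sec3} the author works with the algebraically graded mesh $t_k=-1+(k/N)^v$, $v=s/(s-\gamma)$ (here $v=s/r$), with a \emph{fixed} polynomial degree per cell, which equalizes the per-cell errors directly; you instead use a dyadic mesh and let the per-cell dimension $m_k\asymp m_02^{-\theta k}$ decay, which achieves the same equalization and has the advantage of plugging in the optimal $m_k$-dimensional subspaces for $B(W^s_p)$ as a black box. That black box is genuinely necessary in the regime $1\le p\le 2<q<\infty$, where no piecewise-polynomial construction attains $n^{-s+1/p-1/2}$ and one must invoke the Kashin--Gluskin subspaces, a point you identify correctly; for the other two regimes the paper's spline-only construction would suffice. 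One small presentational gap: you should state explicitly that the per-cell approximating spaces together with the degree-$(r-1)$ polynomial spaces (needed because $T_{r-1}(g,[0,1],0)$ depends on $f$) are assembled into a single $n$-dimensional subspace of $L_q[-1,1]$, and that cells where $m_k$ falls below $1$ revert to pure Taylor approximation, whose contribution is controlled by the same inequality $\theta\mu<r-1/p+1/q$; but these are exactly the routine checks you flag, and they do go through.
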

 
\begin{theorem} \rm\citep{[24],[19]}.
\label{Theorem 1.3} Let $\Omega = [-1,1]^l,$ $l \geq 2.$ Then
\begin{equation*}
\delta_n(Q_{r \gamma}(\Omega,1)) \asymp d_n(Q_{r \gamma}(\Omega,1),C) \asymp
\begin{array}{rl}
n^{-(s-\gamma)/(l-1)}, & v>l/(l-1),\\
n^{-s/l}, & v<l/(l-1),\\
n^{-s/l}(\ln n)^{s/l}, & v=l/(l-1),
\end{array}
\end{equation*}
where $v=s/(s-\gamma).$
\end{theorem}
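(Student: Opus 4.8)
The plan is to sandwich both widths between the same order bounds. For the upper bounds I would construct, for each $n$, an $n$-dimensional local spline space realizing an approximation error of the claimed order; this bounds $d_n(Q_{r\gamma}(\Omega,1),C)$ from above, and hence, by Lemma~\ref{Lemma 1.2}, $\delta_n(Q_{r\gamma}(\Omega,1))$ as well. For the lower bounds I would exhibit $\asymp n$ disjoint ``bump'' functions lying in a fixed multiple of $X=Q_{r\gamma}(\Omega,1)$ whose common height is of the claimed order; Lemma~\ref{Lemma 1.1} then bounds $d_n(X,C)$ from below, and a Borsuk-type antipodality argument on the span of the bumps (as in \citet{[2],[3]}) bounds $\delta_n(X)$ from below. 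Since all the rates appearing are stable under constant rescalings of $n$, matching upper and lower bounds will give the assertion.

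For the upper bound I would use a mesh on $\Omega=[-1,1]^l$ graded toward $\Gamma$. Fix a target accuracy $\varepsilon>0$ and set $\rho_0=\varepsilon^{1/(s-\gamma)}$. In the bulk $\{\,d(t,\Gamma)\ge\rho_0\,\}$ split into dyadic shells $d(t,\Gamma)\asymp 2^{-k}$, $0\le k\le K:=\lceil\log_2(1/\rho_0)\rceil$, and tile the $k$-th shell by cubes of side $h_k\asymp\varepsilon^{1/s}2^{-k\gamma/s}$; in the boundary layer $\{\,d(t,\Gamma)\le\rho_0\,\}$ split into dyadic shells $d(t,\Gamma)\asymp 2^{-j}$, $2^{-j}\le\rho_0$, and tile the $j$-th shell by cubes of side $\asymp 2^{-j}$. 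Each such cube $Q$ of side $h$ is disjoint from $\Gamma$, so $f\in C^s(Q)$ with $\max_{|v|=s}\|D^vf\|_{C(Q)}\lesssim(d(t,\Gamma))^{-\gamma}$; applying Lemma~\ref{Lemma 1.3} iteratively in each coordinate (together with the Taylor-remainder bookkeeping recalled before Lemma~\ref{Lemma 1.1}), the tensor Chebyshev interpolant of coordinate-degree $s-1$ approximates $f$ on $Q$ with error $\lesssim h^s\max_{|v|=s}\|D^vf\|_{C(Q)}$, and the grading is chosen precisely so that this is $\lesssim\varepsilon$ on every cube (indeed $h_k^s 2^{k\gamma}\asymp\varepsilon$, and $h^s(d(t,\Gamma))^{-\gamma}\asymp(2^{-j})^{s-\gamma}\le\varepsilon$ in the boundary layer). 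The total number $n$ of parameters is $\asymp$ the number of cubes; counting shell by shell gives a geometric series in $k$ with ratio $2^{-(s-l\gamma)/s}$ for the bulk, plus $\asymp\rho_0^{-(l-1)}$ cubes in the boundary layer.

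Optimizing the resulting relation between $n$ and $\varepsilon$ produces the three regimes: when $s>l\gamma$, equivalently $v<l/(l-1)$, the bulk series converges, $n\asymp\varepsilon^{-l/s}$, and the error is $\asymp n^{-s/l}$; when $s<l\gamma$, equivalently $v>l/(l-1)$, the bulk series is dominated by its last term and the bulk and boundary-layer counts coincide at $\asymp\varepsilon^{-(l-1)/(s-\gamma)}$, giving error $\asymp n^{-(s-\gamma)/(l-1)}$; when $s=l\gamma$, i.e. $v=l/(l-1)$, the series contributes the extra factor $K\asymp\log(1/\varepsilon)$, so $n\asymp\varepsilon^{-l/s}\log(1/\varepsilon)$ and the error is $\asymp n^{-s/l}(\log n)^{s/l}$. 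For the matching lower bound I would place on each cube $Q$ of the near-optimal mesh a scaled bump $b_Q=c\,\varepsilon_Q\,\omega((t-t_Q)/h)$, with $\omega$ a fixed $C^\infty$ bump supported in the unit cube and $\varepsilon_Q\asymp h^s(d(\cdot,\Gamma))^{-\gamma}$ the local error scale (so $\varepsilon_Q\gtrsim\varepsilon$, with equality order on the innermost cubes); the grading guarantees $|D^vb_Q|\lesssim h^{-|v|}\varepsilon_Q$ meets every constraint of Definition~\ref{Definition 1.4} on $Q$, the nontrivial checks reducing to $s\ge r+\gamma$ and $s\ge r+\lceil\gamma\rceil$, which hold by the definition of $s$. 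Since the supports are disjoint, every sign combination $\sum_i\lambda_ib_{Q_i}$ ($\lambda_i=\pm1$) again lies in a fixed multiple of $X$, so Lemma~\ref{Lemma 1.1} yields $d_{m-1}(X,C)\gtrsim\varepsilon$ with $m\asymp n$; the disjoint-support span of the $b_{Q_i}$ contains a scaled $m$-dimensional octahedron inside $X$ whose boundary sphere forces $\delta_{m-1}(X)\gtrsim\varepsilon$ by antipodality.

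The main obstacle is the construction of the graded mesh and its bookkeeping: selecting the grading exponent so that the local interpolation error is balanced across all $\asymp\varepsilon^{-\cdot}$ cubes simultaneously, summing the geometric series over dyadic shells in both regions (in particular identifying the logarithmic borderline $s=l\gamma$ and the crossover $s\lessgtr l\gamma$ that produces the case split), and verifying that the bump functions satisfy the full hierarchy of derivative bounds $0\le|v|\le s$ of Definition~\ref{Definition 1.4} in both the interior and the boundary-layer scalings. Given Lemmas~\ref{Lemma 1.1} and~\ref{Lemma 1.2} and the disjointness of the bump supports, the topological lower bound for $\delta_n$ is, by comparison, routine.
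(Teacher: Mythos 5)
Your overall architecture is the same as the paper's: a mesh graded so that the local tensor--Chebyshev error $h^s\max_{|v|=s}\|D^vf\|_{C(Q)}$ is equalized across cells, a cell count whose sum over shells produces the three regimes at the crossover between $s>l\gamma$, $s=l\gamma$ and $s<l\gamma$ (equivalently $v<l/(l-1)$, $v=l/(l-1)$, $v>l/(l-1)$), and disjoint bumps fed into Lemma \ref{Lemma 1.1} together with a Borsuk-type argument for the lower bounds. The paper (and its Section 3 analogues, Theorems \ref{Theorem 3.1}--\ref{Theorem 3.4}) uses the algebraic grading $d(t,\Gamma)\in[(k/N)^v,((k+1)/N)^v]$ with cells of edge $h_k=((k+1)/N)^v-(k/N)^v$ rather than your dyadic shells, but this is only a reparametrization: the balancing $h_k^s(k/N)^{-v\gamma}\asymp N^{-s}$ and the resulting counts $N^{v(l-1)}$, $N^l$, $N^l\ln N$ coincide with yours. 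Your bump construction and regime bookkeeping are correct.

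The gap is in the boundary layer $\{d(t,\Gamma)\le\rho_0\}$. As you describe it --- dyadic shells $d(t,\Gamma)\asymp2^{-j}$ for every $2^{-j}\le\rho_0$, each tiled by cubes of side $\asymp2^{-j}$ --- the construction never terminates: shell $j$ contains $\asymp2^{j(l-1)}$ cubes, so the total is $\sum_{j\ge J}2^{j(l-1)}=\infty$, and no finite truncation covers a neighbourhood of $\Gamma$; your stated count $\rho_0^{-(l-1)}$ is only the outermost shell. One cannot avoid cells that actually meet $\Gamma$, and on such a cell the bound $h^s\max_{|v|=s}\|D^vf\|_{C(Q)}\le c\,h^s(d(t,\Gamma))^{-\gamma}$ that you invoke is vacuous, since the right-hand side is infinite. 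The missing idea --- and the only genuinely non-routine step in the paper's proofs (see the $\Delta^0$, i.e. $k=0$, computations in Theorems \ref{Theorem 2.1}, \ref{Theorem 2.2}, \ref{Theorem 3.1} and \ref{Theorem 3.4}) --- is to take a \emph{single} layer of boundary-touching cells of side $h_0=\rho_0=N^{-v}$ and to estimate the error there by Taylor's expansion of order $r$ with integral remainder, exploiting that Definition \ref{Definition 1.4} gives $|D^vf|\le(d(t,\Gamma))^{-(1-\zeta)}$ for $|v|=r+1$, an integrable singularity; this yields an error of order $h_0^{r+\zeta}=h_0^{s-\gamma}=N^{-s}=\varepsilon$ on exactly $\asymp\rho_0^{-(l-1)}$ cells, which is what the regime analysis requires. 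Without this step the upper bounds, and hence the two-sided asymptotics, are not established. (Your lower-bound bumps sit on cells with $d(t,\Gamma)$ at least comparable to the cell size, so they are unaffected by this issue.)
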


\begin{theorem} \rm\citep{[24],[19]}.
\label{Theorem 1.4} Let $\Omega = [-1,1]^l,$ $l \geq 2,$ $1 \leq q \leq p <
\infty.$ If $\gamma$ is integer, then
\begin{equation*}
d_n(Q_{r \gamma p}(\Omega,1),L_q) \asymp
\begin{array}{rl}
n^{-r/(l-1)}, & v>l/(l-1),\\
n^{-s/l}, & v<l/(l-1),\\
(\ln n/n)^{s/l}, & v=l/(l-1),
\end{array}
\end{equation*}
where $v=s/(s-\gamma).$
\end{theorem}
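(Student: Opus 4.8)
The plan is to prove the two-sided estimate by a matching pair of constructions: a constructive upper bound obtained from local polynomial approximation on a partition of $\Omega$ refined towards the boundary, and a lower bound obtained from a sign-pattern (fooling-set) argument. Since $\gamma$ is an integer we have $\zeta=0$ and $s=r+\gamma$, so on the top orders the defining constraint is simply $\int_\Omega|(d(t,\Gamma))^\gamma D^vf(t)|^p\,dt\le1$ for $r<|v|\le s$, and $v=s/(s-\gamma)=s/r$ in the case distinction.

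For the upper bound I would decompose $\Omega$ into the dyadic layers $G_k=\{t\in\Omega:2^{-k-1}\le d(t,\Gamma)\le2^{-k}\}$, $k=0,1,\dots,m$, together with the residual collar $\{d(t,\Gamma)<2^{-m}\}$, and tile the $k$-th layer by cubes of side $h_k$, the $h_k$ and $m$ to be optimized at the end. On a cube $Q\subset G_k$ I replace $f$ by its interpolant at a tensor product of Chebyshev nodes; the multivariate analogue of Lemma~\ref{Lemma 1.3} gives a local error of order $h_k^{s}\|D^sf\|_{L_\infty(Q)}$ when degree $s-1$ is used, while in the collar, where only $|D^vf|\le1$ for $|v|\le r$ is available, degree $r-1$ gives error of order $(2^{-m})^{r}$. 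The one genuinely new ingredient is converting the $L_p$-control of $(d(t,\Gamma))^\gamma D^sf$ into an $L_q$-error on a cube: since $q\le p$, Hölder's inequality on $Q$ gives $\|D^sf\|_{L_q(Q)}\le C\,h_k^{l(1/q-1/p)}\|D^sf\|_{L_p(Q)}\le C\,2^{k\gamma}h_k^{l(1/q-1/p)}\|(d(t,\Gamma))^\gamma D^sf\|_{L_p(Q)}$; raising this to the $q$-th power, summing over the cubes of $G_k$ (a further Hölder inequality in the cube index) and then over $k$, and using $\sum_k\|(d(t,\Gamma))^\gamma D^sf\|_{L_p(G_k)}^p\le1$, one gets a global $L_q$-error bound that depends only on $\{h_k\}$ and $m$. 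The number of parameters used is $n\asymp\sum_k2^{-k}h_k^{-l}+2^{-m}\eta^{-l}$, with $\eta$ the cube size in the collar, and I would minimize the error under this budget. Equalizing the contributions of the blocks leads to a geometric series in $k$ whose ratio changes sign exactly at the threshold $v=l/(l-1)$: when the interior layers dominate the optimum one gets the rate $n^{-s/l}$; when the near-boundary contribution dominates one gets $n^{-r/(l-1)}$; in the balanced case the series is harmonic-type, of length of order $\ln n$, which produces the factor $(\ln n/n)^{s/l}$.

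For the lower bound I would use the $L_q$ counterpart of the fooling-set argument behind Lemma~\ref{Lemma 1.1}: choose $N$ functions $\varphi_1,\dots,\varphi_N\in Q_{r\gamma p}(\Omega,1)$ with pairwise disjoint supports contained in cubes of a common side $h$, placed in a layer at distance of order $h$ from $\Gamma$ in the boundary-dominated regime and in the interior in the interior-dominated regime, and scaled by the largest amplitude $A$ for which every combination $\sum_ic_i\varphi_i$ with $|c_i|\le1$ still satisfies all constraints; so $A$ is the smaller of what the pointwise bound ($A\le C\,h^r$) and the shared $L_p$-budget permit. Because the $\varphi_i$ have disjoint supports, the image of the cube $\{|c_i|\le1\}$ under $c\mapsto\sum_ic_i\varphi_i$ is a scaled copy of the $\ell_\infty^N$-ball inside $L_q$, so $d_n(Q_{r\gamma p}(\Omega,1),L_q)$ is bounded below by the Kolmogorov width of that body, which is of order $A\,(\mathrm{vol}\,Q)^{1/q}(N-n)^{1/q}$; taking $N\asymp n$ and letting $h$ be as large as the packing near $\Gamma$ allows, and comparing with the interior construction, reproduces the upper-bound orders in all three regimes. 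Structurally the answer coincides with that of Theorem~\ref{Theorem 1.3} for the pointwise-bounded class in $C$, which is expected: for $q\le p$ an $L_p$-bound on the derivatives is, in the width sense, no weaker than the corresponding $L_\infty$-bound.

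The step I expect to be the main obstacle is making the lower bound match the upper bound exactly in the boundary-dominated regime and at the transition. The naive bump count near $\partial\Omega$ loses the factor $(\mathrm{vol}\,Q)^{1/q}$, so to recover the sharp rate $n^{-r/(l-1)}$ one must argue more carefully --- either through a more efficient packing, in which the relevant finite-dimensional body is larger than a single $\ell_\infty^N$-ball, or by reducing to a genuinely $(l-1)$-dimensional $C^{r}$-approximation problem along $\Gamma$ and invoking the classical width $d_n(B(C^{r}),L_q)\asymp n^{-r/(l-1)}$, which carries no $L_q$-correction. The critical case $v=l/(l-1)$ is the other delicate point: the sums over dyadic layers in both constructions must be evaluated precisely enough to yield the correct power of $\ln n$, and one has to check that the geometry of $\Omega$ near its edges and corners does not affect the order.
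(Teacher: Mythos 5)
The first thing to say is that the paper contains no proof of Theorem~\ref{Theorem 1.4}: it is quoted from \citep{[24],[19]} as background, so there is nothing in this text to compare your argument against line by line. What can be said is that the machinery you propose --- a partition of $\Omega$ graded towards $\Gamma$, local tensor-product Chebyshev interpolation on each cell, a node count that balances the layers, and a lower bound from disjoint bumps in the spirit of Lemma~\ref{Lemma 1.1} --- is exactly the machinery this paper deploys for the theorems it does prove (Theorems~\ref{Theorem 3.1}--\ref{Theorem 3.10}), with the graded mesh $t_k=(k/N)^v$ in place of your dyadic layers; the two gradings are equivalent reparametrizations. So your plan is the right one in outline.

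Two steps, however, are genuine gaps rather than routine verifications. First, in the upper bound you estimate the local error of Chebyshev interpolation by $h_k^s\|D^sf\|_{L_\infty(Q)}$ and then try to pass to $\|D^sf\|_{L_p(Q)}$ by H\"older. This cannot work as written: the class $Q_{r\gamma p}(\Omega,1)$ controls the top-order derivatives only in $L_p$, and a function with small $\|D^sf\|_{L_p(Q)}$ can still have $D^sf$, and hence the interpolation error, arbitrarily large at a node. You must replace interpolation by local best (or near-best) $L_q$ approximation and invoke a Bramble--Hilbert/Sobolev-type estimate $\inf_P\|f-P\|_{L_q(Q)}\le c\,h_k^{\,s+l/q-l/p}\|D^sf\|_{L_p(Q)}$, checking the embedding condition this requires; the resulting spline need not be continuous, which is harmless for a width upper bound. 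Second --- and you flag this yourself --- the lower bound in the regime $v>l/(l-1)$ does not follow from the disjoint-bump construction: since the bumps live in a collar of volume $O(N^{-v})$, the amplitude is capped at $h_0^r$ by the pointwise constraints, and the $\ell_\infty^N$-ball argument yields only $c\,n^{-(r+1/q)/(l-1)}$, short of $n^{-r/(l-1)}$ by exactly the volume factor you identify; neither of your proposed repairs is carried out, and the reduction to an $(l-1)$-dimensional problem on $\Gamma$ is not obviously legitimate because the competitors in $d_n(\cdot,L_q(\Omega))$ are arbitrary $n$-dimensional subspaces of $L_q(\Omega)$ and the $L_q$ norm still integrates over the transverse direction. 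The same issue affects the exact power of $\ln n$ at $v=l/(l-1)$, which you assert rather than compute. Until these two points are settled the proposal is a credible programme, not a proof.
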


\begin{theorem} \rm\citep{[24],[19]}.
\label{Theorem 1.5} Let $\Omega = [-1,1]^l,$ $l \geq 2,$ $1 \leq p < q \leq 2.$ 
If $\gamma$ is integer, then
\begin{equation*}
d_n(Q_{r \gamma p}(\Omega,1),L_q) \asymp
\begin{cases}
n^{-(r-l/p+l/q)/(l-1)}, & v>l/(l-1),\\
n^{-(s-l/p+l/q)/l}, & v<l/(l-1),\\
(\ln n/n)^{(s-l/p+l/q)/l}, & v=l/(l-1),
\end{cases}
\end{equation*}
where $v=(s-l/p+l/q)/(s-l/p+l/q-\gamma).$
\end{theorem}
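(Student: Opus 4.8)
The plan is to prove the two-sided estimate by treating the upper and lower bounds separately, following the scheme the author used for $Q_{r\gamma}(\Omega,1)$ and $Q_{r\gamma p}(\Omega,1)$ in \citet{[24],[19]} (the scheme underlying Theorems \ref{Theorem 1.2}--\ref{Theorem 1.5}), with one new ingredient: the sharp two-sided estimates of the finite-dimensional Kolmogorov widths $d_n(B_p^N,\ell_q^N)$ for $1\le p<q\le2$ (Kashin, Gluskin; see \citet{[12],[13]} and the references therein). It is precisely the availability of such estimates in the range $1\le p<q\le2$ that forces the hypothesis on $p$ and $q$, and it is the embedding exponent appearing in them that produces the correction $-l/p+l/q$ in all three rates. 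Throughout put $\sigma:=s-l/p+l/q$, so that $\sigma-\gamma=r-l/p+l/q$ (recall $\gamma$ is an integer and $s=r+\gamma$); one checks that $v>l/(l-1)$, $v<l/(l-1)$ and $v=l/(l-1)$ are equivalent to $\sigma<l\gamma$, $\sigma>l\gamma$ and $\sigma=l\gamma$ respectively, and one assumes $\sigma-\gamma>0$ (automatic when $v<l/(l-1)$, and the natural nondegeneracy hypothesis otherwise, without which the first rate is not even decreasing).

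For the upper bound I would cover $\Omega$ by the dyadic boundary shells $\Delta_k=\{t\in\Omega:2^{-k-1}\le d(t,\Gamma)\le2^{-k}\}$, $k=0,1,\dots,M$, together with the inner slab $\{t\in\Omega:d(t,\Gamma)<2^{-M}\}$, where $\varepsilon$ is the target accuracy and $M$ is of order $\log(1/\varepsilon)$. Each shell has $l$-dimensional measure of order $2^{-k}$; I subdivide it into roughly $2^{k(l-1)}m_k^{l}$ congruent subcubes of side roughly $2^{-k}/m_k$, and on each subcube I replace $f$ by an averaged (Sobolev) Taylor polynomial of degree $s-1$, which is again a local spline lying, up to a fixed constant factor, in the class. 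By the scaled Bramble--Hilbert inequality together with the Sobolev embedding $W^s_p\hookrightarrow L_q$ on the reference cube (here one uses $\sigma>0$ and that on a fixed subcube of $\Delta_k$ the weight $d(\cdot,\Gamma)^{\gamma}$ is of order $2^{-k\gamma}$, so $f$ is locally in $W^s_p$), the $L_q$-error on a subcube $Q\subset\Delta_k$ is at most a constant times $(2^{-k}/m_k)^{\sigma}\,2^{k\gamma}\,\|d(\cdot,\Gamma)^{\gamma}D^sf\|_{L_p(Q)}$. Raising this to the power $q$, summing over all subcubes, and using $\|a\|_{\ell_q}\le\|a\|_{\ell_p}$ (valid since $q>p$), the convexity of $b\mapsto b^{q/p}$ and the single global constraint $\int_\Omega|d(\cdot,\Gamma)^{\gamma}D^sf|^p\le1$, one bounds the total $L_q$-error by a constant times $\max_{0\le k\le M}2^{-k(\sigma-\gamma)}m_k^{-\sigma}$; the inner slab contributes at most a constant times $(2^{-M})^r$, which does not exceed $\varepsilon$ because there $|D^vf|\le1$ for $|v|\le r$. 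Meanwhile the dimension of the approximating space is of order $\sum_{k=0}^M2^{k(l-1)}m_k^{l}+2^{M(l-1)}$; call it $n$. Equalising the shell errors, i.e.\ taking $2^{-k(\sigma-\gamma)}m_k^{-\sigma}$ equal to $\varepsilon$ for all $k$, yields $m_k$ of order $\varepsilon^{-1/\sigma}2^{-k(1-\gamma/\sigma)}$, which remains $\ge1$ precisely for $k\le M:=(\log_2(1/\varepsilon))/(\sigma-\gamma)$; the dimension count then becomes, up to constants, $\varepsilon^{-l/\sigma}\sum_{k=0}^{M}2^{k(l\gamma-\sigma)/\sigma}+\varepsilon^{-(l-1)/(\sigma-\gamma)}$. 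This geometric sum is dominated by its last term when $\sigma<l\gamma$, by its first term when $\sigma>l\gamma$, and is of order $\log(1/\varepsilon)$ when $\sigma=l\gamma$; solving for $\varepsilon$ in terms of $n$ in the three cases gives $\varepsilon\asymp n^{-(r-l/p+l/q)/(l-1)}$, $\varepsilon\asymp n^{-(s-l/p+l/q)/l}$ and $\varepsilon\asymp(\ln n/n)^{(s-l/p+l/q)/l}$ respectively --- exactly the claimed upper bounds.

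For the lower bound I would, in each regime, construct a family of $N$ disjointly supported bump functions $\varphi_1,\dots,\varphi_N$, each supported in one subcube of side $h$ of a suitably chosen shell $\Delta_k$ (equivalently, in a thin $l$-dimensional layer at distance of order $\delta$ from $\Gamma$, where $\delta=2^{-k}$), scaled so that the whole set $\{\,c_1\varphi_1+\dots+c_N\varphi_N:c\in K\,\}$ is contained in $Q_{r\gamma p}(\Omega,1)$, where $K\subset R^N$ is the intersection of a cube (arising from the bound $|D^vf|\le1$, $|v|\le r$) with an $\ell_p^N$-ball (arising from the weighted $L_p$-bound on $D^sf$), the two radii being explicit powers of $h$ and $\delta$. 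Since the supports are disjoint, the $\varphi_j$ may be taken of one and the same $L_q$-norm $A$, whence $\|c_1\varphi_1+\dots+c_N\varphi_N\|_{L_q}=A\,\|c\|_{\ell_q^N}$ and therefore $d_n(Q_{r\gamma p}(\Omega,1),L_q)\ge c\,A\,d_n(K,\ell_q^N)$. It then remains to estimate the width of this truncated $\ell_p$-ball in $\ell_q^N$ by the Kashin--Gluskin estimates, and to choose $h$, $\delta$ and $N$ --- for $v=l/(l-1)$ spreading the bumps over about $\log n$ shells so as to recover the logarithmic factor --- in such a way that the resulting lower bound has the same order as the upper bound. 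The optimal configuration places the bumps, when $v>l/(l-1)$, in a layer at distance about $n^{-1/(l-1)}$ from $\Gamma$, and, when $v<l/(l-1)$, on a quasi-uniform mesh of cubes of side about $n^{-1/l}$ in the interior (about $n$ of them).

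The main obstacle is exactly this matching: pinning down the optimal shell depth, mesh size and number $N$ of bumps, and invoking the finite-dimensional widths $d_n(K,\ell_q^N)$ with the right dependence on the ratio $N/n$, so that the powers of $n$ (and, when $v=l/(l-1)$, the power of $\ln n$) come out precisely as stated. The entire effect of the hypothesis $1\le p<q\le2$ and of the correction $-l/p+l/q$ is concentrated in this step; everything else is a routine adaptation of the arguments behind Theorems \ref{Theorem 1.2}, \ref{Theorem 1.3} and \ref{Theorem 1.4}.
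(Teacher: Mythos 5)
You should first note that the paper does not prove Theorem \ref{Theorem 1.5} at all: it is stated with the citation \citep{[24],[19]} in the section that ``briefly recalls'' earlier results, so there is no in-paper proof to compare against. Judged against the methodology the paper does exhibit for its analogous theorems (graded partitions refining toward $\Gamma$, local interpolating/Taylor polynomials for the upper bound, disjointly supported normalized bumps reduced to finite-dimensional widths for the lower bound, as in Theorems \ref{Theorem 2.1} and \ref{Theorem 3.1}--\ref{Theorem 3.10}), your plan has exactly the right architecture, and your extra ingredients for the $L_q$ setting --- the scaled Bramble--Hilbert/Sobolev-embedding step producing the correction $-l/p+l/q$, and the Gluskin--Kashin estimates for $d_n(B^N_p,\ell^N_q)$ with $1\le p<q\le 2$ --- are the standard and correct ones. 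Your upper-bound bookkeeping checks out: with $\sigma=s-l/p+l/q$ the three regimes $v\gtrless l/(l-1)$ do correspond to $\sigma\lessgtr l\gamma$, the equalization $m_k\asymp\varepsilon^{-1/\sigma}2^{-k(\sigma-\gamma)/\sigma}$ is right, and the geometric sum yields precisely the three stated rates; your observation that one must implicitly assume $r>l/p-l/q$ in the regime $v>l/(l-1)$ is a genuine and correct point about the statement itself.

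The weakness is that the lower bound --- which you yourself identify as ``the main obstacle'' and which is where the entire dependence on $p$ and $q$ lives --- is only schematized, not carried out. You never fix the radii of the cube and the $\ell_p^N$-ball defining $K$, never state which two-sided estimate of $d_n(K,\ell_q^N)$ you are invoking for $1\le p<q\le 2$ and for which ratio $N/n$, and never verify that the optimal shell depth $\delta$ and bump count $N$ reproduce the factor $n^{1/q-1/p}$ (and, for $v=l/(l-1)$, the logarithm) so as to match the upper bound in all three regimes. Until that computation is done, what you have is a credible and correctly calibrated outline consistent with the approach of \citet{[24]} and \citet{[19]}, not a complete proof.
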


\begin{theorem} \rm\citep{[24],[19]}.
\label{Theorem 1.6} Let $\Omega = [-1,1]^l,$ $l \geq 2,$ $1 \leq p \leq 2, 2< q < \infty.$ 
If $\gamma$ is integer, then
\begin{equation*}
d_n(Q_{r \gamma p}(\Omega,1),L_q) \asymp
\begin{cases}
n^{-(r-l/p+l/q)/(l-1)+1/q-1/2}, & v>l/(l-1),\\
n^{-(s/l-1/p+1/2)}, & v<l/(l-1),
\end{cases}
\end{equation*}
where $v=(s-l/p+l/q)/(s-l/p+l/q-\gamma).$
\end{theorem}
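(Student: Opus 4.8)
The plan is to run, for the class $Q_{r\gamma p}(\Omega,1)$ in the new range $1\le p\le 2<q<\infty$, the boundary-layer scheme behind Theorems \ref{Theorem 1.4} and \ref{Theorem 1.5}. Since $\gamma$ is an integer we have $s=r+\gamma$, $\zeta=0$; on the dyadic shell $\Omega(k)=\{t\in\Omega:\,2^{-k-1}\le d(t,\Gamma)\le 2^{-k}\}$ the weight is $d(t,\Gamma)\asymp 2^{-k}$, so Definition \ref{Definition 1.5} gives $\int_{\Omega(k)}|D^vf|^p\,dt\ll 2^{k\gamma p}$ for $|v|=s$, while $\Omega(k)$ has $l$-volume $\asymp 2^{-k}$. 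Thus $f|_{\Omega(k)}$ behaves like a $W^s_p$-function of size $\asymp 2^{k\gamma}$ spread over $\asymp 2^{k(l-1)}$ cubes of side $\asymp 2^{-k}$; the only structural novelty over Theorem \ref{Theorem 1.5} is that the finite-dimensional widths $d_m(B(\ell_p^{N}),\ell_q^{N})$ deciding how many local pieces must be retained are now taken for $p\le 2\le q$, which is exactly what turns the exponent $1/p-1/q$ of Theorem \ref{Theorem 1.5} into $1/q-1/2$ (and $l/q$ into $l/2$ in the bulk-dominated case).

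\emph{Upper bound.} Write $\Omega=\Omega(0)\cup\bigcup_{k=1}^{N}\Omega(k)\cup\Omega_\ast$, where $\Omega(0)=\{d(t,\Gamma)\ge 1/2\}$ and $\Omega_\ast=\{d(t,\Gamma)\le 2^{-N-1}\}$ has $l$-volume $\ll 2^{-N}$; on $\Omega_\ast$ replace $f$ by its Taylor polynomial $T_{r-1}$, which costs $L_q$-error $\ll 2^{-Nr}$. On $\Omega(0)$ and each $\Omega(k)$ subdivide into cubes of side $\asymp 2^{-k}$ (refining by a factor $\mu_k$ per direction when profitable) and on each subcube interpolate $f$ by the tensor-product polynomial of coordinate degree $s-1$ at Chebyshev nodes; by the $L_p\to L_q$ analogue of Lemma \ref{Lemma 1.3} the local error on a cube of side $\delta$ is $\ll\delta^{\,s-l/p+l/q}\|D^sf\|_{L_p(\mathrm{cube})}$. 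Treating the shell errors as an $\ell_q$-vector, using $q\ge p$ and the shell constraint, and keeping on each shell only an optimal $m_k$-dimensional part of the local span — here $d_{m_k}(B(\ell_p^{M_k}),\ell_q^{M_k})$ with $p\le 2\le q$ enters — one balances the truncation error $2^{-Nr}$, the interior errors, and the total dimension $n\asymp\sum_k m_k$. Comparing the geometric rates across shells produces exactly the dichotomy governed by $v$ and $l/(l-1)$: if $v>l/(l-1)$ the innermost shells dominate, giving the $(l-1)$-dimensional rate with effective smoothness $r$ corrected by $1/q-1/2$; if $v<l/(l-1)$ the bulk dominates, giving the $l$-dimensional $W^s_p\to L_q$ rate $n^{-(s/l-1/p+1/2)}$.

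\emph{Lower bound.} In whichever shell $\Omega(k)$ (resp.\ in $\Omega(0)$) the optimization singles out, place a maximal family of $N$ disjoint cubes of side $\asymp 2^{-k}$ and on each a translate $\varphi_j$ of one fixed smooth bump, rescaled so that $c\sum_j a_j\varphi_j\in Q_{r\gamma p}(\Omega,1)$ precisely when $(a_j)\in B(\ell_p^N)$ — it is the weighted $L_p$ constraint of Definition \ref{Definition 1.5} that manufactures the $\ell_p$-ball. On this linear family the $L_q$-norm is, up to constants, a scaled $\ell_q^N$-norm, so Definition \ref{Definition 1.1} yields $d_n(Q_{r\gamma p}(\Omega,1),L_q)\gg(\mathrm{scale})\cdot d_n(B(\ell_p^N),\ell_q^N)$; inserting the classical lower estimate for this finite-dimensional width when $1\le p\le 2<q<\infty$ \citep[of Kashin--Gluskin--Garnaev type; see e.g.][]{[12],[13]} and optimizing the shell and the cube size matches the upper bound, while on the part where the pointwise bounds $|v|\le r$ already suffice one may instead invoke Lemma \ref{Lemma 1.1}.

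\emph{Main obstacle.} The crux is the finite-dimensional input: establishing $d_m(B(\ell_p^N),\ell_q^N)$ for $1\le p\le 2<q<\infty$ with its sharp joint dependence on $m$ and $N$, and threading it — together with the rescaling factors $\delta_k^{\,l/p}$, $\delta_k^{\,l/q}$ of the local cubes — through the shell-by-shell optimization so that the upper and lower exponents coincide; this is precisely where $1/q-1/2$ and $-1/p+1/2$ arise. Secondary checks are that the local Sobolev embeddings ($s-l/p+l/q>0$, and its analogue with $r$) hold over the whole admissible range, and that the logarithm-free form of Definition \ref{Definition 1.5} leaves the weak-asymptotic orders unchanged.
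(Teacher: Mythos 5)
You should first note that the paper itself contains no proof of Theorem \ref{Theorem 1.6}: it is quoted verbatim from \citet{[24],[19]} as background, so there is no in-text argument to compare yours against. Judged on its own terms, your scheme --- boundary shells on which $d(t,\Gamma)\asymp 2^{-k}$, local tensor-product Chebyshev interpolation of coordinate degree $s-1$, reduction of the per-shell allocation problem to the finite-dimensional widths $d_m(B(\ell_p^{N}),\ell_q^{N})$ with $1\le p\le 2<q$, and a matching bump-function lower bound --- is exactly the standard Babenko--Boykov machinery that the cited works use for Theorems \ref{Theorem 1.4} and \ref{Theorem 1.5}, and it is the right way to see where the corrections $1/q-1/2$ and $-1/p+1/2$ come from.

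However, as written the proposal has a genuine gap: the step you yourself label the ``main obstacle'' is the entire content of the theorem, and it is not carried out. Concretely, (i) you never perform the shell-by-shell optimization that balances the truncation error, the per-shell interpolation errors weighted by $2^{k\gamma}$, and the dimension budget $n\asymp\sum_k m_k$; without it you cannot verify either exponent, nor explain why the phase transition occurs at $v=l/(l-1)$ with the \emph{shifted} parameter $v=(s-l/p+l/q)/(s-l/p+l/q-\gamma)$ rather than the $v=s/(s-\gamma)$ of Theorems \ref{Theorem 1.3} and \ref{Theorem 1.4} --- this shift is itself an output of the balancing and is nowhere derived. (ii) For the lower bound with $q>2$, Lemma \ref{Lemma 1.1} is a sign-alternation argument tailored to the uniform norm and does not by itself deliver the $1/q-1/2$ loss; you must genuinely pass through the lower estimates for $d_m(B(\ell_p^{N}),\ell_q^{N})$, and the reduction from $d_n$ of the full class to $d_m$ of an embedded $\ell_p^{N}$-ball requires checking that the chosen shell supports $N\gg n$ disjoint bumps while the $\sup$-norm constraints of Definition \ref{Definition 1.5} on derivatives of order $\le r$ remain satisfied --- in the regime $v>l/(l-1)$ it is precisely these order-$r$ constraints, not the weighted $L_p$ one, that determine the effective smoothness $r$ in the exponent, and your sketch does not show which constraint binds where. (iii) You do not remark that the statement omits the critical case $v=l/(l-1)$, so your construction should at minimum confirm that the two displayed exponents are attained strictly on either side of it.
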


\begin{rmk}
The articles \citep{[24],[19]} also contain explicit constructions of local splines
which yield the optimal order of approximation error in Theorems \ref{Theorem 1.1}--\ref{Theorem 1.6}.
 Hence these splines 
can be regarded  as optimal methods of approximation in the sense of the Kolmogorov 
and Babenko widths. 
\end{rmk}

In this paper we extend some of these results
to the classes $\bar Q^u_{r\gamma}([-1,1]^l,1)$ and $Q^u_{r\gamma}([-1,1]^l,1),$ $l \ge 1.$

\section{Widths of the classes $\bar Q^u_{r\gamma}([-1,1],1)$ and
$Q^u_{r\gamma}([-1,1],1)$ of functions of one variable}
\label{sec;Sec2}

In this section we estimate the Kolmogorov and Babenko widths 
for each of the functional classes
$\bar Q^u_{r \gamma}(\Omega,1)$ and $Q^u_{r \gamma}(\Omega,1),$
$\Omega =[-1,1].$

\begin{theorem} \rm
\label{Theorem 2.1} 
Let $\Omega = [-1,1].$ Let $ r, u,\gamma$ be
positive integers, $s=r+\gamma.$ Then
$
\delta_n(\bar Q^u_{r \gamma}(\Omega,1)) \asymp
d_n(\bar Q^u_{r \gamma}(\Omega,1),C) \asymp n^{-s}.
$
\end{theorem}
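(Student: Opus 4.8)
The plan is to establish matching upper and lower bounds, each of order $n^{-s}$, and then invoke Lemma~\ref{Lemma 1.2} to pass between $\delta_n$ and $d_n$. Since $\delta_n(X) \le 2d_n(X,C)$ always, it suffices to prove the upper bound $d_n(\bar Q^u_{r\gamma}(\Omega,1),C) \le C_1 n^{-s}$ and the lower bound $\delta_n(\bar Q^u_{r\gamma}(\Omega,1)) \ge C_2 n^{-s}$ (the latter, combined with $\delta_n \le 2d_n$, also forces $d_n \gtrsim n^{-s}$, closing the chain). This mirrors the scheme already used for $Q_{r\gamma}(\Omega,1)$ in Theorem~\ref{Theorem 1.1}, the key new feature being the logarithmic factors $|\ln^u d(t,\Gamma)|$ and $|\ln^{u-1} d(t,\Gamma)|$ in Definition~\ref{Definition 1.6}, which must be shown not to spoil the rate.

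For the upper bound I would build an explicit \emph{local spline} on a graded mesh that refines geometrically toward the endpoints $\pm 1$. Concretely, partition $[-1,1]$ using breakpoints clustering near $\Gamma$; on the finitely many "interior" subintervals (bounded away from $\Gamma$) the function lies in $W^r$ locally, so piecewise polynomial interpolation at scaled Chebyshev nodes gives, via Lemma~\ref{Lemma 1.3}, an error controlled by $\|f^{(r)}\|$ times the subinterval length to a suitable power; on the subintervals approaching $\pm 1$ one uses the bounds on $|D^v f|$ for $r < |v| \le s = r+\gamma$, which behave like $(1 + |\ln^{u-1} d|)\, d^{-(|v|-r)}$. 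The grading is chosen so that the per-interval contributions are balanced: on a subinterval at distance $\sim \delta$ from $\Gamma$ of length $h \sim \delta$, a Taylor expansion of order $s-1$ yields local error $\sim h^s (1+|\ln^{u-1}\delta|)\,\delta^{-\gamma} \sim \delta^r(1+|\ln^{u-1}\delta|)$, which still tends to $0$; near the boundary one instead uses the degree-$(r-1)$ (or degree-$r$) approximation whose error is $\sim h^r(1+|\ln^u\delta|)$. Summing the logarithmic-weighted geometric series over the $O(\ln n)$ dyadic layers and optimizing the number of nodes per layer against the total budget $n$, the extra $\ln$-powers are absorbed (a geometric series with a polylogarithmic factor in the summand still converges at the same rate), and the total approximation error comes out $\asymp n^{-s}$. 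The dimension of the spline space is $O(n)$, so $d_n \lesssim n^{-s}$.

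For the lower bound I would apply Lemma~\ref{Lemma 1.1}: it is enough to exhibit, for each $n$, a choice of $n+1$ points $t_0,\dots,t_n \in [-1,1]$ and an $\epsilon \asymp n^{-s}$ such that every sign pattern $\lambda_i=\pm1$ is realized by some $f_0 \in \bar Q^u_{r\gamma}(\Omega,1)$ with $|f_0(t_i)| \ge \epsilon$. The standard construction is to take $f_0 = \sum_i \lambda_i \varphi_i$ where $\varphi_i$ is a fixed bump supported on a small subinterval $I_i$ around $t_i$, scaled so that it just meets the class constraints: on an interior subinterval of length $h$ one may take a $C^\infty$ bump of height $\sim h^r$ (so that $|\varphi_i^{(r)}| \le 1$), and near the endpoints a bump of height $\sim h^s$ adjusted to the weighted bounds including the $\ln$ factors. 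Choosing the $I_i$ with the same geometric grading as in the upper bound, equispaced-in-scale, gives $n+1$ points with $\epsilon \asymp n^{-s}$; disjointness of supports guarantees each constraint is met pointwise, so $d_n \ge \epsilon \asymp n^{-s}$, hence also $\delta_n \gtrsim n^{-s}$ by Lemma~\ref{Lemma 1.2} in the reverse direction is not available — instead one uses that the same test family gives a lower bound for $\delta_n$ directly (diameter of a fiber of any continuous $\Pi$ into $R^n$ is $\ge 2\epsilon$ by a topological/Borsuk-type argument, which is exactly how Lemma~\ref{Lemma 1.1}'s hypothesis is used in the Babenko setting).

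The main obstacle I anticipate is the bookkeeping for the logarithmic weights in the graded-mesh sum: one must verify that replacing the clean power $d^{-\gamma}$ of the $Q_{r\gamma}$ theory by $(1+|\ln^u d|)$-type weights leaves the geometric series summable with no loss in the exponent $s$, and in particular that the number of mesh layers ($\sim \ln n$) times the worst layer error does not introduce a spurious $\ln n$ or $(\ln n)^u$ factor. This requires choosing the node distribution across layers slightly non-uniformly (putting marginally more nodes in the near-boundary layers to beat the $\ln$-growth) and checking that the total node count stays $O(n)$. Everything else is a routine adaptation of the arguments behind Theorems~\ref{Theorem 1.1} and the spline constructions cited from \citep{[24],[19]}.
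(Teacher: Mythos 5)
Your proposal is correct in substance and its upper bound is essentially the paper's: a mesh graded toward $\pm 1$, local interpolation at scaled Chebyshev nodes of degree $s-1$ away from the boundary, a drop to the degree-$(r-1)$ Taylor/interpolation bound $h^r(1+|\ln^u h|)$ on the cells touching $\Gamma$, and an extra logarithmic refinement of the near-boundary cells (the paper splits $[t_0,t_1]$ into $M_0\asymp(\ln N)^{u/r}$ pieces and the $k$-th layer into $M_k\asymp(\ln(N/k))^{(u-1)/s}$ pieces) to kill the $\ln^u$ and $\ln^{u-1}$ weights without inflating the node count beyond $O(N)$ --- exactly the fix you anticipate, though the paper uses the power grading $t_k=-1+(k/N)^{s/r}$ rather than dyadic layers (both work; the power grading makes the layer count $N\asymp n$ and the per-layer error uniformly $O(N^{-s})$, so no series over layers needs to be summed). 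Where you genuinely diverge is the lower bound: the paper disposes of it in one line via the inclusion $Q_{r\gamma}(\Omega,1)\subset\bar Q^u_{r\gamma}(\Omega,1)$ and the monotonicity of $\delta_n$, quoting Theorem~\ref{Theorem 1.1}, whereas you rebuild it from scratch with disjointly supported bumps, Lemma~\ref{Lemma 1.1}, and a Borsuk-type argument for $\delta_n$. Your route is more self-contained and would also be the one forced on you in the multivariate theorems where the inclusion alone is not sharp (cf.\ Theorems~\ref{Theorem 3.2} and \ref{Theorem 3.6}), but here it is unnecessary work; the paper's inclusion argument buys the same constant-free bound instantly. You also correctly handle the one logical trap --- that Lemma~\ref{Lemma 1.2} transfers lower bounds from $\delta_n$ to $d_n$ but not conversely --- which is the same chain of implications the paper uses to close the two-sided estimate.
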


\begin{proof}
First we estimate the infimum for $\delta_n(\bar Q^u_{r \gamma}(\Omega,1)).$ 
Note $Q_{r \gamma}(\Omega,1) \subset \\ \bar Q^u_{r \gamma}(\Omega,1).$
By Theorem \ref{Theorem 1.1} we know
$\delta_n(Q_{r \gamma}(\Omega,1)) \asymp n^{-s}.$ Therefore
\begin{equation}
\delta_n(\bar Q^u_{r \gamma}(\Omega,1)) \geq
\delta_n(Q_{r \gamma}(\Omega,1)) \asymp n^{-s}.
\label{(2.1)}
\end{equation}

To construct a continuous local spline with $n$ parameters that
approximates the functions of $\bar Q^u_{r \gamma}(\Omega,1)$ with
the accuracy $An^{-s}$, we study two cases:
i) $u=1,$ and ii) $u>1.$

{ i).} Let $u=1.$  We divide the interval $[-1,1]$ into $2N$
subintervals by the points $t_k = -1+(k/N)^v$ and $\tau_k = 1-(k/N)^v,$
 $k=0,1,\ldots,N,$  $v= s/(s-\gamma).$
Then divide the obtained interval $[t_0, t_1]$ into
$M (M=\lceil\ln N\rceil)$ subintervals by the points $t_{0,j} = t_0 +(t_1-t_0)j/M,$
$j =0,1,\ldots,M.$ Continuing the partition procedure we subdivide $[\tau_1, \tau_0]$
into $M$ subintervals by $\tau_{0,j} = \tau_0 - (\tau_0 - \tau_1)j/M,$
$j=0,1,\ldots,M.$

For each interval $[a,b]$ we now choose a polynomial $P_s(f,[a,b])$
interpolating $f(t) \in \bar Q^u_{r\gamma}(\Omega,1)$ at the endpoints $a$ and $b$ 
in the following way.
Denote  the zeros of the Chebyshev polynomial of the first kind
of degree $s$ by $\zeta_k,$ $k=1,2,\ldots,s.$ Map
$[\zeta_1,\zeta_s] \subset [-1,1]$  with an affine-linear  transformation 
onto $[a,b]$ so that the points $\zeta_1$
and $\zeta_s$ are mapped onto $a$ and $b$ respectively.
Denote the images of the points $\zeta_i$ under this mapping by
$\zeta'_i,$ $i=1,2,\ldots,s.$
We then denote by $P_s(f,[a,b])$ the interpolation polynomial of degree $s-1$ 
with respect to the nodes $\zeta'_i,$ $i=1,2,\ldots,s.$

Now define the function $f_N$ on $[-1,1]$ to piecewise consist of the polynomials 
$P_s(f,[t_{0,j}, t_{0,j+1}]),$ 
$P_s(f,[t_k,t_{k+1}]),$ $P_s(f,[\tau_{k+1},\tau_{k}]),$
$P_s(f,[\tau_{0,j+1},\tau_{0,j}]),$ 
 $j=0,1,\ldots,M-1,$ $k=1,2,\ldots,N-1.$

With these definitions in hand, we can now estimate the pointwise approximation error
$\|f-f_N\|.$ On the intervals  $\Delta^1_k = [t_k, t_{k+1}],$ $k=1,2,\ldots,N-1$ 
we obtain 
\begin{equation}
\|f-f_N\|_{C(\Delta_k^1)}\le  E_{s-1}(f,\Delta_k^1) (1+\lambda_s)\leq c\|f-T_{s-1}(f,\Delta_k^1,t_k)\|\lambda_s \le 
\frac{c (t_{k+1}-t_k)^s}{(k/N)^{v\gamma}s!} = cN^{-s},
\label{(2.2)}
\end{equation}
where  $\lambda_s$ is the Lebesgue constant with respect to the nodes used for 
$P_s(f,\Delta_{k}^1),$  $E_s(f,[a,b])$ is the best approximation of
the function $f$ by polynomials of degree at most $s$ in the norm of $C[a,b],$

Similar estimates are true in  $\Delta^2_k=[\tau_{k+1},\tau_k],$
$k=1,2,\ldots,N-1.$

Throughout this paper, we denote the constants that do not depend on $N$ by $c$.

Now let $k=0.$ 
It is well-known that it holds
$\|f-f_N\|_{C(\Delta_{0,0}^1)} \leq  E_{s-1}(f,\Delta_{0,0}^1)(1+\lambda_s),
$\\ 
where $\Delta_{0,0}^1=[t_{0,0}, t_{0,1}].$
Here  $\lambda_s$ is the Lebesgue constant with respect to the nodes used for 
$P_s(f,\Delta_{0,0}^1).$ 

Using Taylor's expansion  $T_{r-1}(f,\Delta^1_{0,0},-1)$ with the remainder in 
integral form we find
\begin{equation*}
E_{s-1}(f,\Delta^1_{0,0}) \leq \|f - T_{r-1}(f,\Delta^1_{0,0},-1)\|_{C(\Delta_{0,0})} 
\leq \frac{1}{(r-1)!}
\max\limits_{t\in\Delta^1_{00}}|\int\limits^t_{-1}f^{(r)}(\tau)(t-\tau)^{r-1}d\tau| \leq
\end{equation*}
\begin{equation*}
\leq \frac{1}{(r-1)!}
\max\limits_{t\in\Delta^1_{00}} \int\limits^t_{-1}(1+|\ln(1+\tau)|)(t-\tau)^{r-1}d\tau \leq
\frac{c}{r!}(h^r_{0,0}|\ln h_{00}| + h^r_{00})\leq c\frac{1}{N^s \ln^{r-1}N},
\end{equation*}
where $h_{0,k}=|t_{0,k+1} - t_{0,k}|,$ $k=0,1,\ldots,M-1.$

Therefore,
$
E_{s-1}(f,\Delta^1_{0,0}) \leq c N^{-s} \ln^{r-1}N.
$

Since the Lebesgue constant  $\lambda_s$ is independent of $N$ due to its invariance 
under rescaling we finally obtain
\begin{equation}
\|f -f_N\|_{C(\Delta_{0,0}^1)} \leq
c/(N^s \ln^{r-1}N).
\label{(2.3)}
\end{equation}

One can obtain a similar estimate in $\Delta^2_{0,0}=[\tau_{0,1},\tau_{0,0}].$

With similar arguments we obtain on   $\Delta_{0,j}^1=[t_{0,j},t_{0,j+1}], \ 1 \leq j \leq M-1$ 
\begin{equation}
\|f -f_N\|_{C(\Delta_{0,j}^1)} \leq
\frac{c \lambda_s}{(1+t_{0,j})^\gamma}h^s_{0,j} 
\leq \frac{c \lambda_{s}}{j^\gamma}(N^v \ln N)^\gamma
\left(\frac{1}{N^v \ln N}\right)^s\le  \frac{c}{N^s \ln^r N}.
\label{(2.4)}
\end{equation} 

Similar estimates are true in
$\Delta^2_{0j}=[\tau_{0,j+1},\tau_{0,j}],$ $j=1,2,\ldots,M-1.$

The total number $n$ of nodes used in constructing the local spline equals
$n=2(([ln N]-1)(s-1)+(N-1)(s-1))+1.$ Therefore $N\sim n/(2(s-1)).$  
 
It follows from (\ref{(2.2)}) -- (\ref{(2.4)}) that we have constructed a continuous local spline
$f_N$ which approximates $f \in \bar Q^1_{r\gamma}(\Omega,1)$ with the
accuracy $O(n^{-s}).$
From Definition \ref{Definition 1.1}, it follows that
\begin{equation}
d_n(\bar Q^1_{r\gamma}(\Omega,1),C) \leq cn^{-s}.
\label{(2.5)}
\end{equation}

Using Lemma \ref{Lemma 1.2} and the inequalities (\ref{(2.1)}), (\ref{(2.5)}) we complete the proof of the
theorem for $u=1.$
We now consider 

{ii)} $u \geq 2.$
We use the same points $t_k$ and $\tau_k$ and intervals $\Delta_k^1$ and $\Delta_k^2$ as before.
Additionally put  $M_0=\lceil\ln^{u/r}N\rceil,$ and  $M_k=\lceil\ln^{(u-1)/s}(N/k)\rceil,$
$k=1,\ldots,N-1.$ 
Divide each  $\Delta_k^1$ and $\Delta_k^2$  into $M_k,$ $k=0,1,\ldots,N-1,$ equal subintervals
and denote the latter ones by $\Delta^i_{k,j},$ $i=1,2,$
$j=0,1,\ldots,M_k-1,$ $k=0,1,\ldots,N-1.$

We shall approximate $f \in \bar Q^u_{r\gamma}(\Omega,1)$ within each $\Delta^i_{k,j}$
by the interpolating polynomial $P_s(f,\Delta^i_{k,j}),$ $i=1,2,$
$j=0,1,\ldots,M_k-1,$ $k=0,1,\ldots,N-1.$
Let $f_N$ be the spline composed of the polynomials $P_s(f,\Delta^i_{k,j}).$

Starting with approximating the error in $\Delta^i_0,$ we have in $\Delta^i_{0,0}$ 
$
\|f - f_N\|_{C(\Delta^1_{0,0})} \leq  E_{s-1}(f,\Delta^1_{0,0})(1+\lambda_s).
$

Using  Taylor's expansion $T_{r-1}(f, \Delta^1_{0,0}, -1)$ we find
\begin{equation*}
E_{s-1}(f,\Delta^1_{0,0}) \leq
\|f - T_{r-1}(f, \Delta^1_{0,0}, -1)\|_{C(\Delta_{0,0})} 
\leq \frac{1}{(r-1)!}\max\limits_{t \in \Delta^1_{0,0}}
\left|\int\limits^t_{-1} f^{(r)}(\tau)(t-\tau)^{r-1}d\tau\right| 
\end{equation*}
\begin{equation*}
\leq \frac{1}{(r-1)!}\max\limits_{t \in \Delta^1_{0,0}}
\left|\int\limits^t_{-1}(1+|\ln^u(1 + \tau)|)(t-\tau)^{r-1}d\tau\right| 
\leq ch^r_{00}|\ln^u h_{00}| \le
\end{equation*}
\begin{equation*}
\leq c\left(\frac{1}{N^v M_0}\right)^r
\left|\ln^u \left(\frac{1}{N^v M_0}\right)\right| \leq
 c\frac{1}{N^s \ln^u N}(\ln^u N + \ln^u \ln N) \leq c\frac{1}{N^s},
\end{equation*}
where $h_{00} = h_0/M_0,$ $h_0 = t_1 - t_0.$

Finally, 
$\|f - f_N\|_{C(\Delta^i_0)} \leq c N^{-s}$  in $\Delta^i_0, i=1,2.$

Our next step is to obtain the  estimate of $\|f - f_N\|_{C(\Delta^i_{kj})},$
 $j=0,1,\ldots,M_k-1,$ $i=1,2,$ $k=1,2,\ldots,N-1.$
It is obvious that
\begin{equation*}
\|f - f_N\|_{C(\Delta^1_{k,1})} \leq
\frac{c(t_{k,1} - t_{k,0})^s}{s!}
\left(\frac{N}{k}\right)^{v\gamma}\left(1+\left|\ln^{u-1}
\left(\frac{N}{k}\right)^{v\gamma}\right|\right) 
\end{equation*}
\begin{equation*}
\leq c\left(\frac{h_k}{M_k}\right)^s \left(\frac{N}{k}\right)^{v\gamma}
\left(1+\ln^{u-1}\frac{N}{k}\right) \leq
\end{equation*}
\begin{equation*}
\leq c\left(\left(\left(\frac{k+1}{N}\right)^v-\left(\frac{k}{N}\right)^v\right)
\frac{1}{\left(\ln \frac{N}{k}\right)^{(u-1)/s}}\right)^s
\left(\frac{N}{k}\right)^{v\gamma}\left(1+\ln^{u-1}\frac{N}{k}\right) 
\end{equation*}
\begin{equation*}
\leq c\frac{(k+\theta)^{(v-1)s-v\gamma}}{N^s} \leq \frac{c}{N^s},
\end{equation*}
where $t_{k,j}=t_k + (t_{k+1}-t_k)j/M_k, j=0,1,\cdots,M_k, k=0,1,\cdots,N-1.$

The errors 
$
\|f - f_N\|_{C(\Delta^1_{k,j})}, \quad \|f - f_N\|_{C(\Delta^2_{k,j})},  \quad
j=1,2,\ldots,M_k-1, \quad k=1,2,\ldots,N-1,
$
can be estimated similarly.

Combining the estimates obtained above we have
$
\|f - f_N\|_{C([-1,1])} \leq c N^{-s}.
$

Next we need to estimate the number of nodes  used to construct the local spline $f_N.$
For this purpose, we first estimate the number $m$ of subintervals $\Delta^i_{k,j},$ $i=1,2,$
$j=0,1,\ldots,M_k-1,$ $k=0,1,\ldots,N-1.$

Let $q=(u-1)/s.$ Then
\begin{equation*}
m=2\sum\limits^{N-1}_{k=0}M_k \leq
2\left(\ln^{\frac{u}{r}} N+ \sum\limits^{N-1}_{k=1} \ln^{\frac{u-1}{s}}
\frac{N}{k}\right) 
\leq c\left(N + \sum\limits^{N-1}_{k=2}
\ln^q \frac{N}{k}\right)
\end{equation*}
\begin{equation*}
\le c\left(N + N\int\limits^N_1 \frac{\ln^q t}{t^2} dt \right) \leq c N.
\end{equation*} 

Therefore the total number of nodes used in constructing 
$f_N$ equals to $n=(s-1)m+1=cN.$
Hence,
$
\|f - f_N\|_{C([-1,1])} \leq c N^{-s} \leq c n^{-s}.
$
 
Thus,
$
d_n(\bar Q_{r\gamma}^u(1),C) \le c n^{-s}.
$

Comparing the preceding inequality to the estimate (\ref{(2.1)}) we complete the proof of the theorem for
$u \geq 2.$
\end{proof}

\begin{theorem}
\label{Theorem 2.2}  \rm
Let $\Omega=[-1,1]$. Let $r,u$ be positive integers,  and $\gamma$ be a positive non-integer.
The estimate 
$
\delta_n(Q^u_{r\gamma}(\Omega,1))\asymp d_n(Q^u_{r\gamma}(\Omega,1), C)\asymp n^{-s}
$
holds, where as usual $s=r+\lceil\gamma\rceil.$
\end{theorem}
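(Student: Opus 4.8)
The plan is to follow exactly the two-sided scheme used for Theorem~\ref{Theorem 2.1}, adapting it to the non-integer $\gamma$ and the modified growth condition in Definition~\ref{Definition 1.7}. For the lower bound, I would note that $Q_{r\gamma}(\Omega,1)\subset Q^u_{r\gamma}(\Omega,1)$ (a representative of $Q_{r\gamma}$ trivially satisfies the logarithmically weakened bound of $Q^u_{r\gamma}$), so by Theorem~\ref{Theorem 1.1} and the monotonicity of widths under set inclusion,
\begin{equation*}
\delta_n(Q^u_{r\gamma}(\Omega,1))\geq\delta_n(Q_{r\gamma}(\Omega,1))\asymp n^{-s},\qquad d_n(Q^u_{r\gamma}(\Omega,1),C)\geq d_n(Q_{r\gamma}(\Omega,1),C)\asymp n^{-s}.
\end{equation*}
This immediately gives the $\gtrsim n^{-s}$ half of both $\asymp$ statements, and Lemma~\ref{Lemma 1.2} then reduces the whole theorem to an upper bound $d_n(Q^u_{r\gamma}(\Omega,1),C)\le cn^{-s}$.

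For the upper bound I would construct a local spline with $n$ parameters, splitting into the cases $u=1$ and $u\ge2$ just as before. The key difference from Theorem~\ref{Theorem 2.1} is that now $f\in C^r(\Omega)$ (not merely $C^{r-1}$) and the critical exponent in the boundary layer is $|v|-r-\zeta$ with $\zeta=\lceil\gamma\rceil-\gamma\in(0,1)$, so the appropriate mesh is $t_k=-1+(k/N)^v$, $\tau_k=1-(k/N)^v$ with $v=s/(s-\gamma)$ exactly as in the $Q_{r\gamma}$ case. On the bulk intervals $\Delta_k^1=[t_k,t_{k+1}]$, $1\le k\le N-1$, I would use the degree-$s-1$ interpolation polynomial at the rescaled Chebyshev nodes, estimate $\|f-f_N\|_{C(\Delta_k^1)}\le E_{s-1}(f,\Delta_k^1)(1+\lambda_s)$ via a Taylor expansion of order $s-1$ about an interior point, and bound the remainder using $|f^{(s)}(t)|\le c(1+|\ln^u d(t,\Gamma)|)(d(t,\Gamma))^{-(s-r-\zeta)}$; since $d(t,\Gamma)\asymp(k/N)^v$ on $\Delta_k^1$ and $(t_{k+1}-t_k)\asymp (k/N)^{v-1}N^{-1}$, the power-of-$k$ bookkeeping collapses to $cN^{-s}$ just as in \eqref{(2.2)}, with the extra logarithmic factor harmlessly absorbed. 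For the boundary intervals $[t_0,t_1]$ and $[\tau_1,\tau_0]$ I would subdivide into $M=\lceil\ln N\rceil$ (when $u=1$) or use the refined counts $M_0=\lceil\ln^{u/r}N\rceil$, $M_k=\lceil\ln^{(u-1)/s}(N/k)\rceil$ (when $u\ge2$), exactly mirroring the two cases of the previous proof, and on the innermost cell use a Taylor expansion of order $r-1$ about the endpoint $\pm1$ with integral remainder, where now $|f^{(r)}(\tau)|\le1$ on all of $\Omega$ so that cell contributes $\le c h_{00}^r\le cN^{-s}$ trivially.

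The node count is handled as before: the number of subintervals is $O(N)$ because $\sum_{k=1}^{N-1}\ln^{(u-1)/s}(N/k)\le cN\int_1^N t^{-2}\ln^{(u-1)/s}t\,dt\le cN$, so $n\asymp N$ and hence $\|f-f_N\|_{C([-1,1])}\le cN^{-s}\le cn^{-s}$, yielding $d_n(Q^u_{r\gamma}(\Omega,1),C)\le cn^{-s}$. Combined with the lower bound and Lemma~\ref{Lemma 1.2}, this completes the proof. The one point requiring genuine care — rather than verbatim copying — is the boundary-layer estimate on the cells $\Delta_{0,j}^1$ for $1\le j\le M-1$: here $f$ is only $C^r$ with the weighted bound on $f^{(s)}$, and since $\zeta\in(0,1)$ is non-integer one must check that $(1+t_{0,j})^{-(s-r-\zeta)}=(1+t_{0,j})^{-(\gamma-\zeta+\lceil\gamma\rceil-\gamma)}\cdot(\cdots)$ — i.e.\ the exponent is $\gamma+\zeta-\lceil\gamma\rceil$ wait, $s-r-\zeta=\gamma$ is wrong; rather $s-r=\lceil\gamma\rceil$ so $s-r-\zeta=\lceil\gamma\rceil-\zeta=\gamma$, so the effective weight exponent is exactly $\gamma$, the same as in the $Q_{r\gamma}$ analysis, and the estimate \eqref{(2.4)} carries over with only the cosmetic replacement of $\lceil\gamma\rceil$ for $\gamma$ in the definition of $s$. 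So the main ``obstacle'' is really just bookkeeping: confirming that $s-r-\zeta=\gamma$ makes the non-integer case formally identical to the integer case once the logarithmic refinement of the mesh is in place.
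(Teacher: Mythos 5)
Your lower bound (inclusion $Q_{r\gamma}\subset Q^u_{r\gamma}$ plus Theorem \ref{Theorem 1.1} and Lemma \ref{Lemma 1.2}) and your bulk-interval analysis for $1\le k\le N-1$ match the paper. But there is a genuine gap at the innermost cell, and it is precisely the one place where the non-integer case is \emph{not} "formally identical" to Theorem \ref{Theorem 2.1}. You propose to handle $\Delta^1_{0,0}$ with a Taylor expansion of order $r-1$ and the bound $|f^{(r)}|\le 1$, claiming the cell contributes $\le c\,h_{00}^r\le cN^{-s}$ "trivially". This fails: here $v=s/(s-\gamma)=s/(r+\zeta)$ with $\zeta\in(0,1)$, so $vr=sr/(r+\zeta)<s$ strictly, and $h_{00}^r\asymp N^{-vr}(\ln N)^{-ru/r}$ is polynomially larger than $N^{-s}$ (e.g.\ $r=1$, $\gamma=1/2$ gives $h_{00}^r\asymp N^{-4/3}$ versus the required $N^{-2}$), and no polylogarithmic refinement of $M_0$ can close a polynomial gap. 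The paper instead expands to order $r$, using the integral remainder with $|f^{(r+1)}(\tau)|\le (1+|\ln^u(1+\tau)|)(1+\tau)^{-\mu}$, $\mu=1-\zeta<1$; the singularity is integrable and yields $E_{s-1}(f,\Delta^1_{0,0})\le c\,h_{00}^{r+1-\mu}|\ln^u h_{00}|=c\,h_{00}^{r+\zeta}|\ln^u h_{00}|$, and the identity $v(r+\zeta)=s$ is exactly what produces $cN^{-s}$. This order-$r$ expansion with the weighted bound on $f^{(r+1)}$ is the essential new computation of Theorem \ref{Theorem 2.2}, and your write-up omits it; your closing "bookkeeping" paragraph, which talks itself into believing the case is identical to the integer one, is where the error is made.

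A secondary mismatch: you import the subdivision counts $M_0=\lceil\ln^{u/r}N\rceil$, $M_k=\lceil\ln^{(u-1)/s}(N/k)\rceil$ from the class $\bar Q^u_{r\gamma}$, where the derivatives of orders $r+1,\dots,s$ carry $\ln^{u-1}$. In $Q^u_{r\gamma}$ (Definition \ref{Definition 1.7}) they carry $\ln^u$, so the exponent must be $u/s$ (the paper takes $M_k=\lceil\ln^{u/s}N\rceil$ for $k\ge1$ and $M_0=\lceil\ln^{u/(r+1-\mu)}N\rceil$ to match the $h_{00}^{r+1-\mu}$ exponent above); with $(u-1)/s$ an uncancelled factor $\ln(N/k)$ survives on cells near the boundary, so the claimed $cN^{-s}$ bound there degrades to $cN^{-s}\ln N$. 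Both exponents need to be corrected before the node count and the final bound $d_n\le cn^{-s}$ go through.
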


\begin{proof} First, we  estimate the infimum for
$\delta_n(Q^u_{r\gamma}(\Omega,1)).$  Note
$Q_{r\gamma}(\Omega,1) \subset \\ Q^u_{r\gamma}(\Omega,1). $ By Theorem \ref{Theorem 1.1},
the inequality $\delta_n(Q_{r\gamma}(\Omega,1))\geq cn^{-s}$ holds. Therefore
$\delta_n(Q^u_{r\gamma}(\Omega,1))\geq cn^{-s}.$ 
Next, we construct a local spline $f_N$
with at most $cn$ nodes approximating the given function $f \in Q^u_{r\gamma}(\Omega,1)$
with  error at most $cn^{-s}.$

We use the same subdivision procedure into intervals $\Delta^1_{k,j} = [t_{k,j}, t_{k,j+1}]$ 
and $\Delta^2_{k,j} = [\tau_{k,j+1}, \tau_{k,j}]$ as in the proof of Theorem \ref{Theorem 2.1}.
Here $M_0=\lceil\ln^{u/(r+1-\mu)} N\rceil, M_k=\lceil\ln ^{u/s}N\rceil, k=1,\cdots,N-1, \mu=1-\zeta.$

We approximate a function $f$ on $[-1,1]$ by the spline  $f_N$ which
piecewise consists of the polynomials
 $P_s(f,\Delta_{k,j}^i),$
$i=1,2,$ $j=0,1,\cdots,M_k-1,$  $k=0,1,\ldots,N-1.$

Estimating the error $\|f-f_N\|$ we obtain  for  $j=0,\cdots,M_k-1,  1\leq k\leq N-1$
\begin{equation*}
||f-f_N||_{C(\Delta^1_{k,j})}\leq \frac{c (t_{k+1}-t_k)^s}{M_k^ss!}\left( \frac{N}{k} \right)^{v\gamma}
\left(1+\left|\ln^u\left(\frac{k}{N} \right)^{v}\right|\right )\leq
\end{equation*}
\begin{equation*}
\leq c\left(\left(\left( \frac{k+1}{N}\right )^v- \left( \frac{k}{N}\right )^v\right)
\frac{1}{M_k}\right)^s\left( \frac{N}{k} \right)^{v\gamma}\left(1+ \left|\ln^u\left(\frac{N}{k} \right)^{v}\right|  \right)\leq\frac{c}{N^s}.
\end{equation*} 

 One can derive similarly
$ ||f-f_N||_{C(\Delta^2_{k,j})}\leq c/N^s,$
$k=1,2,\ldots,N-1. $

For $k=0$, the following estimate is true:
$ ||f-f_N||_{C(\Delta^1_{0,0})}\leq E_{s-1}(f,\Delta^1_{0,0})(1+\lambda_{s}).$

Using the Taylor expansion $T_r(f,\Delta^1_{0,0},-1)$ we  have
\begin{equation*}
E_{s-1}(f,\Delta^1_{0,0})\leq||f-T_r(f,\Delta^1_{0,0},-1)||_{C(\Delta^1_{0,0})} \leq
\end{equation*}
\begin{equation*}
\leq \frac{1}{r!}\max_{t\in \Delta^1_{0,0}}\int\limits_{-1}^t \frac{(1+|\ln^u(1+\tau)|)}
{(1+\tau)^\mu}(t-\tau)^r d\tau 
\leq ch^{r+1-\mu}_{00}|\ln^uh_{00}|\leq cN^{-s}.
\end{equation*} 
Recall $h_{00}=N^{-v}/M_0\le c N^{-v} (ln N)^{-u/(r+1-\mu)}.$
Hence, $||f-f_N||_{C(\Delta^1_{0,0})} \leq cN^{-s}.$ 

One can estimate the norms  $||f-f_N||_{C(\Delta^1_{0,j})},$ 
and $||f-f_N||_{C(\Delta^2_{0,j})},$ \ \ $j=0,1,\ldots,M_0-1$ in a similar way.

Thus, we have obtained 
\begin{equation}
||f-f_N||_{C(\Omega)}\leq cN^{-s}.
\label{(2.6)}
\end{equation}

It remains to estimate the number  $n$ of nodes of the local spline $f_N.$
Repeating the arguments used
in the proof of the preceding theorem we derive $n \asymp N$ since the specific value of $q$
did not enter in this calculation.
Thus, we have 
$\|f-f_N\|_{C(\Omega)}\leq cn^{-s}$ and 
$d_n(Q^r_{r\gamma}(\Omega,1),C) \le cn^{-s}.$
Comparing the last inequality with the estimate
$\delta_n(Q^r_{r\gamma}(\Omega,1)) \ge cn^{-s}$
we complete the proof of Theorem.
\end{proof}

\section{Widths of the classes  $\bar Q^u_{r \gamma}(\Omega,1)$ and
$Q^u_{r \gamma}(\Omega),1), \Omega = [-1,1]^l.$ }
\label{sec;Sec3}

In this section we estimate the Kolmogorov and Babenko widths 
for each of the functional classes
$\bar Q^u_{r \gamma}(\Omega,1)$ and $Q^u_{r \gamma}(\Omega,1),$
$\Omega =[-1,1]^l,$ $l=2,3,\ldots.$

\begin{theorem} \rm
\label{Theorem 3.1}  Let $\Omega = [-1,1]^l,$ $l \geq 2,$
$ u=1,2,\cdots,$ $v=s/(s-\gamma).$
The following estimates hold
\begin{equation}
\delta_n(\bar Q^u_{r \gamma}(\Omega,1)) \asymp d_n(\bar Q^u_{r \gamma}(\Omega,1),C) \asymp
n^{-s/l} 
\label{(3.1)}
\end{equation}
$ {\rm if} \quad   v<l/(l-1),$
\begin{equation}
c n^{-s/l}(\ln n)^{u-1+s/l} \le \delta_n(\bar Q^u_{r \gamma}(\Omega,1))
 \le 2d_n(\bar Q^u_{r \gamma}(\Omega,1),C) \leq
\label{(3.2)}
\end{equation}
\begin{equation}
\leq c
\begin{cases}
n^{-s/l}(\ln n)^{us/r},  & u/r \geq 1/l+(u-1)/s, \\ 
n^{-s/l}(\ln n)^{u-1+s/l},  & u/r \leq 1/l+(u-1)/s
\end{cases}
\label{(3.3)}
\end{equation}
$
{\rm if} \quad  v=l/(l-1).
$
\end{theorem}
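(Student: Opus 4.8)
The plan is to establish the theorem in two parts, treating lower bounds and upper bounds separately, and in each case distinguishing the regimes $v<l/(l-1)$ and $v=l/(l-1)$.

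\medskip

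\emph{Lower bounds.} First I would use the embedding $Q_{r\gamma}(\Omega,1)\subset\bar Q^u_{r\gamma}(\Omega,1)$ together with Theorem~\ref{Theorem 1.3} to get the trivial lower bound $\delta_n(\bar Q^u_{r\gamma}(\Omega,1))\ge c\,\delta_n(Q_{r\gamma}(\Omega,1))$, which already gives $n^{-s/l}$ when $v<l/(l-1)$ and $n^{-s/l}(\ln n)^{s/l}$ when $v=l/(l-1)$. For the sharper factor $(\ln n)^{u-1+s/l}$ in the critical case, this is not enough, so I would build an explicit family of "bump" functions: on the boundary layer where $d(t,\Gamma)\sim\rho$, the defining inequalities of $\bar Q^u_{r\gamma}$ permit an $r$-th derivative of size $\sim|\ln\rho|^u$ and an $s$-th derivative of size $\sim|\ln\rho|^{u-1}\rho^{-\gamma}$. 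I would partition a boundary neighborhood into dyadic-in-$\rho$ shells, and within each shell into cells of the width dictated by the $Q_{r\gamma}$-type scaling, place on each admissible cell a scaled copy of a fixed $C^\infty$ bump supported in that cell, with amplitude as large as the definition allows. Counting cells against the allotted $\ln\rho$-subdivision (the $M$-type refinement from the one-dimensional proof) shows that with $n$ parameters one can force sign-alternation on $\sim n$ points where the functions are bounded below by $c n^{-s/l}(\ln n)^{u-1+s/l}$; then Lemma~\ref{Lemma 1.1} gives $d_n\ge$ that quantity, and Lemma~\ref{Lemma 1.2} transfers it to $\delta_n$ (up to the harmless factor~2, which is why the statement is phrased with $\le 2d_n$).

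\medskip

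\emph{Upper bounds.} Here I would mimic the construction in the proofs of Theorems~\ref{Theorem 2.1} and~\ref{Theorem 1.3}: triangulate $\Omega=[-1,1]^l$ by a tensor/recursive product of the one-dimensional partition, using points $t_k=-1+(k/N)^v$ (and the symmetric ones) in each coordinate near each face, then subdivide the outermost "logarithmic'' layers into $M_0=\lceil\ln^{u/r}N\rceil$ (or the $Q^u$-analogue $M_0=\lceil\ln^{u/(r+1-\mu)}N\rceil$) pieces and the $k$-th layer into $M_k=\lceil\ln^{(u-1)/s}(N/k)\rceil$ pieces, exactly as in the $l=1$ case, and take tensor products across coordinates; on each resulting box use the interpolating polynomial $P_s(f,\cdot)$ of degree $s-1$ at Chebyshev-type nodes. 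The cellwise error estimate is the multivariate version of \eqref{(2.2)}--\eqref{(2.4)}: on a generic cell the bound is $c N^{-s}$, and the logarithmic factors coming from $\ln^u$ or $\ln^{u-1}$ are absorbed by the choice of $M_k$, giving $\|f-f_N\|_{C(\Omega)}\le cN^{-s}$. The count of parameters is the multivariate analogue of the $m=2\sum M_k\le cN$ computation, but now with the product structure: away from the critical case ($v<l/(l-1)$) the number of cells is $\asymp N^l$, giving $n\asymp N^l$ and hence $d_n\le cn^{-s/l}$, matching the lower bound and yielding \eqref{(3.1)}. In the critical case $v=l/(l-1)$ the cell count picks up an extra logarithmic factor from the near-boundary refinement, $n\asymp N^l(\ln N)^{\alpha}$ with $\alpha$ depending on which of $u/r$ and $1/l+(u-1)/s$ is larger — this is precisely the source of the two cases in \eqref{(3.3)} — and inverting $N\mapsto n$ produces $n^{-s/l}(\ln n)^{us/r}$ or $n^{-s/l}(\ln n)^{u-1+s/l}$ respectively.

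\medskip

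The main obstacle I anticipate is the bookkeeping in the critical regime $v=l/(l-1)$: one must carefully balance two competing refinements — the $(k/N)^v$ grading toward the boundary (which in the critical case already makes $\sum_k M_k^{\,\text{coarse}}$ logarithmically larger than $N$) against the extra $\ln$-subdivisions $M_0,M_k$ forced by the $\ln^u$ growth — and show that the optimal trade-off is governed exactly by the threshold $u/r\lessgtr 1/l+(u-1)/s$. Getting the exponent of $\ln n$ to come out as stated in both sub-cases, and checking that the lower-bound bump construction realizes the larger of the two (so that the lower bound $cn^{-s/l}(\ln n)^{u-1+s/l}$ is consistent with, but generally weaker than, the upper bound), will require the most care; the per-cell polynomial-approximation estimates themselves are routine adaptations of Lemma~\ref{Lemma 1.3} and the one-dimensional arguments already carried out.
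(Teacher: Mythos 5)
Your upper-bound half is essentially the paper's argument: the same $(k/N)^v$-graded shells, the same extra subdivisions $M_0=\lceil(\ln N)^{u/r}\rceil$, $M_k=\lceil(\ln(N/k))^{(u-1)/s}\rceil$, tensor-product interpolation $P_{s,\ldots,s}$ with per-cell error $cN^{-s}$, and the node count $n\asymp N^l$ for $v<l/(l-1)$ versus $n\asymp N^l(\ln N)^{\alpha}$ with the threshold $lu/r\lessgtr 1+(u-1)l/s$ for $v=l/(l-1)$; this reproduces (\ref{(3.1)})--(\ref{(3.3)}) on the upper side. The only structural difference is that the paper partitions by distance to $\Gamma$ into shells and then tiles each shell by boxes (plus a separate pass to enforce continuity of the spline), rather than tensoring the one-dimensional partition, but this does not change the counts.

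The lower-bound half has a genuine logical gap. You propose to get sign-alternating bumps, apply Lemma~\ref{Lemma 1.1} to obtain $d_n\ge c\,n^{-s/l}(\ln n)^{u-1+s/l}$, and then ``transfer'' this to $\delta_n$ via Lemma~\ref{Lemma 1.2}. But Lemma~\ref{Lemma 1.2} states $\delta_n(X)\le 2d_n(X,B)$: it bounds $\delta_n$ from \emph{above} by $d_n$, so a lower bound on $d_n$ yields no information about $\delta_n$. The inequality $c\,n^{-s/l}(\ln n)^{u-1+s/l}\le\delta_n$ in (\ref{(3.2)}) must be proved directly. This matters precisely in the critical case $v=l/(l-1)$ with $u\ge 2$, since for $u=1$ (and for all $u$ when $v<l/(l-1)$) the embedding $Q_{r\gamma}\subset\bar Q^u_{r\gamma}$ together with Theorem~\ref{Theorem 1.3} already gives the stated $\delta_n$ lower bound. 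The paper closes this gap by building the explicit polynomial bumps $\varphi^k_{i_1,\ldots,i_l;j_1,\ldots,j_l}$ in (\ref{(3.16)})--(\ref{(3.17)}) on a partition with $M_k=\lceil(\ln(N/k))^{(u-1)/s}\rceil$ (note: $(u-1)/s$ also for $k=0$, not $u/r$, which is what produces the count $n\asymp N^l(\ln N)^{(u-1)l/s+1}$ and hence the exponent $u-1+s/l$), verifying that each bump has amplitude $\ge cN^{-s}$ while respecting the derivative constraints of $\bar Q^u_{r\gamma}$, and then estimating $\delta_n$ from below on the span of these bumps by the Borsuk-type argument of \cite{[2],[3],[24]}; the Kolmogorov lower bound then follows separately from the same family via Lemma~\ref{Lemma 1.1} (and is recorded in the paper only as a remark). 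Your bump construction is the right object, but you must run the Babenko-width argument on it directly rather than invoking Lemma~\ref{Lemma 1.2} in the wrong direction.
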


\begin{proof}
We start to estimate the Kolmogorov widths.
First, we construct a local spline not necessarily continuous which approximates the functions 
of the classes $\bar Q^u_{r \gamma}(\Omega,1)$ for $v\le l/(l-1)$  and
has  the error  given in the right-hand side of (\ref{(3.1)}) $-$ (\ref{(3.3)}). 
Afterwards we construct a  continuous local spline having the same error of approximation. 
This requires some modifications which we indicate below.

Let $\Delta^k$ denote the set 
\begin{equation*}
\Delta^k =\left\{ t \in \Omega:\left( \frac{k}{N} \right)^v  \leq d(t,\Gamma) \leq 
\left( \frac{k+1}{N} \right)^{v}, \ k=0,1,\ldots,N-1, \right\}
\end{equation*}
where $d(t,\Gamma)$ is as in Definition \ref{Definition 1.3}.

We now partition the domains $\Delta^k,$ $k=0,\ldots,N-1,$ in the following way. 
Decompose each $\Delta^k$ into cubes and parallelepipeds $\Delta^k_{i_1,\ldots,i_l}$
with their edges parallel to the axes.
The lengths of edges are not less than the value $h_k$ and less than $2h_k,$
$h_k=((k+1)/N)^v-(k/N)^v, k=0,1,\dots,N-1.$ (See Fig \ref{fig:1})

\begin{figure}[t!]
\centering
\includegraphics[width=5.0in]{./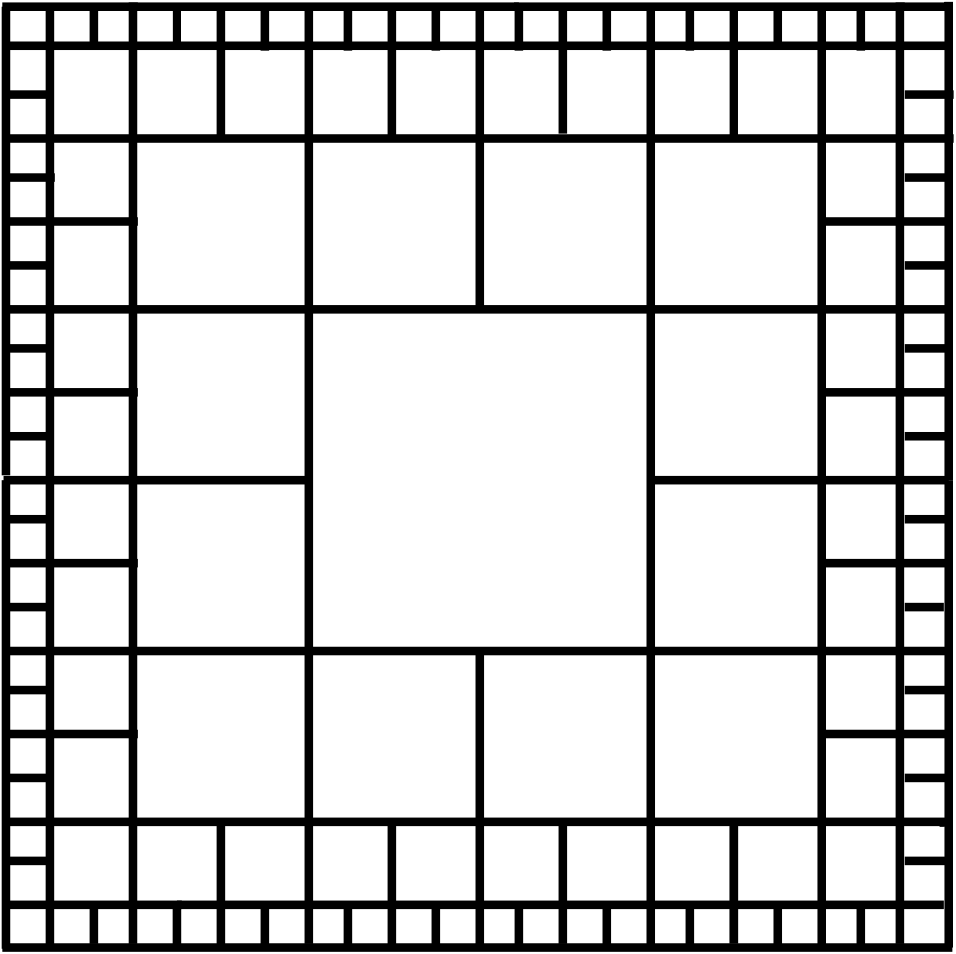}
\caption{Partition of the domain $\Omega=[-1,1]^2$ by subdomains $\Delta^k_{i_1,i_2}$.}
\label{fig:1}
\end{figure}

Let estimate the number of $\Delta^k_{i_1,\dots,i_l},$ $k=0,1,\ldots,N-1.$ Clearly,
\begin{equation*}
n \ge 1+m \sum\limits_{k=0}^{N-1} \left[\frac{2-2((k+1)/N)^v}{2 h_k}\right]^{l-1} = 
1+m \sum\limits_{k=0}^{N-1} \left[\frac{N^v-(k+1)^v}{(k+1)^v-k^v}\right]^{l-1} \ge
\end{equation*}
\begin{equation}
\ge c  
\begin{cases}
N^{v(l-1)}, &   v>l/(l-1),\\
N^l, & v<l/(l-1), \\
N^l ln N, & v=l/(l-1),
\end{cases}
\label{(3.4)}
\end{equation}
where $m$  is the number of faces in $\Omega.$ 

Similarly,
\begin{equation}
n \le 1+m \sum\limits_{k=0}^{N-1} \left(\frac{2-2(k/N)^v}{h_k} +1\right)^{l-1} 
\le c  
\begin{cases}
N^{v(l-1)}, &   v>l/(l-1),\\
N^l, & v<l/(l-1), \\
N^l ln N, & v=l/(l-1),
\end{cases}
\label{(3.5)}
\end{equation}

Thus,
\begin{equation*}
n \asymp  
\begin{cases}
N^{v(l-1)}, &   v>l/(l-1),\\
N^l, & v<l/(l-1), \\
N^l ln N, & v=l/(l-1),
\end{cases}
\end{equation*}

Let
$
M_0=\lceil (\ln N)^{u/r}\rceil, \ \ M_k=\lceil (\ln N/k)^{(u-1)/s}\rceil, \ \ k=1,2,\ldots, N-1.
$
Dividing each edge of $\Delta^k_{i_1,\ldots,i_l}$ into $M_k$ equal subintervals and
passing  the planes parallel to the coordinate planes  throught the points of division
we partition $\Delta^k_{i_1,\cdots,i_l}$ into 
$\Delta^k_{i_1,\ldots,i_l;j_1,\ldots,j_l}.$

In Section \ref{sec;Sec2} we used the interpolating polynomial $P_s(f,[a,b])$ for functions
$f$ of one variable. As a next step we consider a possible multivariate counterpart.
More precisely, for a function $f(t_1,\ldots,t_l)$ of $l$ variables on $[a_1,b_1;\cdots;a_l,b_l]$
we define the interpolating polynomial $P_{s,\ldots,s}(f,[a_1,b_1;\ldots;a_l,b_l])$ iteratively:
$
P_{s,\ldots,s}(f,[a_1,b_1;\ldots;a_l,b_l])= 
P^{t_1}_s(P^{t_2}_s(\cdots P^{t_l}_s(f;[a_l,b_l]);[a_{l-1},b_{l-1}]); \cdots;
[a_1,b_1]).
$
This polynomial then is of degree $s-1$ in each of the variables $t_1,\ldots,t_l.$ In other words,
$P^{t_l}_s(f;[a_l,b_l])$ interpolates  $ f(t_1,\ldots,t_l)$ with respect to $t_l \in [a_l,b_l];$
$P^{t_{l-1}}_{s}(P^{t_l}_s(f;[a_l,b_l]); [a_{l-1},b_{l-1}])$
interpolates  $P_s^{t_l}(f,[a_l,b_l])$ in
$t_{l-1} \in [a_{l-1}, b_{l-1}]$ etc.
The polynomial $P_{s,\ldots,s}(f,\Delta^k_{i_1,\ldots,i_l;j_1,\ldots,j_l})$ interpolates
$f(t_1,\ldots,t_l)$ in each  $\Delta^k_{i_1,\ldots,i_l;j_1,\ldots,j_l}.$ We piece together
the interpolating polynomials   $P_{s,\ldots,s}(f,\Delta^k_{i_1,\ldots,i_l;j_1,\ldots,j_l})$ 
and construct a local spline $f_N.$
Next we estimate the approximation of
$f\in \bar Q^u_{r\gamma}(\Omega,1)$  by $f_N.$

Let $k=0.$ Then,
$
\|f-P_{s,\ldots,s}(f,\Delta^0_{i_1,\ldots,i_l;j_1,\ldots,j_l})\|\leq
c E_{r-1,\ldots,r-1}(f,\Delta^0_{i_1,\ldots,i_l;j_1,\ldots,j_l})\lambda_r^l.
$
where $E_{r,\ldots,r}(f; \Delta^0_{i_1,\ldots,i_l; j_1,\ldots,j_l})$ is
the best approximation to a function $f$ in the space $C$ by a  polynomial of 
degree $r$ in each variable in $\Delta^0_{i_1,\ldots,i_l; j_1,\ldots,
j_l}.$

To estimate $E_{r,\ldots,r}(f, \Delta^0_{i_1,\ldots,i_l; j_1,
\ldots,j_l}),$ we use Taylor's expansion with the remainder
in the integral form (see e.g.  \citep{[26]})
\begin{equation}
f(t_1,\ldots,t_l) =\sum\limits^r_{\alpha=0}
\frac{1}{\alpha!} \sum\limits^l_{j_1=1} \cdots \sum\limits^l_{j_\alpha=1}
(t_{j_1} - t^0_{j_1})\cdots (t_{j_\alpha} - t^0_{j_\alpha})
\frac{\partial^{\alpha} f(t^0)}{\partial t_{j_1} \cdots \partial t_{j_\alpha}}+
R_{r+1}(t),  
\label{(3.6)}
\end{equation}
where
\begin{equation*}
R_{r+1}(t) = \frac{1}{r!}\int\limits^1_0 (1-\tau)^r
\sum\limits^l_{j_1=1} \cdots \sum\limits^l_{j_{r+1}=1}
(t_{j_1} - t^0_{j_1}) \cdots
(t_{j_{r+1}} - t^0_{j_{r+1}})
\frac{\partial^{r+1} f(t^0 +\tau(t-t^0))}{\partial t_{j_1} \cdots \partial
t_{j_{r+1}}}d\tau=
\end{equation*} 
\begin{equation*}
= (r+1)\sum\limits_{|\alpha|=r+1} \frac{(t-t^0)^\alpha}{\alpha!} \int\limits^1_0 (1-\tau)^r
f^{(\alpha)}(t^0+\tau(t-t^0))d\tau.
\end{equation*}

With $t,t_0 $ in the domain $\Delta^0_{i_1,\ldots,i_l;j_1,\ldots,j_l}$, which has a
nonempty intersection with the boundary $\Gamma=\partial \Omega,$
 we trivially have 
$d(t^0+\tau(t-t^0),\Gamma) \le h_{00}=h_0/\lceil(ln N)^{u/r}\rceil,$ and thus
$|f^{(r)}(t^0+\tau(t-t^0)|\le 1+|ln^u h_{00}|,$ which immediately yields
\begin{equation*}
E_{r-1,\ldots,r-1}(f,\Delta^0_{i_1,\ldots,i_l;j_1,\ldots,j_l})\leq
 c h_{00}^r\int\limits_0^1(1-\tau)^{r-1}(1+|\ln^u(\tau h_{00})|)d\tau
\end{equation*} 
\begin{equation*}
\leq ch_{00}^r\ln^uh_{00}\leq c\left(\frac{1}{N}\right)^s,
\end{equation*}
where $h_{00}=h_0/M_0,\ \ h_0=1/N^v.$

Hence,

\begin{equation}
\|f-P_{s,\ldots,s}(f,\Delta^0_{i_1,\ldots,i_l;j_1,
\ldots,j_l})\|_{C(\Delta^0_{i_1,\ldots,i_l;j_1,\ldots,j_l})  }\leq cN^{-s}.
\label{(3.7)}
\end{equation}

The estimate is valid for all  $\Delta^0_{i_1,\ldots,i_l;j_1,\ldots,j_l}$.

Now let $1\leq k\leq N-1.$ Then,
\begin{equation*}
\|f-P_{s,\ldots,s}(f,\Delta^k_{i_1,\ldots,i_l;j_1,\ldots,j_l})\|_{C(\Delta^k_{i_1,\ldots,i_l;j_1,\ldots,j_l})  }\leq
\end{equation*}
\begin{equation}
\leq c\left(\left(\left(\frac{k+1}{N}\right)^v-\left(\frac{k}{N}\right)^v\right)\frac{1}
{(\ln \frac{N}{k})^{(u-1)/s}}\right)^s\frac{(1+|\ln (\frac{k}{N})^v|)^{u-1}}
{\left(\frac{k}{N}\right)^{v\gamma}}\leq \frac{c}{N^s}.
\label{(3.8)}
\end{equation}

Combining (\ref{(3.7)}) and (\ref{(3.8)}) we conclude
\begin{equation}
\|f-f_N\|\leq c N^{-s}.
\label{(3.9)}
\end{equation}

Estimating the number of nodes used in constructing  $f_N$
we study two cases i) $v<l/(l-1)$ and ii) $v=l/(l-1).$

{ i).} Let $v<l/(l-1)$. The upper estimate follows immediately from the chain of inequalities
\begin{equation*}
n\leq m \sum\limits^{N-1}_{k=1} \left(\frac{2-2(\frac{k}{N})^v}{(\frac{k+1}{N})^v - (\frac{k}{N})^v}\right)^{l-1} M_k^l+
mN^{v{(l-1)}} [\ln N]^{lu/r} \leq
\end{equation*} 
\begin{equation*}
\leq c N^{v(l-1)} (\ln N)^{lu/r} + c \sum \limits_{k=1}^{N-1}
 \left( \frac{2N^v-2k^v}{v(k+\theta)^{v-1}} \right)^{l-1}\left(1+\left(  \ln \frac{N}{k}\right)^{\frac{u-1}{s}} \right)^l\le
\end{equation*} 
\begin{equation}
\leq c N^{v(l-1)} (\ln N)^{lu/r}+c\sum \limits_{k=1}^{N-1}\frac{N^{v(l-1)}}{k^{(v-1)(l-1)}}
\left(\left( \ln \frac{N}{k} \right)^{\frac{(u-1)l}{s}}+1\right)\le c N^l,
\label{(3.10)}
\end{equation}
where $m$ is the number of faces of $\Omega.$

The inequalities (\ref{(3.9)}) and (\ref{(3.10)}) yield  
 $\|f-f^*_N\| \le c n^{-s/l}$ for $v<l/(l-1).$

{ ii).} Let $v=l/(l-1).$ Just as for $v<l/(l-1),$ the upper bound  follows from the
chain of inequalities
\begin{equation*}
n\leq m\sum\limits^{N-1}_{k=1}\left( \frac{2-2(\frac{k}{N})^v}{(\frac{k+1}{N})^v-
(\frac{k}{N})^v} \right)^{l-1}M_k^l+mN^{v(l-1)}[\ln N]^{lu/r}\leq
\end{equation*} 
\begin{equation*}
\leq c N^l (\ln N)^{lu/r} + c \sum\limits^{N-1}_{k=1}\left(\frac{N^v}{k^{v-1}} \right)^{l-1}
\left[\ln\frac{N}{k}\right]^{(u-1)l/s}\leq
\end{equation*} 
\begin{equation*}
\leq c N^l (\ln N)^{lu/r} + \frac{cN^l}{N^{(v-1)(l-1)}}\sum\limits^{N-1}_{k=1}\frac{N^{(v-1)(l-1)}}{k^{(v-1)(l-1)}}
\left(\ln\frac{N}{k}\right)^{(u-1)l/s}\leq
\end{equation*} 
\begin{equation*}
\leq c N^l (\ln N)^{lu/r} + c N^{l-1}\int\limits^N_1\frac{N}{x}\left(\ln\frac{N}{x}\right)^{(u-1)l/s}dx\le
\end{equation*} 
\begin{equation}
\le c
\begin{cases}
N^l(\ln N)^{lu/r}, & lu/r \geq 1+(u-1)l/s, \\
N^l(\ln N)^{(u-1)l/s+1}, & lu/r \leq 1+(u-1)l/s.
\end{cases}
\label{(3.11)}
\end{equation}

From (\ref{(3.9)}), (\ref{(3.11)}), and for $v=l/(l-1),$ it follows
\begin{equation*}
\|f - f_N\| \leq c 
\begin{cases}
\left(\frac{1}{n}\right)^{s/l} (\ln n)^{us/r}, & u/r \geq 1/l+(u-1)/s, \\
\left(\frac{1}{n}\right)^{s/l}(\ln n)^{u-1+s/l},  & u/r \leq 1/l+(u-1)/s.
\end{cases}
\end{equation*} 
where $n$ is the number of nodes of the local spline.

Now we describe the necessary modifications to the computations above in order  to
construct a continuous local spline 
approximating $\bar Q^u_{r \gamma}(\Omega,1)$ for $v\le l/(l-1)$  and
having the estimate given in (\ref{(3.1)}) - (\ref{(3.3)}). 
Let $\Delta^k$  be defined as above and $h_k=((k+1)/N)^v -(k/N)^v,$ $h_k^*= h_k/M_k,$ 
 $k=0,1,\ldots,N-1.$ We then shall use
the following modified partitioning of  $\Omega.$  As before decompose $\Delta^{N-2}$
into  cubes or parallelepipeds $\Delta^{N-2}_{i_1,\ldots,i_l}$ with edges parallel to the axes,
and whose lengths is not less then  $h_{N-2}$ and does not exceed then $2h_{N-2},$ 
but now choose the partition in such a way that
the vertices of $\Delta^{N-1}$ are contained in the set consisting of all vertices
of the subdomains $\Delta^{N-2}_{i_1,\ldots,i_l},$

Next, partition each $\Delta^{N-2}_{i_1,\ldots,i_l}$
into  cubes or parallelepipeds $\Delta^{N-2}_{i_1,\ldots,i_l;j_1,\ldots,j_r}$ with edges parallel to the axes.
We divide each edge of the  subdomain $\Delta^{N-2}_{i_1,\ldots,i_l}$ 
into $M_{N-2}$ equal subintervals and pass the planes parallel to the coordinate planes   through the points of division.
Finally we decomposed  $\Delta^{N-2}$ into  $\Delta^{N-2}_{i_1,\ldots,i_l;j_1,\cdots,j_l}.$

To decompose $\Delta^{N-3}$ into cubes and parallelepipeds 
$\Delta^{N-3}_{i_1,\ldots,i_l;j_1,\ldots,j_l}$ we pass the planes parallel to the coordinate planes 
through the vertices of  $\Delta^{N-2}_{i_1,\ldots,i_l;j_1,\ldots,j_l},$ which are located 
on a common face  of the hyperplane $\Delta^{N-2} \cap \Delta^{N-3}.$  Denote
the obtained subdomains by  $g^{N-3}_{i_1,\ldots,i_l;j_1,\ldots,j_l}.$
Let $h^*_{N-3}=h_{N-3}/M_{N-3}.$ Consider $g^{N-3}_{i_1,\ldots,i_l;j_1,\ldots,j_l}=[a_1,b_1;\cdots;a_l,b_l].$
If the length of the edge $(a_k,b_k)$ exceeds $2h^*_{N-3},$ 
we divide $(a_k,b_k)$ into $[|b_k-a_k|/h^*_{N-3}]$ equal subintervals
and pass the planes parallel to the coordinate planes through the points of division. 
We shall refer to the result of this procedure as $\Delta^{N-3}_{i_1,\ldots,i_l;j_1,\ldots,j_l}$.
This way we have  $\Delta^{N-3}$ decomposed into  $\Delta^{N-3}_{i_1,\ldots,i_l;j_1,\ldots,j_l}.$ 
Continuing this process 
we partition the  domain $\Omega$  into subdomains $\Delta^k_{i_1,\ldots,i_l;j_1,\ldots,j_l}, 
k=0,1,,\ldots,N-2.$

One estimates the total number of $\Delta^{k}_{i_1,\ldots,i_l;j_1,\cdots,j_l}$
in $\Omega$ using (\ref{(3.10)}), (\ref{(3.11)}).

Now we construct the continuous spline $f_N$ approximating 
a function $f$ of $l$ variables.
The polynomial $P_{s,\ldots,s}(f,\Delta^{N-1})$
interpolates  $f$ in $\Delta^{N-1};$ $P_{s,\ldots,s}(\tilde f,\Delta^{N-2}_{i_1,\ldots,i_l;j_1,\cdots,j_l})$
interpolates $\tilde f$  in $\Delta^{N-2}_{i_1,\ldots,i_l;j_1,\cdots,j_l}.$ 
We say that the function $\tilde f$ equals to $f$ at all  points of interpolation except for those
located on the hypersurface $\Delta^{N-1} \cap \Delta^{N-2}$. At those points, 
$\tilde f$ equals to $P_{s,\ldots,s}( f,\Delta^{N-1}).$ Continuing this process, we construct the interpolating polynomials 
 $P_{s,\ldots,s}(\tilde f,\Delta^{k}_{i_1,\ldots,i_l;j_1,\cdots,j_l}), k=N-3, \cdots,1,0.$

Next we piece together all the interpolating polynomials $P_{s,\ldots,s}(f,\Delta^{N-1}),$\\
$P_{s,\ldots,s}(\tilde f,\Delta^k_{i_1,\ldots,i_l;j_1,\cdots,j_l}),$
$k=0,1,2,\ldots,N-2,$ that interpolate $f$ within each  $\Delta^{N-1},$ 
$\Delta^k_{i_1,\ldots,i_l;j_1,\cdots,j_l},$ and construct 
the continuous local spline $f_N^*.$

Repeating the above computations for a non-continuous local spline we obtain\\
$
\|f - f_N^*\|_{C(\Delta^k_{i_1,\ldots,i_l;j_1,\cdots,j_l})} \leq cN^{-s}, k=0,1,2,\cdots, N-2,
$
$
\|f - f_N^*\|_{C(\Delta^{N-1})} \leq cN^{-s}.
$

Therefore
\begin{equation}
\|f - f_N^*\|_{C(\Omega)} \leq cN^{-s}.
\label{(3.12)}
\end{equation}

Using the  inequalities  (\ref{(3.10)}), (\ref{(3.11)}), (\ref{(3.12)})  we have proved the following statements

\begin{equation}
d_n(\bar Q^u_{r \gamma} (\Omega,1), C) \leq c n^{-s/l}
\label{(3.13)}
\end{equation}
if $v < l/(l-1),$ 

\begin{equation}
d_n(\bar Q^u_{r \gamma} (\Omega, 1), C) \leq c \left \{
\begin{array}{cc}
n^{-s/l} (\ln n)^{u s/r}, \, u/r \geq 1/l + (u-1)/s,\\
n^{-s/l} (\ln n)^{u-1 + s/l}, \, u/r \leq 1/l + (u-1)/s\\
\end{array}
\right.
\label{(3.14)}
\end{equation}
if $v = l/(l-1).$

Let estimate $\delta_n(\bar Q_{r\gamma}^u(\Omega,1))$ for $v=s/(s-\gamma), v<l/(l-1).$

Note that for every given positive integer $u$ we have 
 $Q_{r \gamma}(\Omega,1) \subset \bar Q^u_{r \gamma}(\Omega,1),$
which together with Theorem \ref{Theorem 1.3} yields
\begin{equation}
\delta_n(\bar Q^u_{r \gamma}(\Omega,1)) \geq
\delta_n(Q_{r\gamma}(\Omega,1)) \asymp
n^{-s/l},  v < l/(l-1).
\label{(3.15)}
\end{equation}

Let estimate $\delta_n(\bar Q_{r\gamma}^u(\Omega,1))$ for $v=s/(s-\gamma), v=l/(l-1).$

We decompose the domain $\Omega$ into subdomains
$\Delta^k_{i_1,\ldots,i_l},$ $k=0,1,\ldots,N-1$
following the procedure which was described above in the proof of Theorem (see the part
of constructing  a not necessarily continuous local spline).

Let
$$
M_k = \left \{
\begin{array}{cc}
\lceil(\ln N)^{(u-1)/s}\rceil, \, k=0\\
\lceil\left(\ln \frac{N}{k}\right)^{(u-1)/s}\rceil, \, k = 1,2,\ldots,N-1.\\
\end{array}
\right.
$$

We divide each edge of $\Delta^k_{i_1,\ldots,i_l},$ $k=0,1,\ldots,N-1$ into $M_k$ equal subintervals
and pass the planes parallel to
coordinate planes through the points of division. This way we have
$\Delta^k_{i_1,\ldots,i_l},$ $k=0,1,\ldots,N-1$ decomposed into
$\Delta^k_{i_1,\ldots,i_l;j_1,\cdots,j_l},$ $k=0,1,\ldots,N-1.$

Let estimate the number   $\Delta^k_{i_1,\ldots,i_l; j_1,\ldots,j_l},$  $k=0,1,\cdots,N-1.$ Clearly
$$
n \asymp m\sum\limits^{N-1}_{k=1}
\left(\frac{2-2\left(\frac{k}{N}\right)^v}{\left(\frac{k+1}{N}\right)^v - \left(\frac{k}{N}\right)^v}\right)^{l-1} M^l_k + m N^{v(l-1)}(\ln N)^{l(u-1)/s} \asymp
$$
$$
\asymp N^l(\ln N)^{l(u-1)/s} + N^{l-1}\int\limits^N_1
\frac{N}{x}\left(\ln \frac{N}{x}\right)^{(u-1)l/s}dx \asymp N^l (\ln N)^{(u-1)l/s + 1}.
$$

Let $\Delta^k_{i_1,\ldots,i_l; j_1,\ldots,j_l} = [b_{i_1,j_1}, b_{i_1, j_1+1}; \cdots, b_{i_l,j_l}, b_{i_l, j_l+1}].$
Introduce the functions
\begin{equation*}
\varphi^k _{i_1,\ldots,i_l; j_1,\ldots,j_l}(t)  = 
\end{equation*}
\begin{equation}
=\left \{
\begin{array}{cc}
A_k \frac{((t_1 - b_{i_1,j_1})(b_{i_1,j_1+1} - t_1) \cdots (t_l - b_{i_l,j_l})(b_{i_l,j_l+1} - t_l))^s}{(h_k/M_k)^{(2l-1)s}
 ((k+1)/N)^{v\gamma}} \left(1+\left|\ln^{u-1}\left(\frac{k+1}{N}\right)^v\right|\right),\\  
t\in \Delta^k_{i_1,\ldots,i_l; j_1,\ldots,j_l},
0, \ \ \ t\in \Omega\setminus \Delta^k_{i_1,\ldots,i_l; j_1,\ldots,j_l},
\end{array}
\right.
\label{(3.16)}
\end{equation}
$k=0,1,\ldots,N-2.$

Let $\Delta^{N-1} = [b_{i_1}, b_{i_1+1}; \cdots, b_{i_l}, b_{i_l+1}].$
Introduce the function
\begin{equation}
\varphi^{N-1}(t)  =
\left \{
\begin{array}{cc}
A_{N-1} \frac{((t_1 - b_{i_1})(b_{i_1+1} - t_1) \cdots (t_l - b_{i_l})(b_{i_l+1} - t_l))^s}{h_{N-1}^{(2l-1)s}} ,\ \ \ 
t\in \Delta^{N-1},\\
0, \ \ \ t\in \Omega\setminus \Delta^{N-1},
\end{array}
\right.
\label{(3.17)}
\end{equation}

Constants $A_k,$ $k=0,1,\ldots,N-1,$ are chosen such that
\begin{equation*}
|D^s\varphi^k_{i_1,\cdots,i_l}|
\le \left(1+\left|\ln^{u-1}\left(\frac{k+1}{N}\right)^v\right|\right)/\left(\frac{k+1}{N}\right)^{v(s-r)}.
\end{equation*}

Constant $A_{N-1}$ is chosen such that
$
|D^s\varphi^{N-1}|
\le 1.$

Let $\xi(t)$ be a linear combination
$
\xi(t) = \sum\limits_{k, i_1,\ldots,i_l;j_1,\cdots,j_l}
 C^k_{i_1,\ldots,i_l;j_1,\cdots,j_l} \varphi^k_{i_1,\ldots,i_l;j_1,\cdots,j_l}(t),
$
where $|C^k_{i_1,\ldots,i_l;j_1,\cdots,j_l}| \leq 1.$
Here the summation is taken over all domains $\Delta^k_{{i_1,\ldots,i_l;j_1\cdots,j_l}}$ of  $\Omega.$

Repeating the arguments presented in \cite{[2],[3],[24]} we have 
$
\delta_n(\bar Q^u_{r \gamma}(\Omega, 1)) \geq cN^{-s} \geq c n^{-s/l}(\ln n)^{u-1+s/l}.
$

From the inequalities (\ref{(3.13)}) -- (\ref{(3.14)}) and estimates of the Babenko widths 
we complete the proof of Theorem.

\end{proof}

\begin{rmk}
Let $v=l/(l-1).$
The estimate 
$
d_n(\bar Q^u_{r\gamma}(\Omega,1),C) \geq c n^{-s/l}(\ln n)^{u-1+s/l}
$
follows from the definition of $\xi(t)$ and Lemma \ref{Lemma 1.1}.
\end{rmk}

For $v > l/(l-1),$ we state the following

\begin{theorem} \rm
\label{Theorem 3.2} Let $\Omega = [-1, 1]^l,$ $l \geq 2,$ $u=1,2,\cdots, $ $v = s/(s-\gamma),$ $v > l/(l-1).$ The estimate
$
\delta_n(\bar Q^u_{r\gamma}(\Omega, 1)) \geq c n^{-(s-\gamma)/(l-1)} \ln^{u-1} N
$
holds.
\end{theorem}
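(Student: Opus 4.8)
The plan is to establish the lower bound for the Babenko width $\delta_n(\bar Q^u_{r\gamma}(\Omega,1))$ when $v>l/(l-1)$ by the standard bump-function technique already used in the proof of Theorem~\ref{Theorem 3.1} (and, before that, in \citep{[2],[3],[24]}), adapting the scaling of the subdomains and the amplitude of the bumps to the present regime. First I would partition $\Omega$ into the layers $\Delta^k$, $k=0,1,\dots,N-1$, exactly as in Definition of $\Delta^k$ above, and then decompose each $\Delta^k$ into cubes/parallelepipeds $\Delta^k_{i_1,\dots,i_l}$ with edge lengths of order $h_k=((k+1)/N)^v-(k/N)^v$. In the regime $v>l/(l-1)$ the count (\ref{(3.4)})--(\ref{(3.5)}) gives $n\asymp N^{v(l-1)}$. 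To exploit the logarithmic growth I would further subdivide each $\Delta^k_{i_1,\dots,i_l}$ into $M_k$ equal pieces along each edge, with $M_0=\lceil(\ln N)^{(u-1)/s}\rceil$ and $M_k=\lceil(\ln(N/k))^{(u-1)/s}\rceil$ for $k\ge 1$; a computation of the same shape as the one after (\ref{(3.15)}) shows this changes $n$ only by a factor of order $(\ln N)^{\text{const}}$, so still $n\asymp N^{v(l-1)}(\ln N)^{O(1)}$, and in particular $N^{-(s-\gamma)/(l-1)}\ln^{u-1}N\asymp n^{-(s-\gamma)/(l-1)}(\ln n)^{\text{const}}$ up to logarithmic corrections which I would track carefully so that the final exponent matches $\ln^{u-1}N$.

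The next step is to attach to each subdomain $\Delta^k_{i_1,\dots,i_l;j_1,\dots,j_l}$ a bump $\varphi^k_{i_1,\dots,i_l;j_1,\dots,j_l}$ supported there, of the form $A_k$ times the product $\prod_m\big((t_m-b_{i_m,j_m})(b_{i_m,j_m+1}-t_m)\big)^s$ normalized by the appropriate power of $h_k/M_k$ and by $((k+1)/N)^{v\gamma}$, carrying the logarithmic factor $\big(1+|\ln^{u-1}((k+1)/N)^v|\big)$, exactly as in (\ref{(3.16)})--(\ref{(3.17)}). The constants $A_k$ are fixed so that $|D^v\varphi^k|\le\big(1+|\ln^{u-1}((k+1)/N)^v|\big)/((k+1)/N)^{v(s-r)}$ and the lower-order derivatives and the boundary-layer derivative of order $r$ obey the bounds of Definition~\ref{Definition 1.6}; one then checks that an arbitrary sign combination $\xi(t)=\sum C^k_{\dots}\varphi^k_{\dots}(t)$ with $|C^k_{\dots}|\le1$ lies in $\bar Q^u_{r\gamma}(\Omega,1)$. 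Evaluating $\|\varphi^k_{i_1,\dots,i_l;j_1,\dots,j_l}\|_C$ gives a value of order $(h_k/M_k)^{vr}\cdot\big(1+|\ln^{u-1}(k/N)^v|\big)\cdot(k/N)^{-v\gamma}$, and substituting $h_k\asymp (k/N)^{v-1}/N$ together with the choice of $M_k$ shows this is $\asymp N^{-s}$ for $1\le k\le N-1$ and $\asymp N^{-s}$ (up to $\ln$-powers absorbed by $M_0$) for $k=0$ — matching the calculation leading to (\ref{(3.7)})--(\ref{(3.9)}). Since there are $n$ such bumps with disjoint supports and uniformly controlled sup-norm, a continuous selection argument (cf.\ \citep{[2],[3]}) forces $\delta_n(\bar Q^u_{r\gamma}(\Omega,1))\ge c\,\min_k\|\varphi^k\|_C\asymp cN^{-s}$, and re-expressing $N^{-s}$ through $n\asymp N^{v(l-1)}(\ln N)^{O(1)}$ yields $N^{-s}\asymp n^{-s/(v(l-1))}(\ln n)^{\text{const}}=n^{-(s-\gamma)/(l-1)}(\ln n)^{\text{const}}$, and a more careful bookkeeping of the logarithmic powers produces precisely the stated $c\,n^{-(s-\gamma)/(l-1)}\ln^{u-1}N$.

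The main obstacle I anticipate is the accounting of logarithmic factors: I must verify that the subdivision into $M_k$ pieces, which is what allows the bumps to carry the $\ln^{u-1}$ amplitude while still satisfying the derivative constraints of Definition~\ref{Definition 1.6}, increases the node count $n$ by exactly the right power of $\ln N$ so that, after inverting the relation between $n$ and $N$, the surplus logarithm on the amplitude side survives as $\ln^{u-1}N$ in the final bound rather than being cancelled or over-counted. In the regime $v>l/(l-1)$ the sum $\sum_k (N^v/k^{v-1})^{l-1}M_k^l$ is dominated by the boundary term $k=O(1)$ (since $(v-1)(l-1)>1$), so $n\asymp N^{v(l-1)}M_0^l\asymp N^{v(l-1)}(\ln N)^{l(u-1)/s}$; feeding this into $N^{-s}$ gives an extra factor $(\ln N)^{(u-1)/s}$, which combined with the $\ln^{u-1}$ already present on $\|\varphi^0\|$ must be reconciled with the claimed exponent $u-1$ — I would resolve this by choosing $M_0$ slightly differently (as in Theorem~\ref{Theorem 3.1}, $M_0=\lceil(\ln N)^{u/r}\rceil$ versus $(u-1)/s$) so that the dominant layer $k=0$ contributes an amplitude of clean order $N^{-s}\ln^{u-1}N$ while the node inflation stays at a controllable power, and then invoking Lemma~\ref{Lemma 1.1} exactly as in the Remark following Theorem~\ref{Theorem 3.1} to convert the bump family into the width lower bound. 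The remaining steps — verifying membership in the class and the disjoint-support sup-norm estimate — are routine repetitions of the corresponding parts of the proof of Theorem~\ref{Theorem 3.1}.
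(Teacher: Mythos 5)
Your overall strategy (a family of disjoint bumps plus the argument of \cite{[2],[3],[24]}) is the right one, but the specific construction you propose does not deliver the claimed logarithmic factor, and the paper's proof of Theorem \ref{Theorem 3.2} in fact rests on a different partition. You keep the layers $\rho_k=(k/N)^v$ and then subdivide each cube $\Delta^k_{i_1,\dots,i_l}$ into $M_k^l$ pieces with $M_k\asymp(\ln(N/k))^{(u-1)/s}$. For a lower bound this works against you: shrinking the support of a bump from $h_k$ to $h_k/M_k$ divides its sup-norm by $M_k^{s}$, which exactly cancels the factor $\bigl(1+|\ln^{u-1}((k+1)/N)^v|\bigr)$ you placed in the amplitude, so every bump ends up of size only $cN^{-s}$ --- as you yourself compute (``$\asymp N^{-s}$ for $1\le k\le N-1$''). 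The hoped-for recovery of $\ln^{u-1}$ from the relation between $n$ and $N$ also fails in this regime: since $(v-1)(l-1)>1$, the count $\sum_k (N^v/k^{v-1})^{l-1}M_k^l$ is dominated by $k=O(1)$ and is $\asymp N^{v(l-1)}(\ln N)^{l(u-1)/s}$, and converting $N^{-s}$ through this relation yields only $(\ln n)^{l(u-1)/(v(l-1))}$, whose exponent is strictly smaller than $u-1$ because $v(l-1)>l$. Your proposed repair --- choosing $M_0$ as in Theorem \ref{Theorem 3.1} --- moves in the wrong direction for the same reason: any subdivision of the boundary layer only shrinks the bump amplitude further.

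The paper instead abandons the grid $\rho_k=(k/N)^v$ altogether and defines the layers adaptively: $\rho_0=N^{-v}$ and, for $k\ge1$, $\rho_k$ is determined by the equation $h_k^s/\rho_k^\gamma=N^{-s}\ln^{u-1}N$ with $h_k=\rho_k-\rho_{k-1}$; one checks that these equations are solvable, that $h_k>h_k^*=(k/N)^v-((k-1)/N)^v$, hence that the number of layers does not exceed $N$ and the total number of cubes is still $n\asymp N^{v(l-1)}$. There is no further subdivision into $M_k$ pieces. With this choice every bump --- not just the one at $k=0$ --- has sup-norm at least $cN^{-s}\ln^{u-1}N$ by construction (for $k=0$ this uses $v(s-\gamma)=s$ together with the fact that $(1+|\ln^{u-1}d|)/d^{\gamma}\ge cN^{v\gamma}\ln^{u-1}N$ on $\Delta^0$, so the enhanced amplitude is compatible with Definition \ref{Definition 1.6}), and the standard argument then gives $\delta_n\ge cN^{-s}\ln^{u-1}N=cn^{-(s-\gamma)/(l-1)}\ln^{u-1}n$. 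This adaptive choice of the $\rho_k$ is the missing idea in your proposal.
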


\begin{proof}
Let   $\Delta^0$  be the set $
\Delta^0=\left\{t \in \Omega: 0  \leq d(t,\Gamma) \leq  (1/N^v)= \rho_0 \right\}.$

Let $\Delta^k$ be the set $
\Delta^k=\left\{t \in \Omega:\rho_{k-1} \leq d(t,\Gamma) \leq \rho_k \leq 1\right\},$
where $\rho_k$ is defined by
$
h_k^s/\rho_k^\gamma = N^{-s} \ln^{u-1}N,$
and $h_k = \rho_k - \rho_{k-1},$ $k=1,2,\ldots,m.$
Here $m$ is the largest integer value when $\rho_m \leq 1.$ 
If $\rho_m =1,$ then  $\Omega$ is decomposed into $\Delta^k,$ $k=0,1,\ldots,m.$ 
If $\rho_m < 1,$ then  $\Delta^{m+1}$ is the set 
$
\Delta^{m+1}=\left\{t \in \Omega:\rho_{m} \leq d(t,\Gamma) \leq 1\right\}.
$

Without loss of generality we demonstrate our computations for $\rho_m = 1.$
Now we show that the equations
$
h_k^s/\rho_k^\gamma = N^{-s} \ln^{u-1} N
$
are solvable.

Let $\rho_k^*=(k/N)^v, k=0,1,\cdots,N, h_k^*=\rho_{k}^*-\rho_{k-1}^*, k=1,\cdots,N.$ 
Then
$h_1^{*s}/\rho_1^{*\gamma}=1/N^s$ if $k=1.$ 
For  $k=2,\ldots,N,$ we have
\begin{equation*}
\frac{h_k^{*s}}{\rho_k^{*\gamma}} = \frac{(k^v - (k-1)^v)^s}{(k/N)^{v\gamma}N^{vs}} = 
\frac{(v(k-\Theta)^{v-1})^s}{k^{v\gamma}} \frac{1}{N^s} 
\geq \left(\frac{k-1}{k}\right)^{v\gamma} v^s  \frac{1}{N^s}
\geq \left(\frac{1}{2}\right)^{v\gamma} v^s \frac{1}{N^s}.
\end{equation*}

Thus there exists a sequence
$\rho_k^* = (k/N)^{v},$ $k=0,1,\ldots,N,$ such that\\
$
h_k^{*s}/\rho_k^{*\gamma} \geq (1/2)^{v\gamma}(v/N)^s  
 = cN^{-s}, \, h_k^* = \rho_k^* - \rho_{k-1}^*. 
$

On the other hand, $\varphi(\rho) = (\rho - \rho_{k-1})^s/\rho^\gamma$ 
is an increasing function if $\rho > \rho_{k-1}$ for any $\rho_{k-1}.$ 

Therefore there exists a sequence $\rho_k$
such that  $(\rho_k - \rho_{k-1})^s/(\rho_k)^\gamma \geq N^{-s} \ln^{u-1} N,$ 
moreover $h_k = \rho_k - \rho_{k-1} > h_k^* = \rho_k^* - \rho_{k-1}^*, k=1,\cdots,m.$

Hence the number $m$ of $\Delta^k,$ $k=0,1,\ldots,m$ is less than $N.$ 
We decompose each $\Delta^k$ into cubes or parallelepipeds $\Delta^k_{i_1,\ldots,i_l}$ in a 
way described above in the proof of the Theorem \ref{Theorem 3.1}(see the part
of constructing  a not necessarily continuous local spline). 
Clearly, the total number of $\Delta^k_{i_1,\ldots,i_l},$ 
$k=0,1,\ldots,m$ is equal to 
$
n \asymp n_0\asymp N^{v(l-1)},
$
where $n_0$ is the number of $\Delta^0_{i_1,\ldots,i_l}.$

Let $\Delta^k_{i_1,\ldots,i_l} = [b^k_{i_1}, b^k_{i_1+1}; \ldots; b^k_{i_l}, b^k_{i_l+1}],$ 
$k=0,1,\ldots,m.$
Introduce the functions
\begin{equation*}
\varphi^0_{i_1,\ldots,i_l}(t_1,\cdots,t_l) =\left \{
\begin{array}{cc}
A_0\frac{((t_1 - b^0_{i_1}) (b^0_{i_1+1} - t_1) \cdots (t_l - b^0_{i_l}) 
(b^0_{i_l+1} - t_l))^s}{h_0^{(2l-1)s}} N^{v\gamma} \ln^{u-1} N, \\ \,   t \in \Delta^0_{i_1,\ldots,i_l}, \ ,
0 ,\ ,  t \in \Omega \backslash \Delta^0_{i_1,\ldots,i_l};
\end{array}\right.
\end{equation*}
\begin{equation*}
\varphi^k_{i_1,\ldots,i_l}(t_1,\cdots,t_l) =\left \{
\begin{array}{ccc}
A_k\frac{((t_1 - b^k_{i_1}) (b^k_{i_1+1} - t_1) \cdots (t_l - b^k_{i_l})
 (b^k_{i_l+1} - t_l))^s}{h_k^{(2l-1)s} \rho_k^{\gamma}}, \\ \, t \in \Delta^k_{i_1,\ldots,i_l}, 
0 , \, t \in \Omega \backslash \Delta^k_{i_1,\ldots,i_l},
\end{array}\right.
\end{equation*}
$k=1,2,\ldots,m.$
Constants $A_k,$ $k=0,1,\ldots,m,$ are chosen such that
$
|D^s\varphi^0_{i_1,\cdots,i_l}|\le N^{v\gamma}\ln^{u-1}N,
$
$
|D^s\varphi^k_{i_1,\cdots,i_l}|\le 1/\rho_k^\gamma.
$
Obviously, such constants exist and do not depend on $N, u, \gamma.$ 

Let estimate the maximum values of $\varphi^k_{i_1,\ldots,i_l}(t),$ $k=0,1,\ldots,m.$
Clearly\\
$
\varphi^0_{i_1,\ldots,i_l}(t)  \geq c h_0^s  N^{v\gamma} \ln^{u-1} N = c N^{-v(s-\gamma)} \ln^{u-1}N = 
c N^{-s}\ln^{u-1}N,
$\\
$
\varphi^k_{i_1,\ldots,i_l}(t)  \geq c h_k^s/\rho_k^\gamma = c N^{-s}\ln^{u-1}N , h_k=\rho_k - \rho_{k-1},\ \ \ k=0,1,\ldots,m.
$

Let $\xi(t)$ be a linear combination
$
\xi(t) = \sum\limits_{k, i_1,\ldots,i_l} C^k_{i_1,\ldots,i_l} \varphi^k_{i_1,\ldots,i_l}(t),
$
where $|C^k_{i_1,\ldots,i_l}| \leq 1.$
Here the summation is taken over all domains $\Delta^k_{{i_1,\ldots,i_l}}$ of  $\Omega.$ 

Repeating the arguments presented in  \cite{[2],[3],[24]}, we have  
$
\delta_n(\bar Q^u_{r\gamma}(\Omega,1)) \geq c N^{-s}\ln^{u-1} N =
c n^{-(s-\gamma)/(l-1)}\ln^{u-1}n.
$
\end{proof}

\begin{rmk}
The estimate 
$
d_n(\bar Q^u_{r\gamma}(\Omega,1),C) \geq c n^{-(s-\gamma)/(l-1)}
\ln^{u-1} n
$
follows from the definition of $\xi(t)$ and Lemma \ref{Lemma 1.1}.
\end{rmk}

Let $v> l/(l-1).$
First, we construct a local spline not necessary continuous  which approximates
the functions of $\bar Q^u_{r \gamma}(\Omega,1)$ for $v>l/(l-1)$   
and has the error not exceeding  $c(\ln^u n)n^{-(s-\gamma)/(l-1)}.$
Afterwards we construct a continuous local spline having the same error of approximation. 

When  constructing  a local spline we employ the same process as used in the proof of Theorem \ref{Theorem 3.1}
(see the part of constructing  a not necessarily continuous local spline). 
We define  the domains  $\Delta^k$ and partition them into $\Delta^k_{i_1,\cdots,i_l,}$  $k=0,1,\ldots,N-2,$ 
in a similar way we did for  $v \le l/(l-1).$ 

Clearly, the number $n$ of $\Delta^k_{i_1,\cdots,i_l}$ is estimated by
\begin{equation}
n\asymp N^{v(l-1)}.
\label{(3.18)}
\end{equation}

The  polynomial
$P_{s,\ldots,s}(f;\Delta^{k}_{i_1,\cdots,i_l})$
interpolates $f$  in $\Delta^{k}_{i_1,\cdots,i_l},$ $k=0,1,\cdots,N-1.$
Hence  the  local spline $f_N$ is composed of the polynomials
$P_{s,\ldots,s}(f;\Delta^k_{i_1,\ldots,i_l}),$ $k=0,1,\ldots,N-1.$

It is easy to see that for $1 \leq k \leq N-1$ the following estimate holds
\begin{equation}
\|f - f_N\|_{C(\Delta^k_{i_1,\ldots,i_l})} \leq cN^{-s}(\ln N)^{u-1}.
\label{(3.19)}
\end{equation}

Indeed,
\begin{equation*}
\|f - f_N\|_{C(\Delta^k_{i_1,\ldots,i_l})} \leq
ch^s_k \frac{|\ln (\frac kN)^v|^{u-1}}{((k/N)^v)^\gamma} 
\leq ch^s_k \left(\frac{N}{k}\right)^{v\gamma}(\ln N)^{u-1} =
\frac{c}{N^s}(\ln N)^{u-1}.
\end{equation*} 

Let $k=0.$ Without loss of generality we demonstrate our computations in \\ 
$\Delta^0_{0,\ldots,0} = [-1,t_1; -1,t_1; \ldots; -1,t_1],$ where
$t_1 = -1 + \left(\frac{1}{N}\right)^v .$
Using Taylor's expansion (\ref{(3.6)}) we obtain
\begin{equation*}
\|f - f_N\|_{C(\Delta^0_{0,\ldots,0})} \leq
c\lambda^l_s E_{r-1,\ldots,r-1}(f,\Delta^0_{0,\ldots,0}) \leq
\end{equation*}
\begin{equation*}
\leq c\max\limits_{t \in \Delta^0_{0,\ldots,0}}
\left|\sum\limits_{|k|=r}\frac{1}{k!}\int\limits^1_0(1-\tau)^{r-1}(t-t^0)^k
(1+|\ln^u d(-1+\tau(t_k+1)), \Gamma)|)d\tau \right|\leq
\end{equation*}
\begin{equation*}
\leq c\max\limits_{t \in \Delta^0_{0,\ldots,0}}
\left|\sum\limits_{|k|=r}\frac{1}{k!}\int\limits^1_0(1-\tau)^{r-1}(t-t^0)^k
|\ln^u \tau(t_k+1)|)d\tau| \leq ch^r_0|\ln^u h_0\right| \leq
 c \frac{\ln^u N}{N^s},
\end{equation*}
where $t^0=(-1,\ldots,1).$

From the previous estimate and the equalities (\ref{(3.18)}) we have
$
\|f - f_N\|_{C(\Omega)} \leq c N^{-s}\ln^u N
\leq c n^{-(s-\gamma)/(l-1)}\ln^u n.
$

To construct the continuous local spline $f_N^*$ approximating 
$\bar Q^u_{r\gamma}(\Omega,1)$ for $v>l/(l-1)$ and having the error 
$c(\ln^u n) n^{-(s-\gamma)/(l-1)}$,
we employ all above constructions for the continuous local spline $f_N^*$ 
approximating $\bar Q^u_{r\gamma}(\Omega,1)$ when $v \le l/(l-1),$ cf. the proof of \ref{Theorem 3.1}.

Thus, for $0 \leq k \leq N-1,$ the following estimates hold
\begin{equation}
\|f - f_N^*\|_{C(\Delta^k_{i_1,\ldots,i_l})} \leq c N^{-s}(\ln N)^{u-1}, k=1,\cdots,N-1,
\label{(3.20)}
\end{equation}
\begin{equation}
\|f - f_N^*\|_{C(\Delta^0_{0,\ldots,0})} \leq c h^r_0|\ln^u h_0| \leq c N^{-s}\ln^u N.
\label{(3.21)}
\end{equation}

From the previous estimates and the equality (\ref{(3.18)}) we have
$
\|f - f_N^*\|_{C(\Omega)} \leq c N^{-s}\ln^u N
\leq c n^{-(s-\gamma)/(l-1)}\ln^u n.
$

Since the number of the nodes used in construction of the continuous local spline $f_N^*$ is
$s^l$ in each $\Delta^k_{i_1,\ldots,i_l},$ $k=0,1,\ldots,N-1,$
we state  the following

\begin{theorem} \rm
\label{Theorem 3.3} Let $\Omega = [-1,1]^l,$ $l \geq 2,$ $u = 1,2,\cdots,,$
 $v=s/(s-\gamma), v > l/(l-1).$ Then the  estimate 
$
 d_n(\bar Q^u_{r \gamma}(\Omega,1)) \le c n^{-(s-\gamma)/(l-1)}\ln^u n 
$
holds.
\end{theorem}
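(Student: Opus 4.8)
The plan is to establish the upper bound $d_n(\bar Q^u_{r\gamma}(\Omega,1),C) \le c\, n^{-(s-\gamma)/(l-1)}\ln^u n$ for $v > l/(l-1)$ by exhibiting an explicit local spline, following exactly the construction laid out in the paragraphs immediately preceding the statement. First I would recall the decomposition of $\Omega = [-1,1]^l$ into the annular layers $\Delta^k = \{t\in\Omega : (k/N)^v \le d(t,\Gamma) \le ((k+1)/N)^v\}$, $k=0,1,\ldots,N-1$, and the further partition of each $\Delta^k$ into cubes and parallelepipeds $\Delta^k_{i_1,\ldots,i_l}$ with edges parallel to the axes and of length comparable to $h_k = ((k+1)/N)^v - (k/N)^v$. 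Since $v > l/(l-1)$, the counting estimate \eqref{(3.18)}, namely $n \asymp N^{v(l-1)}$, holds (this is the case $v > l/(l-1)$ of \eqref{(3.4)}--\eqref{(3.5)}), and crucially no subdivision into $M_k$ subcubes is needed here: the inner layers already carry enough parallelepipeds that adding $M_k^l$ extra pieces per parallelepiped would only change constants, so one may simply take the spline to consist of one interpolating polynomial $P_{s,\ldots,s}(f,\Delta^k_{i_1,\ldots,i_l})$ per subdomain, using $s^l$ nodes each.

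Next I would estimate the approximation error layer by layer. For $1 \le k \le N-1$ the relevant bound is \eqref{(3.19)}: on $\Delta^k_{i_1,\ldots,i_l}$ we have $d(t,\Gamma) \ge (k/N)^v$, so the $r$-th order derivatives are bounded by $c(1+|\ln^{u-1}(k/N)^v|) \le c(\ln N)^{u-1}$ and the higher derivatives up to order $s$ are bounded by $c(\ln N)^{u-1}/((k/N)^v)^{|v|-r}$; feeding this into the standard error estimate for tensor-product interpolation (Lemma \ref{Lemma 1.3} applied variable by variable, with the Lebesgue constant $\lambda_s^l$ absorbed into $c$) gives $\|f-f_N\|_{C(\Delta^k_{i_1,\ldots,i_l})} \le c\, h_k^s (N/k)^{v\gamma}(\ln N)^{u-1} = c\, N^{-s}(\ln N)^{u-1}$, where the last equality uses $h_k \asymp (k/N)^{v-1}/N$ together with $v(s-\gamma)=s$. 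For the boundary layer $k=0$ the derivatives of order $r$ are only bounded by $1+|\ln^u d(t,\Gamma)|$, so Taylor's expansion \eqref{(3.6)} about the corner vertex $t^0$ gives the weaker bound $\|f-f_N\|_{C(\Delta^0_{0,\ldots,0})} \le c\, h_0^r |\ln^u h_0| \le c\, N^{-s}\ln^u N$, and this worst term determines the final estimate $\|f-f_N\|_{C(\Omega)} \le c\, N^{-s}\ln^u N \le c\, n^{-(s-\gamma)/(l-1)}\ln^u n$ via \eqref{(3.18)}.

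The last step is to upgrade the (possibly discontinuous) spline to a continuous one without degrading the order. Here I would invoke the modified partitioning from the proof of Theorem \ref{Theorem 3.1}: one processes the layers from the innermost $\Delta^{N-1}$ outward, each time passing the dividing hyperplanes of the already-constructed outer layer through the common face, so that vertices match up across layer boundaries; then one defines $\tilde f$ to agree with $f$ at all interpolation nodes except those on the shared hypersurface, where $\tilde f$ is forced to equal the polynomial already built on the outer side. Since each interpolating polynomial is replaced by another polynomial of the same degree interpolating data that differs from the true values by $O(N^{-s}\ln^u N)$, the error estimates \eqref{(3.20)}--\eqref{(3.21)} persist with the same order, and the node count stays $s^l$ per $\Delta^k_{i_1,\ldots,i_l}$, hence $\asymp n$. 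Combining these yields the claimed bound.

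The main obstacle is the continuity-repair argument: one must check that matching vertices across the telescoping layers can actually be arranged while keeping every edge length comparable to the target mesh size $h_k$ (this is why the construction splits some parallelepipeds $g^k_{i_1,\ldots,i_l}$ further when an edge exceeds $2h_k^*$), and that the perturbation from $f$ to $\tilde f$ propagates only through one layer interface at a time so the accumulated error does not blow up with $N$. Once one grants the detailed bookkeeping already carried out in the proof of Theorem \ref{Theorem 3.1}, the rest is the routine derivative-bound-into-interpolation-error computation sketched above, and I would not grind through it in full.
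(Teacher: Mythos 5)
Your proposal follows the paper's own argument essentially verbatim: the same layer decomposition $\Delta^k$ with $n\asymp N^{v(l-1)}$ and no $M_k$-subdivision, the same per-layer interpolation error bounds ($cN^{-s}(\ln N)^{u-1}$ for $1\le k\le N-1$ via the $s$-th derivative bound, and $ch_0^r|\ln^u h_0|\le cN^{-s}\ln^u N$ on the boundary layer via the Taylor expansion (3.6)), and the same deferral to the continuity-repair construction of Theorem 3.1 to upgrade to a continuous spline with $s^l$ nodes per subdomain. The only quibble is that the order-$r$ derivatives in Definition 1.6 carry $\ln^u$ rather than $\ln^{u-1}$, but since your inner-layer estimate correctly uses the order-$s$ bound this does not affect the proof.
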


To estimate the Kolmogorov widths $ d_n(Q_{r\gamma}^u(\Omega, 1),C)$ for $u=1,2,\ldots,$ we use 
Definition \ref{Definition 1.7} for $Q^u_{r\gamma}(\Omega, 1)$ and note that $\gamma = s-r-1+\mu, \mu =1+\gamma - 
\lceil \gamma \rceil$.

\begin{theorem} \rm
\label{Theorem 3.4} Let $\Omega = [-1,1]^l,$ $l \geq 2,$
$ u=1,2,\cdots, $ $v=s/(s-\gamma).$
Then
\begin{equation}
 d_n( Q^u_{r \gamma}(\Omega,1),C) \le c
n^{-s/l}
\label{(3.22)}
\end{equation}
${\rm if}  \quad v<l/(l-1).$
\begin{equation}
 d_n(Q^u_{r\gamma}(\Omega,1),C)\le cn^{-s/l} (\ln n)^{us/(r+1-\mu)}
\label{(3.23)}
\end{equation}
${\rm for} \quad  lu/(r+1-\mu)\ge ul/s+1,$
\begin{equation}
 d_n(Q^u_{r\gamma}(\Omega,1),C)\le c n^{-s/l}(\ln n)^{(ul+s)/l}
\label{(3.24)}
\end{equation}
${\rm for} \quad lu/(r+1-\mu) < ul/s+1, \quad  {\rm if} \quad v=l/(l-1).$
\end{theorem}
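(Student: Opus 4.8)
The plan is to repeat, with adapted parameters, the spline construction from the proof of Theorem~\ref{Theorem 3.1} (the part that builds first a non-continuous and then a continuous local spline), combining it with the choice of refinement parameters used in the proof of Theorem~\ref{Theorem 2.2}. Concretely: partition $\Omega$ into the layers $\Delta^k=\{t:(k/N)^v\le d(t,\Gamma)\le((k+1)/N)^v\}$, $k=0,1,\dots,N-1$, decompose each $\Delta^k$ into cubes and parallelepipeds $\Delta^k_{i_1,\dots,i_l}$ with edges between $h_k$ and $2h_k$, $h_k=((k+1)/N)^v-(k/N)^v$, exactly as in Theorem~\ref{Theorem 3.1}; then divide every edge of $\Delta^k_{i_1,\dots,i_l}$ into $M_k$ equal subintervals, where now $M_0=\lceil(\ln N)^{u/(r+1-\mu)}\rceil$ and $M_k=\lceil(\ln N/k)^{u/s}\rceil$ for $k=1,\dots,N-1$; and let $f_N$ be the spline glued from the tensor-product interpolating polynomials $P_{s,\dots,s}(f,\Delta^k_{i_1,\dots,i_l;j_1,\dots,j_l})$. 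This is the analogue of Theorem~\ref{Theorem 3.1} with the exponent $r$ replaced by the effective boundary order $r+1-\mu$, which by $\gamma=s-r-1+\mu$ equals $s-\gamma$, and with the interior log exponent $u-1$ replaced by $u$, reflecting the difference between Definitions~\ref{Definition 1.6} and~\ref{Definition 1.7}.

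For the approximation error I would argue verbatim as in Theorems~\ref{Theorem 2.2} and~\ref{Theorem 3.1}. On a cell contained in $\Delta^k$ with $1\le k\le N-1$, using the bound on the top-order ($|v|=s$) derivatives from Definition~\ref{Definition 1.7}, the interpolation error is $\le c(h_k/M_k)^s(1+|\ln^u(k/N)^v|)(N/k)^{v\gamma}$; since $v=s/(s-\gamma)$ one has $h_k^{\,s}(N/k)^{v\gamma}\asymp N^{-s}$, while $(1+|\ln^u(k/N)^v|)/M_k^{\,s}\le c$ by the choice of $M_k$, so the error is $\le cN^{-s}$. On the boundary cell of $\Delta^0$ I would use the Taylor formula (\ref{(3.6)}) of order $r$ (legitimate because derivatives of order $\le r$ are bounded by $1$); the order-$(r+1)$ derivatives are bounded by $(1+|\ln^u d(t,\Gamma)|)/(d(t,\Gamma))^{\mu}$, so, because $\mu<1$, the remainder is $\le ch_{00}^{\,r+1-\mu}|\ln^u h_{00}|$ with $h_{00}=N^{-v}/M_0$; since $r+1-\mu=s-\gamma$ and $v(s-\gamma)=s$ and $M_0^{\,r+1-\mu}\asymp\ln^u N$, this is again $\le cN^{-s}$. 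The remaining $\Delta^0$-cells and the layers at the other faces of $\Omega$ are treated by the same estimates, and the passage to a \emph{continuous} spline $f_N^\ast$ uses the modification of the partition near $\Delta^{N-1},\Delta^{N-2},\dots$ described in the proof of Theorem~\ref{Theorem 3.1}. Altogether $\|f-f_N^\ast\|_{C(\Omega)}\le cN^{-s}$.

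It remains to count the nodes, $n\asymp\sum_{k=0}^{N-1}(\text{number of cubes of }\Delta^k)\,M_k^{\,l}$, and this is the delicate step. By (\ref{(3.4)})--(\ref{(3.5)}) and (\ref{(3.10)})--(\ref{(3.11)}), the number of cubes of $\Delta^0$ is $\asymp N^{v(l-1)}$, and $\sum_{k\ge1}(\text{number of cubes of }\Delta^k)\asymp N^l$ for $v<l/(l-1)$, respectively $\asymp N^l\ln N$ for $v=l/(l-1)$. If $v<l/(l-1)$, then $v(l-1)<l$, so the term $N^{v(l-1)}M_0^{\,l}=N^{v(l-1)}(\ln N)^{lu/(r+1-\mu)}=o(N^l)$ is negligible, and since the exponent $(v-1)(l-1)<1$ the logarithmic weights in $\sum_{k\ge1}k^{-(v-1)(l-1)}(\ln N/k)^{ul/s}$ are absorbed into the dominant range $k\asymp N$, giving $n\asymp N^l$ and hence $d_n(Q^u_{r\gamma}(\Omega,1),C)\le cN^{-s}\asymp cn^{-s/l}$, i.e.\ (\ref{(3.22)}). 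If $v=l/(l-1)$, then $v(l-1)=l$ and $(v-1)(l-1)=1$, so the $\Delta^0$-part contributes $N^l(\ln N)^{lu/(r+1-\mu)}$ and the interior layers contribute $N^l\sum_{k\ge1}k^{-1}(\ln N/k)^{ul/s}\asymp N^l(\ln N)^{ul/s+1}$; thus $n\asymp N^l(\ln N)^{\max\{lu/(r+1-\mu),\,ul/s+1\}}$. Inverting this relation and substituting into $d_n\le cN^{-s}$ produces $n^{-s/l}(\ln n)^{us/(r+1-\mu)}$ when $lu/(r+1-\mu)\ge ul/s+1$ and $n^{-s/l}(\ln n)^{u+s/l}=n^{-s/l}(\ln n)^{(ul+s)/l}$ otherwise, which are exactly (\ref{(3.23)}) and (\ref{(3.24)}).

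The main obstacle is this last node count in the borderline case $v=l/(l-1)$: one must track the precise power of $\ln N$ coming from the boundary layer $\Delta^0$ (where $M_0$ is forced by the singularity exponent $\mu$ of the order-$(r+1)$ derivatives, through $r+1-\mu=s-\gamma$) against the power coming from summing $M_k^{\,l}$ over the interior layers $k\ge1$, and then invert $n\asymp N^l(\ln N)^{\max\{\,\cdot\,\}}$ correctly — the maximum being responsible for the two cases in the statement. Everything else is a routine repetition of the estimates already carried out in the proofs of Theorems~\ref{Theorem 2.2} and~\ref{Theorem 3.1}.
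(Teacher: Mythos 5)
Your proposal follows essentially the same route as the paper's own proof: the identical layer decomposition with $M_0=\lceil(\ln N)^{u/(r+1-\mu)}\rceil$, $M_k=\lceil(\ln(N/k))^{u/s}\rceil$, the same cellwise error bounds $cN^{-s}$ (including the boundary-cell Taylor argument via $r+1-\mu=s-\gamma$), the same node count splitting into $v<l/(l-1)$ and $v=l/(l-1)$ with the maximum of $lu/(r+1-\mu)$ and $ul/s+1$ governing the two logarithmic exponents, and the same appeal to the continuous-spline modification from Theorem~\ref{Theorem 3.1}. The argument is correct and matches the paper's proof in all essentials.
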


\begin{proof} The proof of the theorem is similar to the proof of Theorem \ref{Theorem 3.1}.
First, we construct a local not necessarily continuous spline which approximates the functions of 
the class $Q^u_{r \gamma}(\Omega,1))$ and has the error given in the right-hand sides of 
(\ref{(3.22)}) -- (\ref{(3.24)}).  Afterwards we construct a continuous local spline having 
the same error of approximation. 

We decompose the domain $\Omega$ into subdomains
$\Delta^k_{i_1,\ldots,i_l;j_1,\cdots,j_l},$ $k=0,1,\ldots,N-1$ following the procedure 
which was described more than once in this section. (For instance, 
see the proof of Theorem \ref{Theorem 3.1}).In doing so, we divide each  
edge of $\Delta^k_{i_1,\ldots,i_l},$  into $M_k$ equal subintervals, 
$
M_k = 
\lceil (\ln N)^{u/(r+1-\mu)}\rceil,  k=0, \ \ 
M_k=\lceil (\ln (N/k))^{u/s}\rceil,  k=1,2,\ldots, N-1
$
and  pass the planes parallel to the coordinate planes throught the points of division.
To interpolate $f$ in each of the obtained cubes or parallelepipedes  
$\Delta^k_{i_1,\ldots,i_l;j_1,\ldots,j_l}$ we use  
the polynomial $P_{s,\ldots,s}(f,\Delta^k_{i_1,\ldots,i_l;j_1,\ldots,j_l})$
described in this section.  
Hence  the  local spline $f_N$ is composed of the polynomials
$P_{s,\ldots,s}(f;\Delta^k_{i_1,\ldots,i_l;j_1,\cdots,j_l}),$ $k=0,1,\ldots,N-1.$

\begin{rmk} We will use polynomials $P_{s,\ldots,s}(f,\Delta^k_{i_1,\ldots,i_l;j_1,\ldots,j_l})$ when $s \ge r+2,$
and polynomials  $P_{s+1,\ldots,s+1}(f,\Delta^k_{i_1,\ldots,i_l;j_1,\ldots,j_l})$ when $s=r+1.$ 
Without loss of generality we demonstrate our computations when $s \ge r+2.$
\end{rmk}

Estimating  an approximation $f_N$ to  
$f\in  Q^u_{r\gamma}(\Omega,1)$ we obtain for $1\leq k\leq N-1$
\begin{equation*}
\|f-P_{s,\ldots,s}
(f,\Delta^k_{i_1,\ldots,i_l;j_{1,\ldots,j_l}})\|_{C(\Delta^k_{i_1,\ldots,i_l;
j_1,\ldots,j_l})} \leq
\end{equation*}
\begin{equation}
\leq c\left(\left(\left(\frac{k+1}{N}\right)^v-\left(\frac{k}{N}\right)^v\right)
\frac{1}{(\ln \frac{N}{k})^{u/s}}\right)^s\frac{(1+|\ln (\frac{k}{N})^v|)^{u}}
{\left(\frac{k}{N}\right)^{v\gamma}}\leq c\frac{1}{N^{s}}.
\label{(3.25)}
\end{equation}

If $k=0$, then
$
\|f-P_{s,\ldots,s}
(f,\Delta^0_{i_1,\ldots,i_l;j_1,\ldots,j_l})\|_{C(\Delta^0_{i_1,\ldots,i_l;
j_1,\ldots,j_l})} \leq \\
\leq cE_{r+1,\ldots,r+1}(f,\Delta^0_{i_1,\ldots,i_l;j_1,\ldots,j_l})\lambda_s^l.
$

Using  Taylor's expansion with the remainder in integral form
we have
\begin{equation*}
E_{r+1,\ldots,r+1}(f,\Delta^0_{i_1,\ldots,i_l;j_1,\ldots,j_l})
\leq ch_{00}^{r+1-\mu}\int\limits_0^1(1-\tau)^{r-1}(1+|\ln^u(\tau h_{00})|)d\tau\leq
\end{equation*}
\begin{equation*}
\leq ch_{00}^{r+1-\mu}\ln^uh_{00}\leq c N^{-v(r+1-\mu)}=c N^{-s},
\end{equation*}
where $h_{00}=h_0/M_0,\ \ h_0=(1/N)^v.$

Hence,
\begin{equation}
\|f-P_{s,\ldots,s}(f,\Delta^0_{i_1,\ldots,i_l;
j_1,\ldots,j_l})\|_{C(\Delta^0_{i_1,\ldots,i_l;j_1,\ldots,j_l})}
\leq c N^{-s}. 
\label{(3.26)}
\end{equation}

Combining (\ref{(3.25)}) -- (\ref{(3.26)}) gives
\begin{equation}
\|f-f_N\|\leq c N^{-s}.
\label{(3.27)}
\end{equation}

Now we estimate the number of nodes used in constructing  $f_N.$
As in the proof of Theorem \ref{Theorem 3.1}, we study two cases i) $v<l/(l-1)$ and ii) $v=l/(l-1)$.

{ i).} Let $v<l/(l-1)$. The estimate follows immediately from the chain of inequalities
\begin{equation*}
n\leq m \sum\limits^{N-1}_{k=1} \left(\frac{2-2(\frac{k}{N})^v}{(\frac{k+1}{N})^v - (\frac{k}{N})^v}\right)^{l-1} M_k^l+
2mN^{v{(l-1)}} \lceil\ln N\rceil^{lu/(r+1-\mu)} \leq
\end{equation*}
\begin{equation*}
\leq c N^{v(l-1)} (\ln N)^{lu/(r+1-\mu)} + c\sum \limits_{k=1}^{N-1}
 \left( \frac{2N^v-2k^v}{v(k+\theta)^{v-1}} \right)^{l-1}\left(1+\left(  \ln \frac{N}{k}\right)^{\frac{u}{s}} \right)^l\le
\end{equation*}
\begin{equation*}
\leq c N^{v(l-1)} (\ln N)^{lu/(r+1-\mu)}+c\sum \limits_{k=1}^{N-1}
\frac{N^{v(l-1)}}{k^{(v-1)(l-1)}}\left(\left(( \ln \frac{N}{k} \right)^{\frac{ul}{s}}+1\right)\le cN^l,
\end{equation*}
where $m$ is the number of faces of  $\Delta^k_{i_1,\cdots,i_l}.$

One can obtain  that $n \geq cN^l$ is true in a similar way. Therefore,
\begin{equation}
n = cN^l.
\label{(3.28)}
\end{equation}

The inequalities (\ref{(3.27)}) -- (\ref{(3.28)}) yield
$
\|f - f_N\| \leq c n^{-s/l},
$
where $n $ is the number of nodes of the local spline.

{ ii).} Let $v=l/(l-1).$
As we have already derived for $v<l/(l-1),$
the upper bound follows immediately from the chain of inequalities
\begin{equation*}
n\leq m\sum\limits^{N-1}_{k=1}\left( \frac{2-2(\frac{k}{N})^v}{(\frac{k+1}{N})^v-
(\frac{k}{N})^v} \right)^{l-1}M_k^l+2mN^{v(l-1)}\lceil\ln N\rceil^{lu/(r+1-\mu)}\leq
\end{equation*}
\begin{equation*}
\leq cN^l(\ln N)^{lu/(r+1-\mu)}+cN^l(\ln N)^{(ul/s)+1}.
\end{equation*}

It remains to express $N$ in terms of $n.$ It is necessary to study two cases:

{ i).}the estimate $N\le n^{1/l}/(ln n)^{u/(r+1-\mu)}$ holds, if $lu/(r+1-\mu) \ge ul/s+1$;

{ ii).}$N\le n^{1/l}/(ln n)^{(ul+s)/(sl)}$ holds, if $lu/(r+1-\mu) < ul/s+1$.

Thus, for  $lu/(r+1-\mu)\ge ul/s+1$
\begin{equation}
\|f-f_N\|\le cn^{-s/l}(\ln n)^{us/(r+1-\mu)};
\label{(3.29)}
\end{equation}
${\rm for} \quad lu/(r+1-\mu) < ul/s+1$
\begin{equation}
\|f-f_N\|\le cn^{-s/l}(\ln n)^{(ul+s)/l}.
\label{(3.30)}
\end{equation}

To obtain the upper estimate of the Kolmogorov widths of the functional class
$Q^u_{r \gamma}(\Omega,1)),$ we construct a continuous local spline 
which has the error  given in the right-hand sides of (\ref{(3.29)}), (\ref{(3.30)}).  
For this purpose, we repeart the construction of the continuous local
spline provided in the proof of Theorem \ref{Theorem 3.1}. 
One can show that the continuous local spline that approximates  
the functions of $Q^u_{r \gamma}(\Omega,1)$ has the error given in the 
right-hand sides of  (\ref{(3.29)}), (\ref{(3.30)}). 
\end{proof}

\begin{theorem} \rm
\label{Theorem 3.5} Let $\Omega = [-1,1]^l,$ $l \geq 2,$
$u=1,2,\cdots,$ $v=s/(s-\gamma),$ $v<l/(l-1).$ 
The estimate 
$
\delta_n( Q^u_{r \gamma}(\Omega,1)) \geq c n^{-s/l}
$
holds.
\end{theorem}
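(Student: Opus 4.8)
The plan is to derive this lower bound purely by a monotonicity-under-inclusion argument, exactly as was done for the one-dimensional analogue (cf. the opening of the proof of Theorem \ref{Theorem 2.2}) and for the case $v<l/(l-1)$ of Theorem \ref{Theorem 3.1}, see \eqref{(3.15)}. The only ingredients needed are the inclusion $Q_{r\gamma}(\Omega,1)\subset Q^u_{r\gamma}(\Omega,1)$, the fact that the Babenko width is monotone with respect to set inclusion, and the already-established estimate for $\delta_n(Q_{r\gamma}(\Omega,1))$ from Theorem \ref{Theorem 1.3}.

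First I would verify the inclusion by comparing Definitions \ref{Definition 1.4} and \ref{Definition 1.7}. Both classes consist of functions $f\in C^r(\Omega)$ with $\max_{t\in\Omega}|D^vf(t)|\leq 1$ for $0\leq|v|\leq r$, and both use the same parameters $s=r+\lceil\gamma\rceil$, $\zeta=\lceil\gamma\rceil-\gamma$. For $r<|v|\leq s$ a representative of $Q_{r\gamma}(\Omega,1)$ satisfies $|D^vf(t)|\leq (d(t,\Gamma))^{-(|v|-r-\zeta)}$, while the bound required in Definition \ref{Definition 1.7} is this same quantity multiplied by $(1+|\ln^u d(t,\Gamma)|)\geq 1$. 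Hence every $f\in Q_{r\gamma}(\Omega,1)$ lies in $Q^u_{r\gamma}(\Omega,1)$.

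Next I would invoke monotonicity of $\delta_n$: if $X_1\subset X_2$ then $\delta_n(X_1)\leq\delta_n(X_2)$, because the restriction to $X_1$ of any continuous map $\Pi:X_2\to R^n$ is admissible in the infimum defining $\delta_n(X_1)$ and the fibers $\Pi^{-1}\Pi(x)\cap X_1$ are contained in $\Pi^{-1}\Pi(x)\cap X_2$, so their diameters do not increase. Applying this to $X_1=Q_{r\gamma}(\Omega,1)$, $X_2=Q^u_{r\gamma}(\Omega,1)$ gives $\delta_n(Q^u_{r\gamma}(\Omega,1))\geq\delta_n(Q_{r\gamma}(\Omega,1))$. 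Since $v=s/(s-\gamma)<l/(l-1)$, Theorem \ref{Theorem 1.3} yields $\delta_n(Q_{r\gamma}(\Omega,1))\asymp n^{-s/l}$, and therefore $\delta_n(Q^u_{r\gamma}(\Omega,1))\geq c\,n^{-s/l}$, as claimed.

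I do not expect any real obstacle here: the argument is elementary once the inclusion is observed, and the one delicate point — that $\delta_n$ does not increase under passing to a subclass — is handled by the restriction argument above and is already used implicitly elsewhere in the paper. The substantive work (the matching upper bound, which would complete a two-sided estimate) is not part of this statement; only the lower bound is asserted, so the proof ends at this inequality.
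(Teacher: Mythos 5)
Your proposal is correct and follows essentially the same route as the paper: the paper's proof of Theorem \ref{Theorem 3.5} is precisely the inclusion $Q_{r\gamma}(\Omega,1)\subset Q^u_{r\gamma}(\Omega,1)$ combined with the lower bound $\delta_n(Q_{r\gamma}(\Omega,1))\geq cn^{-s/l}$ from Theorem \ref{Theorem 1.3}. Your added justification of the monotonicity of $\delta_n$ under set inclusion is a detail the paper leaves implicit, but it is the same argument.
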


\begin{proof} 
It is easy to see that 
$Q_{r \gamma}(\Omega,1) \subset Q^u_{r \gamma}(\Omega,1)$. 
From Theorem \ref{Theorem 1.3}
the estimate $\delta_n( Q_{r \gamma}(\Omega,1)) \geq c n^{-s/l}$
follows. Therefore  $\delta_n( Q^u_{r \gamma}(\Omega,1)) \geq c n^{-s/l}.$ 
\end{proof}

\begin{theorem} \rm
\label{Theorem 3.6} 
 Let $\Omega = [-1,1]^l,$ $l \geq 2,$ $u=1,2,\cdots,$ $v=s/(s-\gamma),$ $v= l/(l-1).$ Then
$
\delta_n(Q^u_{r\gamma}(\Omega, 1)) \geq c n^{-s/l}(\ln n)^{u+s/l}.
$
\end{theorem}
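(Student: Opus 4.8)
The plan is to establish the lower bound for the Babenko width $\delta_n(Q^u_{r\gamma}(\Omega,1))$ by the standard Babenko technique used already in Theorems \ref{Theorem 3.1} and \ref{Theorem 3.2}: exhibit a large family of ``bump'' functions supported on disjoint subdomains, all of which lie in $Q^u_{r\gamma}(\Omega,1)$ after suitable normalization, and whose span has dimension roughly $n$. Since any continuous map $\Pi$ into $R^n$ must identify two distinct sign-combinations of these bumps when their number exceeds $n$, the diameter of a fiber is bounded below by twice the common sup-norm of a bump, giving the claimed estimate.

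First I would reuse the decomposition of $\Omega=[-1,1]^l$ into the shells $\Delta^k=\{t:(k/N)^v\le d(t,\Gamma)\le((k+1)/N)^v\}$, $k=0,1,\dots,N-1$, exactly as in the proof of Theorem \ref{Theorem 3.1}, and then subdivide each $\Delta^k$ into cubes/parallelepipeds $\Delta^k_{i_1,\dots,i_l}$ of edge length comparable to $h_k=((k+1)/N)^v-(k/N)^v$, and further into $\Delta^k_{i_1,\dots,i_l;j_1,\dots,j_l}$ by dividing each edge into $M_k$ equal pieces, with $M_0=\lceil(\ln N)^{u/(r+1-\mu)}\rceil$ and $M_k=\lceil(\ln(N/k))^{u/s}\rceil$ for $k\ge 1$ (the same $M_k$ as in Theorem \ref{Theorem 3.4}). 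By the computation already carried out for Theorem \ref{Theorem 3.4}, case $v=l/(l-1)$, the total number $n$ of these subdomains satisfies $n\asymp N^l(\ln N)^{lu/(r+1-\mu)}+N^l(\ln N)^{ul/s+1}$; under the hypothesis $v=l/(l-1)$ together with the relevant regime this is $\asymp N^l(\ln N)^{(ul+s)/l}$ after taking $l$-th roots consistently, so $N^{-s}\asymp n^{-s/l}(\ln n)^{u+s/l}$ — which is the target order.

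Next, on each $\Delta^k_{i_1,\dots,i_l;j_1,\dots,j_l}=[b_{i_1,j_1},b_{i_1,j_1+1};\dots;b_{i_l,j_l},b_{i_l,j_l+1}]$ I would introduce the bump
\[
\varphi^k_{i_1,\dots,i_l;j_1,\dots,j_l}(t)=A_k\,\frac{\bigl((t_1-b_{i_1,j_1})(b_{i_1,j_1+1}-t_1)\cdots(t_l-b_{i_l,j_l})(b_{i_l,j_l+1}-t_l)\bigr)^s}{(h_k/M_k)^{(2l-1)s}\,((k+1)/N)^{v\gamma}}\Bigl(1+\bigl|\ln^{u}\bigl((k+1)/N\bigr)^v\bigr|\Bigr)
\]
on the subdomain and $0$ elsewhere, with $A_k$ chosen so that $|D^s\varphi^k_{i_1,\dots,i_l;\dots}|$ does not exceed $(1+|\ln^u(d(t,\Gamma))|)/(d(t,\Gamma))^{v(s-r)}$ throughout the subdomain — exactly the growth allowed by Definition \ref{Definition 1.7} (here one uses that $\gamma=s-r-1+\mu$, so $s-r=|v|-r$ at the top order and the exponent matches). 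One checks, just as in Theorem \ref{Theorem 3.1}, that all lower-order derivatives are bounded by $1$ and that $\max|\varphi^k_{\dots}|\ge cN^{-s}$ uniformly in $k$, using $h_k^s\asymp(k+\theta)^{(v-1)s}N^{-vs}$ and cancelling the $(\ln(N/k))^{u}$ factors from $M_k^{-s}$ against the $(1+|\ln(N/k)^v|)^u$ factor in the numerator. Any linear combination $\xi=\sum C^k_{\dots}\varphi^k_{\dots}$ with $|C^k_{\dots}|\le 1$ then lies in $Q^u_{r\gamma}(\Omega,1)$ because the bumps have disjoint supports. Finally, repeating the argument of \cite{[2],[3],[24]} (a continuous map $\Pi:Q^u_{r\gamma}(\Omega,1)\to R^n$ cannot separate all $2^{n+1}$ sign patterns, so some fiber contains $\xi$ and $-\xi'$ differing on at least one bump) yields $\delta_n(Q^u_{r\gamma}(\Omega,1))\ge cN^{-s}\ge cn^{-s/l}(\ln n)^{u+s/l}$.

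The main obstacle I anticipate is the bookkeeping in the regime analysis of $n$ versus $N$: one must verify that with $v=l/(l-1)$ the dominant term in $n\asymp N^l(\ln N)^{lu/(r+1-\mu)}+N^l(\ln N)^{ul/s+1}$ inverts to give precisely the exponent $u+s/l$ on $\ln n$ (not $us/(r+1-\mu)$), and to confirm that the constant $A_k$ can indeed be chosen $N$-independent — this requires that the normalized product-of-parabolas profile on a box of the given aspect ratio has its $s$-th order mixed derivatives controlled uniformly, which follows because rescaling each edge to unit length turns $\varphi^k_{\dots}$ into a fixed polynomial whose derivatives are absolute constants, and the scaling factors $(h_k/M_k)^{-(2l-1)s}$ versus $((k+1)/N)^{-v\gamma}$ are exactly balanced by the defining inequality for $A_k$. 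Everything else is a direct transcription of the proofs of Theorems \ref{Theorem 3.1} and \ref{Theorem 3.6} (the $\bar Q$-case with $u$ replaced appropriately) to the present class.
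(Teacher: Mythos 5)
Your overall strategy is the paper's strategy: shell decomposition $\Delta^k$, subdivision into boxes $\Delta^k_{i_1,\dots,i_l;j_1,\dots,j_l}$, normalized product-of-parabolas bumps, and the sign-pattern argument of \cite{[2],[3],[24]}. However, there is a genuine gap in your choice of $M_0$. You take $M_0=\lceil(\ln N)^{u/(r+1-\mu)}\rceil$, imported from the upper-bound spline construction of Theorem \ref{Theorem 3.4}, whereas the lower-bound construction requires $M_0=\lceil(\ln N)^{u/s}\rceil$ (i.e.\ the same exponent $u/s$ as for $k\ge 1$, with $N/k$ replaced by $N$). The exponent $u/(r+1-\mu)$ is dictated by the Taylor-remainder estimate near the boundary, which is irrelevant here; for the lower bound the only constraints are that the bumps lie in $Q^u_{r\gamma}(\Omega,1)$ and that their heights and their number are balanced optimally. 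Since $r+1-\mu<s$ (as $\gamma$ is a non-integer, $\mu\in(0,1)$ and $s\ge r+1$), your $M_0$ is strictly larger, and this damages the argument in two ways. First, the height of the $k=0$ bumps is $\max\varphi^0\asymp A_0(h_0/M_0)^sN^{v\gamma}(\ln N)^u=N^{-s}(\ln N)^{u}/M_0^{s}$, which with your $M_0$ equals $N^{-s}(\ln N)^{u(1-s/(r+1-\mu))}=o(N^{-s})$; since the Babenko/Lemma \ref{Lemma 1.1} argument yields only the \emph{minimum} bump height, your bound degrades by a factor $(\ln n)^{-u(s/(r+1-\mu)-1)}$ in every regime. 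Second, the point you defer as ``bookkeeping'' --- that the term $N^l(\ln N)^{lu/(r+1-\mu)}$ in the count is dominated by $N^l(\ln N)^{ul/s+1}$ --- is simply false when $lu/(r+1-\mu)\ge ul/s+1$ (a nonempty regime, and precisely the one where Theorem \ref{Theorem 3.7} leaves a gap between upper and lower bounds); there the inversion gives $N^{-s}\asymp n^{-s/l}(\ln n)^{us/(r+1-\mu)}$, not $n^{-s/l}(\ln n)^{u+s/l}$, and combined with the shrunken $k=0$ bumps the final estimate falls short of the claim.

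With the correction $M_0=\lceil(\ln N)^{u/s}\rceil$ everything you wrote goes through: all bumps then have height $\ge cN^{-s}$, the $k=0$ shell contributes only $N^l(\ln N)^{lu/s}$ boxes, the total is $n\asymp N^l(\ln N)^{1+ul/s}$ unconditionally, and inverting gives $N^{-s}\asymp n^{-s/l}(\ln n)^{u+s/l}$ as required. (A small secondary slip: the normalization constraint on $A_k$ should read $|D^s\varphi^k|\le(1+|\ln^u((k+1)/N)^v|)/((k+1)/N)^{v\gamma}$, the exponent $\gamma=s-r-\zeta$ coming from Definition \ref{Definition 1.7}; your exponent $v(s-r)$ on $d(t,\Gamma)$ is not the one the class prescribes, though your displayed bump formula already carries the correct factor $((k+1)/N)^{v\gamma}$.)
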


\begin{proof} We decompose the domain $\Omega$ into subdomains
$\Delta^k_{i_1,\ldots,i_l; j_1,\ldots,j_l},$ $k=0,1,\ldots,N-1$ following the procedure
which was described in the proof of the
Theorem \ref {Theorem 3.1} (see the part of Babenko widths estimates for $v=l/(l-1)$).

Now, we introduce
$
M_k = 
\lceil(\ln N)^{u/s}\rceil, \, k=0;\ \  M_k=
\lceil(\ln (N/k))^{u/s}\rceil, \, k=1,2,\ldots,N-1. 
$

Let estimate the number   $\Delta^k_{i_1,\ldots,i_l; j_1,\ldots,j_l},$  $k=0,1,\cdots,N-1.$ Clearly
$$
n \asymp m\sum\limits^{N-1}_{k=1}
\left(\frac{2-2\left(\frac{k}{N}\right)^v}{\left(\frac{k+1}{N}\right)^v - \left(\frac{k}{N}\right)^v}\right)^{l-1} 
M^l_k + 2m N^{v(l-1)}(\ln N)^{lu/s} \asymp
$$
$$
\asymp N^l(\ln N)^{lu/s} + N^{l-1}\int\limits^N_1
\frac{N}{x}\left(\ln \frac{N}{x}\right)^{ul/s}dx \asymp N^l (\ln N)^{1+ul/s}.
$$

Let $\Delta^k_{i_1,\ldots,i_l; j_1,\ldots,j_l} = [b_{i_1,j_1}, b_{i_1, j_1+1}; \cdots; b_{i_l,j_l}, b_{i_l, j_l+1}].$
Introduce the functions
\begin{equation*}
\varphi^k _{i_1,\ldots,i_l; j_1,\ldots,j_l}(t) = 
\end{equation*}
\begin{equation*}
=\begin{cases}
A_k\frac
{((t_1 - b_{i_1, j_1})(b_{i_1, j_1+1} - t_1) \cdots (t_l - b_{i_l, j_1})(b_{i_l, j_1+1} - t_l) )^s}{(h_k/M_k)^{(2l-1)s}((k+1)/N)^{v\gamma}}
\left(1+ \left|\ln^u \left(\frac{k+1}{N}\right)^v\right|\right),\\  t\in \Delta^k_{i_1,\cdots,i_l;j_1,\cdots,j_l},
0,  t\in \Omega\setminus \Delta^k_{i_1,\cdots,i_l;j_1,\cdots,j_l},
\end{cases}
\end{equation*}
$h_k=((k+1)/N)^v-(k/N)^v, k=0,1,\ldots,N-1.$ Constants $A_k,$ $k=0,1,\ldots,N-1,$ are chosen such that
$$
\left| D^s\varphi^k_{i_1,\cdots,i_l;j_1,\cdots,j_l}\right| \leq 
\frac{1}{((k+1)/N)^{v\gamma}}
\left(1+\left|\ln^u\left(\frac{k+1}{N}\right)^v\right|\right).
$$
Obviously, such constants exist and do not depent on $N, u, \gamma.$ 

Let estimate the maximum values of  $\varphi^k _{i_1,\ldots,i_l; j_1,\ldots,j_l}(t).$ 
Clearly,
$$
\varphi^k _{i_1,\ldots,i_l; j_1,\ldots,j_l}(t)  \geq A_k\left(\frac{h_k}{M_k}\right)^s \left(\frac{N}{k+1}\right)^{v\gamma}
\left(1+|\ln^u \left(\frac{k+1}{N}\right)^v|\right) =
$$
$$
= A_k \left(\left(\frac{k+1}{N}\right)^v - \left(\frac{k}{N}\right)^v\right)^s
\frac{1}{\left(\left(\ln \frac{N}{k}\right)^{u/s} +1\right)^s} \left(\frac {N}{k+1}\right)^{v\gamma}
\left(1+\left|\ln^u \left(\frac{k+1}{N}\right)^v\right|\right) \ge \frac{c}{N^s}
$$
for $k=1,2,\ldots,N-1;$
$$
\varphi^0_{i_1,\ldots,i_l; j_1,\ldots,j_l}(t)  \geq A_0
\left(\frac{h_0}{M_0}\right)^s \left(\frac{N}{1}\right)^{v\gamma}\left(1+\left|\ln^u \left(\frac{1}{N}\right)^v\right|\right) \geq c
\frac{1}{N^s}.
$$

Let $\xi(t)$ be a linear combination 
$
\xi(t) = \sum\limits_{k, i_1,\ldots,i_l; j_1,\ldots,j_l} C^k_{i_1,\ldots,i_l; j_1,\ldots,j_l}
\varphi^k_{i_1,\ldots,i_l; j_1,\ldots,j_l}(t),
$\\
where $|C^k_{i_1,\ldots,i_l; j_1,\ldots,j_l}| \leq 1.$ Here the summation is taken over all domains 
$\Delta^k_{i_1,\ldots,i_l; j_1,\ldots,j_l}$ of $\Omega.$ 

Repeating the arguments presented in \cite{[2],[3],[24]} we have 
$
\delta_n(Q^u_{r \gamma},(\Omega,1) ) \geq c n^{-s/l}(\ln n)^{u +s/l}.
$
\end{proof}

\begin{rmk}
The estimate 
$
d_n( Q^u_{r\gamma}(\Omega,1),C) \geq c n^{-s/l}(\ln n)^{u +s/l} 
$
follows from the definition of $\xi(t)$ and Lemma \ref{Lemma 1.1}.
\end{rmk}

Combining the statements of  Theorems \ref {Theorem 3.4} -- \ref {Theorem 3.6} we have the following  

\begin{theorem} \rm
\label{Theorem 3.7} Let 
$\Omega = [-1,1]^l,$ $l \geq 2,$ $u=1,2,\cdots,$ $v=s/(s-\gamma).$ Then\\
$
\delta_n(Q^u_{r \gamma} (\Omega,1)) \asymp 
d_n(Q^u_{r \gamma} (\Omega,1),C) \asymp 
n^{-s/l}
$
if $v < l/(l-1);$\\
$
cn^{-s/l}(\ln n)^{(ul +s)/l}\le\delta_n(Q^u_{r \gamma} (\Omega,1)) \le 
2d_n(Q^u_{r \gamma} (\Omega,1), C) \le c n^{-s/l}(\ln n)^{u s/(r+1-\mu)} 
$\\
for $lu/(r+1-\mu) \geq ul/s + 1,$\\
$
\delta_n(Q^u_{r \gamma} (\Omega,1)) \asymp  
d_n(Q^u_{r \gamma} (\Omega,1), C) \asymp  n^{-s/l}(\ln n)^{(ul +s)/l}
$
for $lu/(r+1-\mu) < ul/s + 1,$\\ if $v = l/(l-1).$
\end{theorem}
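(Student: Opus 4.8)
The plan is to assemble Theorems \ref{Theorem 3.4}, \ref{Theorem 3.5}, \ref{Theorem 3.6} together with Lemma \ref{Lemma 1.2}. The upper bounds on the Kolmogorov widths $d_n(Q^u_{r\gamma}(\Omega,1),C)$ are exactly the content of Theorem \ref{Theorem 3.4}, while the lower bounds on the Babenko widths $\delta_n(Q^u_{r\gamma}(\Omega,1))$ are supplied by Theorem \ref{Theorem 3.5} in the range $v<l/(l-1)$ and by Theorem \ref{Theorem 3.6} in the range $v=l/(l-1)$. Lemma \ref{Lemma 1.2} provides the inequality $\delta_n(X)\le 2d_n(X,C)$, which glues the two sides of each estimate together into a single chain.

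First, for $v<l/(l-1)$, I would combine the lower bound $\delta_n(Q^u_{r\gamma}(\Omega,1))\ge cn^{-s/l}$ of Theorem \ref{Theorem 3.5} with the upper bound $d_n(Q^u_{r\gamma}(\Omega,1),C)\le cn^{-s/l}$ from (\ref{(3.22)}) via Lemma \ref{Lemma 1.2}, obtaining $cn^{-s/l}\le\delta_n\le 2d_n\le cn^{-s/l}$, hence $\delta_n\asymp d_n\asymp n^{-s/l}$. Next, for $v=l/(l-1)$, the key elementary observation is the identity $(ul+s)/l=u+s/l$, so that the lower bound $\delta_n\ge cn^{-s/l}(\ln n)^{u+s/l}$ of Theorem \ref{Theorem 3.6} reads precisely $\delta_n\ge cn^{-s/l}(\ln n)^{(ul+s)/l}$. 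In the subcase $lu/(r+1-\mu)<ul/s+1$ the matching upper bound $d_n\le cn^{-s/l}(\ln n)^{(ul+s)/l}$ is (\ref{(3.24)}), and Lemma \ref{Lemma 1.2} yields $\delta_n\asymp d_n\asymp n^{-s/l}(\ln n)^{(ul+s)/l}$. In the complementary subcase $lu/(r+1-\mu)\ge ul/s+1$ the available upper bound $d_n\le cn^{-s/l}(\ln n)^{us/(r+1-\mu)}$ is (\ref{(3.23)}), which in general strictly exceeds the lower bound; hence no sharpening of the exponent is possible from the present estimates and one records only the two-sided chain $cn^{-s/l}(\ln n)^{(ul+s)/l}\le\delta_n\le 2d_n\le cn^{-s/l}(\ln n)^{us/(r+1-\mu)}$.

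Since every ingredient has already been proved, there is no genuine obstacle; the only point requiring care is the bookkeeping of the logarithmic exponents. In particular one should check that in the subcase $lu/(r+1-\mu)\ge ul/s+1$ the inequality $us/(r+1-\mu)\ge(ul+s)/l$ holds — which, after multiplying through by $l/s$, is exactly the stated hypothesis $lu/(r+1-\mu)\ge ul/s+1$ — so that the displayed chain of inequalities is internally consistent. The statement of the theorem then follows by collecting the three cases.
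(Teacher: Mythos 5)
Your proposal is correct and coincides with the paper's own argument: the paper derives Theorem \ref{Theorem 3.7} precisely by combining the upper bounds of Theorem \ref{Theorem 3.4} with the lower bounds of Theorems \ref{Theorem 3.5} and \ref{Theorem 3.6} via Lemma \ref{Lemma 1.2}. Your additional checks --- the identity $(ul+s)/l=u+s/l$ and the consistency inequality $us/(r+1-\mu)\ge(ul+s)/l$ in the first subcase of $v=l/(l-1)$ --- are exactly the bookkeeping the paper leaves implicit.
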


\begin{theorem} \rm
\label{Theorem 3.8} 
Let  $\Omega = [-1, 1]^l,$ $l \geq 2,$ $v = s/(s-\gamma),$ $v > l/(l-1).$ Then
$
\delta_n(Q^u_{r\gamma}(\Omega, 1)) \geq c n^{-(s-\gamma)/(l-1)} \ln^{u} n.
$
\end{theorem}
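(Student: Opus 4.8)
The plan is to establish the lower bound $\delta_n(Q^u_{r\gamma}(\Omega,1)) \geq c\, n^{-(s-\gamma)/(l-1)}\ln^u n$ by essentially mimicking the construction used in the proof of Theorem~\ref{Theorem 3.2}, which handled the analogous bound $\delta_n(\bar Q^u_{r\gamma}(\Omega,1)) \geq c\, n^{-(s-\gamma)/(l-1)}\ln^{u-1} n$ for the logarithmic class $\bar Q^u_{r\gamma}$. The key difference is that Definition~\ref{Definition 1.7} places the factor $(1+|\ln^u d(t,\Gamma)|)$ already on the derivatives of order $|v|=r+1,\ldots,s$ themselves (with the fractional-power denominator), rather than the shifted placement in Definition~\ref{Definition 1.6}; this is what produces the extra full power of $\ln n$ (so $\ln^u$ rather than $\ln^{u-1}$).

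First I would introduce the boundary layers $\Delta^0 = \{t\in\Omega : 0 \le d(t,\Gamma) \le \rho_0\}$ with $\rho_0 = N^{-v}$, and for $k\ge 1$ define $\rho_k$ implicitly by $h_k^s/\rho_k^{\gamma} = N^{-s}\ln^u N$ with $h_k = \rho_k - \rho_{k-1}$, exactly as in Theorem~\ref{Theorem 3.2} but now with exponent $u$ in place of $u-1$ on the logarithm. The solvability of these equations follows by the same comparison with the model sequence $\rho_k^* = (k/N)^v$: one checks $h_k^{*s}/\rho_k^{*\gamma} \ge c N^{-s}$ and uses monotonicity of $\rho\mapsto(\rho-\rho_{k-1})^s/\rho^\gamma$, which forces $m < N$ for the number of layers and hence the total count $n \asymp N^{v(l-1)}$ of the cells $\Delta^k_{i_1,\ldots,i_l}$ obtained by partitioning each $\Delta^k$ into (near-)cubes of edge length comparable to $h_k$. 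Then, on each cell, I would place a bump function $\varphi^k_{i_1,\ldots,i_l}$ built from the product $\prod_m ((t_m - b^k_{i_m})(b^k_{i_m+1}-t_m))^s$ normalized by $h_k^{(2l-1)s}\rho_k^\gamma$ and multiplied by $\ln^u N$ on the innermost layer (mirroring the $\varphi^0$ and $\varphi^k$ in Theorem~\ref{Theorem 3.2}), with constants $A_k$ chosen so that $|D^s\varphi^k| \le \rho_k^{-\gamma}(1+|\ln^u\rho_k|)$; one then verifies $\varphi^k_{i_1,\ldots,i_l} \in Q^u_{r\gamma}(\Omega,1)$ (the lower-order derivative bounds are automatic since the bumps vanish to order $s$ at cell boundaries and are supported on small cells) and that $\min$-value on each cell is $\ge c N^{-s}\ln^u N$.

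The final step is the standard width argument: for any sign pattern on the cells, the corresponding signed linear combination $\xi(t) = \sum C^k_{i_1,\ldots,i_l}\varphi^k_{i_1,\ldots,i_l}(t)$ with $|C^k_{i_1,\ldots,i_l}|\le 1$ lies in $Q^u_{r\gamma}(\Omega,1)$ and attains absolute value $\ge c N^{-s}\ln^u N$ at the cell centers, so by the argument of \cite{[2],[3],[24]} (and Lemma~\ref{Lemma 1.1} for the Kolmogorov version) $\delta_n(Q^u_{r\gamma}(\Omega,1)) \ge c N^{-s}\ln^u N$. Since $n\asymp N^{v(l-1)}$ gives $N^{-s} = N^{-v(s-\gamma)} \asymp n^{-(s-\gamma)/(l-1)}$ and $\ln N \asymp \ln n$, the claimed bound follows. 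I expect the only genuinely delicate point to be the careful verification that the bumps — in particular the amplified one on $\Delta^0$ — actually satisfy all the derivative inequalities in Definition~\ref{Definition 1.7} for \emph{non-integer} $\gamma$ (so that $v(s-r)=v\gamma + v\zeta$ and the exponent bookkeeping with $\zeta=\lceil\gamma\rceil-\gamma$ lines up), and that the number-of-cells count $n\asymp N^{v(l-1)}$ is unaffected by the subdivision into $M_k$ pieces that was needed for upper bounds but is not needed here; everything else is a routine transcription of the $\bar Q^u_{r\gamma}$ argument with $u-1$ replaced by $u$.
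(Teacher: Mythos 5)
Your proposal is correct and takes essentially the same route as the paper: the paper's entire proof of this theorem is the remark that the construction of Theorem 3.2 carries over verbatim, with the innermost bump $\varphi^0_{i_1,\ldots,i_l}$ rescaled by $\ln^{u}N$ in place of $\ln^{u-1}N$ to exploit the stronger derivative bound permitted by Definition 1.7. Your write-up in fact supplies more detail than the paper does (the adjusted layer equation $h_k^s/\rho_k^{\gamma}=N^{-s}\ln^{u}N$, the explicit choice of $A_k$, and the membership check in $Q^u_{r\gamma}(\Omega,1)$ for non-integer $\gamma$, all of which the paper leaves implicit).
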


\begin{proof}
The proof of the theorem is similar to the proof of Theorem \ref{Theorem 3.2}.
The difference is that we define  the function $\varphi^0_{i_1,\ldots,i_l}(t)$  by
\begin{equation*}
\varphi^0_{i_1,\ldots,i_l}(t)=
\begin{cases}
A_0\frac{((t_1 - b^0_{i_1}) (b^0_{i_1+1} - t_1) \cdots (t_l - b^0_{i_l}) (b^0_{i_l+1} - t_l))^s}{h_0^{(2l-1)s}} N^{v\gamma}
\ln^{u} N ,\, \ t \in \Delta^0_{i_1,\ldots,i_l}, \\
0 ,\,  \, t \in \Omega \backslash \Delta^0_{i_1,\ldots,i_l}.
\end{cases}
\end{equation*}
\end{proof}

\begin{theorem} \rm
\label{Theorem 3.9} 
Let $\Omega = [-1, 1]^l,$ $l \geq 2,$ $v = s/(s-\gamma),$ $v > l/(l-1).$ Then
$
 d_n(Q^u_{r\gamma}(\Omega,1),C) \le c  n^{-(s-\gamma)/(l-1)} \ln^{u} n.
$
\end{theorem}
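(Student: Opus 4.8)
The plan is to mirror the upper–bound construction used in the proof of Theorem~\ref{Theorem 3.2}, but applied to the class $Q^u_{r\gamma}(\Omega,1)$ in the regime $v>l/(l-1)$, exactly as Theorem~\ref{Theorem 3.8} adapts Theorem~\ref{Theorem 3.2} on the lower–bound side. Concretely, I would first decompose $\Omega$ into the boundary layers $\Delta^k$, $k=0,1,\ldots,N-1$, with $d(t,\Gamma)\asymp (k/N)^v$ on $\Delta^k$, and then partition each $\Delta^k$ into cubes and parallelepipeds $\Delta^k_{i_1,\ldots,i_l}$ with edges parallel to the axes and edge lengths comparable to $h_k=((k+1)/N)^v-(k/N)^v$, precisely as in the not-necessarily-continuous spline construction of Theorem~\ref{Theorem 3.1}. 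Since $v>l/(l-1)$, the counting estimates (\ref{(3.4)})--(\ref{(3.5)}) give $n\asymp N^{v(l-1)}$, with the innermost layer $\Delta^0$ contributing the dominant term $\asymp N^{v(l-1)}$ faces; unlike the $v\le l/(l-1)$ case no secondary refinement by the factors $M_k$ is needed here because a single polynomial $P_{s,\ldots,s}$ per cell already suffices.

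Next I would take $f_N$ to be the piecewise polynomial built from $P_{s,\ldots,s}(f,\Delta^k_{i_1,\ldots,i_l})$ on each cell and estimate $\|f-f_N\|$ cell by cell. For $1\le k\le N-1$, using the multivariate Taylor expansion (\ref{(3.6)}) together with Definition~\ref{Definition 1.7} (which on these cells gives $|D^vf|\le (1+|\ln^u d(t,\Gamma)|)/(d(t,\Gamma))^{|v|-r-\zeta}$), one obtains
\[
\|f-f_N\|_{C(\Delta^k_{i_1,\ldots,i_l})}
\le c\,h_k^{\,s-r-\zeta+\cdots}\;\frac{(1+|\ln(k/N)^v|)^{u}}{((k/N)^v)^{\gamma}}
\le \frac{c}{N^s}(\ln N)^{u},
\]
the key arithmetic being $h_k\asymp (k/N)^{v-1}/N$ and $v\gamma=v(s-r)=vs-vr$, so the powers of $k/N$ cancel and the surviving logarithm is $(\ln(N/k))^u\le(\ln N)^u$. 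For $k=0$ I would work in a representative corner cell $\Delta^0_{0,\ldots,0}=[-1,t_1;\ldots;-1,t_1]$ with $t_1=-1+(1/N)^v$; Taylor expansion about the corner $t^0=(-1,\ldots,-1)$ of order $r$ with integral remainder and the bound $|D^vf|\le 1+|\ln^u d(t,\Gamma)|$ for $|v|=r$ (again Definition~\ref{Definition 1.7}, noting $r<|v|$ is not needed at the base point) yields
\[
\|f-f_N\|_{C(\Delta^0_{0,\ldots,0})}\le c\,h_0^{\,r+1-\mu}|\ln^u h_0|\le c\,N^{-v(r+1-\mu)}\ln^u N = c\,N^{-s}\ln^u N,
\]
since $v(r+1-\mu)=v(s-\gamma+\gamma)=vs\cdot(r+1-\mu)/s$; more carefully $\gamma=s-r-1+\mu$ forces $r+1-\mu=s-\gamma$, hence $v(r+1-\mu)=v(s-\gamma)=s$. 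Combining with $n\asymp N^{v(l-1)}$, i.e. $N\asymp n^{1/(v(l-1))}$, and using $v(l-1)=s(l-1)/(s-\gamma)$ so that $s/(v(l-1))=(s-\gamma)/(l-1)$, gives $\|f-f_N\|\le c\,n^{-(s-\gamma)/(l-1)}\ln^u n$.

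Finally I would upgrade $f_N$ to a continuous local spline $f_N^*$ with the same order of error, by repeating verbatim the modification procedure from the proof of Theorem~\ref{Theorem 3.1}: reorganize the partition of the layers $\Delta^{N-2},\Delta^{N-3},\ldots,\Delta^0$ so that the vertex sets of adjacent layers are nested, then redefine the interpolation data $\tilde f$ to agree with the previously constructed polynomial on shared hyperfaces. The error estimates (\ref{(3.20)})--(\ref{(3.21)}) carry over unchanged, and the node count is still $s^l$ per cell, hence $n\asymp N^{v(l-1)}$, giving the claimed bound $d_n(Q^u_{r\gamma}(\Omega,1),C)\le c\,n^{-(s-\gamma)/(l-1)}\ln^u n$ by Definition~\ref{Definition 1.1}. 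The main obstacle I anticipate is purely bookkeeping rather than conceptual: verifying that the extra logarithmic factor is exactly $\ln^u n$ (and not $\ln^{u-1}n$ as in the $\bar Q$–case of Theorem~\ref{Theorem 3.3}) requires tracking that in Definition~\ref{Definition 1.7} the full power $\ln^u$ already appears at derivative order $|v|>r$, whereas in Definition~\ref{Definition 1.6} the interior derivatives carry only $\ln^{u-1}$; this single difference propagates through the $1\le k\le N-1$ estimate above and is the only place where the $Q^u$ and $\bar Q^u$ arguments genuinely diverge.
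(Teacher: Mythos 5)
Your proposal follows essentially the same route as the paper's proof: the same layered decomposition with $n\asymp N^{v(l-1)}$, the same piecewise tensor-product interpolants $P_{s,\ldots,s}(f,\Delta^k_{i_1,\ldots,i_l})$ with cell-by-cell bounds of order $N^{-s}(\ln N)^u$, and the same upgrade to a continuous spline, so the approach and the final bound are correct. One caveat in your corner-cell step: for $Q^u_{r\gamma}$ Definition~\ref{Definition 1.7} bounds the derivatives of order $|v|=r$ by $1$ (the logarithm enters only for $|v|>r$), so the bound you cite would yield only $h_0^{r}=N^{-vr}$, which is not $O(N^{-s})$ since $vr<s$ for non-integer $\gamma$; the estimate $h_0^{r+1-\mu}|\ln^u h_0|$ you actually display comes from the order-$(r+1)$ bound $(1+|\ln^u d(t,\Gamma)|)/(d(t,\Gamma))^{1-\zeta}$ in the integral remainder, which is precisely what the paper uses.
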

\begin{proof}
We decompose the domain $\Omega$ in to subdomains $\Delta^k_{i_1,\dots,i_l}, k=0,1,\dots,N-1,$ following the procedure which was described in 
the proof of Theorem \ref{Theorem 3.3}.

Clearly, the number of $\Delta^k_{i_1,\cdots,i_l}$ is estimated by
\begin{equation}
n\asymp N^{v(l-1)}.
\label{(3.31)}
\end{equation}

The  polynomial
$P_{s,\ldots,s}(f;\Delta^{k}_{i_1,\cdots,i_l})$
interpolates $f$  in $\Delta^{k}_{i_1,\cdots,i_l},$ $k=0,1,\cdots,N-1.$
Hence  the  local spline $f_N$ is composed of the polynomials
$P_{s,\ldots,s}(f;\Delta^k_{i_1,\ldots,i_l}),$ $k=0,1,\ldots,N-1.$

It is easy to see that for $1 \leq k \leq N-1$ the following estimate holds
\begin{equation}
\|f - f_N\|_{C(\Delta^k_{i_1,\ldots,i_l})} \leq cN^{-s}(\ln N)^{u}.
\label{(3.32)}
\end{equation}

Indeed
$\|f - f_N\|_{C(\Delta^k_{i_1,\ldots,i_l})} \leq
ch^s_k \frac{|\ln (\frac kN)^v|^{u}}{((k/N)^v)^\gamma} 
\leq ch^s_k \left(\frac{N}{k}\right)^{v\gamma}(\ln N)^{u} =
\frac{c}{N^s}(\ln N)^{u}.
$

Let $k=0.$ Without loss of generality we demonstrate our computations in \\ 
$\Delta^0_{0,\ldots,0} = [-1,t_1; -1,t_1; \ldots; -1,t_1],$ where
$t_1 = -1 + \left(\frac{1}{N}\right)^v .$
Using Taylor's expansion (\ref{(3.6)}) we obtain
\begin{equation*}
\|f - f_N\|_{C(\Delta^0_{0,\ldots,0})} \leq
c\lambda^l_s E_{r,\ldots,r}(f,\Delta^0_{0,\ldots,0}) \leq
\end{equation*}
\begin{equation*}
\leq c\max\limits_{t \in \Delta^0_{0,\ldots,0}}
\left|\sum\limits_{|k|=r+1}\frac{1}{k!}\int\limits^1_0(1-\tau)^{r}(t_k+1)^k
\frac{(1+|\ln^u d(-1+\tau(t_k+1)), \Gamma)|)}{(d(-1+\tau(t_k+1)), \Gamma))^{1-\zeta}}
d\tau \right|\leq c \frac{\ln^u N}{N^s}.
\end{equation*}

From the previous estimate and the equality (\ref{(3.31)}) one obtains
$
\|f - f_N\|_{C(\Omega)} \leq c N^{-s}\ln^u N
\leq c n^{-(s-\gamma)/(l-1)}\ln^u n.
$

To construct the continuous local spline $f_N^*$ approximating 
$ Q^u_{r\gamma}(\Omega,1)$ for $v>l/(l-1)$ with the error 
$c(\ln^u n) n^{-(s-\gamma)/(l-1)}$,
we employ all above constructions for the continuous local spline $f_N^*$ 
approximating $\bar Q^u_{r\gamma}(\Omega,1)$ when $v \ge l/(l-1),$ cf. the proof of \ref{Theorem 3.3}.

Thus, for $0 \leq k \leq N-1,$ the following estimates hold
\begin{equation}
\|f - f_N^*\|_{C(\Delta^k_{i_1,\ldots,i_l})} \leq c N^{-s}(\ln N)^{u}, k=1,\cdots,N-1,
\label{(3.33)}
\end{equation}
\begin{equation}
\|f - f_N^*\|_{C(\Delta^0_{0,\ldots,0})} \leq c h^{r+1-\mu}_0|\ln^u h_0| \leq c N^{-s}\ln^u N.
\label{(3.34)}
\end{equation}

From the previous estimates and the equality (\ref{(3.31)}) we have$
\|f - f_N^*\|_{C(\Omega)} \leq c N^{-s}\ln^u N
\leq c n^{-(s-\gamma)/(l-1)}\ln^u n.
$

Since the number of the nodes used to construct the continuous local spline $f_N^*$ is
$s^l$ in each $\Delta^k_{i_1,\ldots,i_l},$ $k=0,1,\ldots,N-1,$
we state  the following estimate
$
 d_n(Q^u_{r\gamma}(\Omega,1),C) \le c  n^{-(s-\gamma)/(l-1)} \ln^{u} n.
$
\end{proof}

Combining Theorem \ref{Theorem 3.8} and Theorem \ref{Theorem 3.9} leads us to the following 
\begin{theorem} \rm
\label{Theorem 3.10} 
Let $\Omega = [-1, 1]^l,$ $l \geq 2,$ $u=1,2,\cdots,$ $v = s/(s-\gamma),$ $v > l/(l-1).$ Then
$
\delta_n(Q^u_{r\gamma}(\Omega, 1)) \asymp d_n(Q^u_{r\gamma}(\Omega,1),C)
\asymp  n^{-(s-\gamma)/(l-1)} \ln^{u} n.
$
\end{theorem}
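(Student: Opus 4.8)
The plan is to assemble the two one-sided estimates that have just been proved, bridging them with the general relation between the Babenko and Kolmogorov widths from Lemma \ref{Lemma 1.2}. Concretely, Theorem \ref{Theorem 3.8} supplies the lower bound $\delta_n(Q^u_{r\gamma}(\Omega,1)) \geq c\, n^{-(s-\gamma)/(l-1)} \ln^{u} n$, while Theorem \ref{Theorem 3.9} supplies the matching upper bound $d_n(Q^u_{r\gamma}(\Omega,1),C) \leq c\, n^{-(s-\gamma)/(l-1)} \ln^{u} n$ for the Kolmogorov width. Together with the inequality $\delta_n(X)\le 2d_n(X,B)$ of Lemma \ref{Lemma 1.2}, these two facts already determine all four quantities up to constants.

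First I would record the chain $\delta_n \le 2 d_n \le c\, n^{-(s-\gamma)/(l-1)}\ln^{u} n$, which upgrades the lower bound of Theorem \ref{Theorem 3.8} into the two-sided estimate $\delta_n(Q^u_{r\gamma}(\Omega,1)) \asymp n^{-(s-\gamma)/(l-1)}\ln^{u} n$. Then I would run the reverse inequality $d_n \ge \tfrac12\,\delta_n \ge c\, n^{-(s-\gamma)/(l-1)}\ln^{u} n$, which combines with Theorem \ref{Theorem 3.9} to give $d_n(Q^u_{r\gamma}(\Omega,1),C) \asymp n^{-(s-\gamma)/(l-1)}\ln^{u} n$ as well. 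Alternatively, the lower bound for $d_n$ can be read off directly from the extremal linear combination $\xi(t)$ constructed in the proof of Theorem \ref{Theorem 3.8}, via Lemma \ref{Lemma 1.1}, exactly in the spirit of the Remark following Theorem \ref{Theorem 3.2}; either route works.

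The only point requiring a little care here — the closest thing to an obstacle — is confirming that the exponents appearing in the lower and upper bounds genuinely coincide rather than merely resembling one another. In the proofs of Theorems \ref{Theorem 3.8} and \ref{Theorem 3.9} the number of nodes satisfies $n \asymp N^{v(l-1)}$ in the regime $v > l/(l-1)$, so $N \asymp n^{1/(v(l-1))}$; substituting this into $N^{-s}\ln^{u} N$ and using $v = s/(s-\gamma)$ gives $s/(v(l-1)) = (s-\gamma)/(l-1)$ and $\ln^{u} N \asymp \ln^{u} n$, which is precisely the common order asserted in the theorem. Once this bookkeeping is checked, the statement follows immediately, and no further estimates are needed.
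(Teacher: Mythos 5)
Your proposal is correct and follows exactly the paper's route: the paper introduces Theorem \ref{Theorem 3.10} with the single line ``Combining Theorem \ref{Theorem 3.8} and Theorem \ref{Theorem 3.9} leads us to the following,'' i.e.\ the lower bound on $\delta_n$ from Theorem \ref{Theorem 3.8} and the upper bound on $d_n$ from Theorem \ref{Theorem 3.9} are bridged by Lemma \ref{Lemma 1.2}, just as you do. Your additional check that the exponents match under $n \asymp N^{v(l-1)}$ is sound bookkeeping that the paper leaves implicit.
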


\end{document}